\documentclass[11pt]{article}

\setlength{\textheight}{22cm}
\setlength{\textwidth}{16.0cm}
\setlength{\oddsidemargin}{0.0cm}
\setlength{\evensidemargin}{0.0cm}
\setlength{\topmargin}{-1cm}

\usepackage[english]{babel} 
\usepackage[T1]{fontenc} 
\usepackage{lmodern}

\usepackage{amsmath,amsfonts,amssymb,amsthm} 
\usepackage{cases} 
 \usepackage{listings}
\usepackage{enumitem}

\usepackage{graphicx} 
\usepackage{pdfpages} 
\usepackage{subfig}
\usepackage{overpic}

\usepackage{booktabs} 
\usepackage{tikz}

\usepackage{authblk}

\usepackage[nottoc]{tocbibind}
\usepackage[titletoc,title]{appendix}
\numberwithin{equation}{section}

\usepackage{xcolor} 
\usepackage{colortbl}
\definecolor{blun}{cmyk}{0.8, 0.5, 0, 0.7}

\usepackage[bookmarks,colorlinks,pdfauthor={Cinzia Soresina},linkcolor=blun,citecolor=blun]{hyperref}

\providecommand{\keywords}[1]{\textbf{\textit{Keywords:}} #1}

\usepackage[author={Cinzia}]{pdfcomment}
\definecolor{inchworm}{rgb}{0.7, 0.93, 0.36}

\newcommand{\Lap}{\Delta}
\newcommand{\tr}{\textnormal{tr}}
\newcommand{\N}{\mathbb N}
\newcommand{\cross}{\text{cross}}
\newcommand{\bif}{\text{bif}}

\theoremstyle{plain}
\newtheorem{theorem}{Theorem}[section]
\newtheorem{proposition}[theorem]{Proposition}

\newtheorem{remark}[theorem]{Remark}


\begin{document}

\title{\Large{On the influence of cross-diffusion in pattern formation}}
\author[1,2]{M. Breden}
\author[2]{C. Kuehn}
\author[2]{C. Soresina}
\affil[1]{CMAP, \'Ecole Polytechnique, route de Saclay, 91120 Palaiseau, France}
\affil[2]{Zentrum Mathematik, Technische Universit\"at M\"unchen, Boltzmannstr. 3, 85748 Garching bei M\"unchen, Germany}
\maketitle

\begin{abstract}
In this paper we consider the Shigesada-Kawasaki-Teramoto (SKT) model to account for stable inhomogeneous steady states exhibiting \emph{spatial segregation}, which describe a situation of coexistence of two competing species. We provide a deeper understanding on the conditions required on both the cross-diffusion and the reaction coefficients for non-homogeneous steady states to exist, by combining a detailed linearized analysis with advanced numerical bifurcation methods via the continuation software \texttt{pde2path}. We report some numerical experiments suggesting that, when cross-diffusion is taken into account, there exist positive and stable non-homogeneous steady states outside of the range of parameters for which the coexistence homogeneous steady state is positive. Furthermore, we also analyze the case in which self-diffusion terms are considered.
\end{abstract}

\keywords{bifurcations; cross-diffusion; Turing instability; SKT model; \texttt{pde2path}} 

\textbf{Mathematics Subject Classification (2010) }35Q92, 92D25, 35K59, 65P30





\section{Introduction}\label{sec_int}

Competition is a fundamental aspect in the interaction of populations. Intraspecific competition occurs among individuals of the same species, when the resources (for survival or reproduction) are limited. Interspecific competition happens among individuals of different species and it mainly occurs when the population exploit the same (limited) resources. The outcomes of these types of interaction among ecologically similar species can be the competitive exclusion or the coexistence of the species, through niche differentiation or spatial segregation. In particular, spatial segregation describes a situation where two competing species coexist, but they mainly concentrate in different regions of the habitat: territorial segregation leads to an exclusive exploitation of the resources and it can minimize the encounters, and consequently also the conflicts, between individuals \cite{wilson1975}.

Both intraspecific and interspecific competition have been observed in real populations, often related to territoriality and aggressive behavior. For instance, the exclusion principle explains the dramatic decline of native Eurasian red squirrels on the British Isles and parts of northern Italy caused by the introduction of eastern grey squirrels \cite{wauters2005review}. On the contrary, spatial segregation has been observed in birds, mammals, amphibians, fishes and insects. More concrete examples are wolves and coyotes \cite{benson2013inter}, jaguars and pumas \cite{palomares2016fine}, different species of reed warbler \cite{hoi1991territorial}, lesser kestrels \cite{cecere2018spatial}, gibbons \cite{suwanvecho2012interspecific}, and salamanders \cite{hoffman2003habitat}.

From a modelling point of view, several mathematical models have been proposed to explain these natural phenomena and to predict the outcomes, both for a biological control purpose and for general ecological studies. A simple mathematical model can be formulated in terms of ordinary differential equations of Lotka--Volterra--Gause type. This model is able to predict the mutual exclusion or the coexistence of the species depending on the parameter values, but spatial movements of individuals are here neglected. When spatial variations of the populations are considered, reaction-diffusion equations can be formulated. In this context, spatial segregation is explained through non-homogeneous solutions of the reaction-diffusion model, and then linked to pattern formation. However, typical Lotka--Volterra--Gause reaction-diffusion models with only constant diffusion coefficients fail to produce such patterns, at least on convex domains~\cite{kishimoto1985spatial}. To account for stable inhomogeneous steady states exhibiting spatial segregation, the so called SKT model, which includes non-linear cross-diffusion terms, was proposed in \cite{shigesada1979spatial}. The system is given by
\begin{equation}
\begin{cases}
\partial_t u=\Lap((d_1+d_{11} u +d_{12} v)u)+(r_1-a_1 u-b_1 v)u,& \textnormal{on } \mathbb{R}_+\times \Omega,\\[0.1cm]
\partial_t v=\Lap((d_2+d_{22} v +d_{21} u)v)+(r_2-b_2 u-a_2 v)v,& \textnormal{on }  \mathbb{R}_+\times \Omega,\\[0.1cm]
\dfrac{\partial u}{\partial n}=\dfrac{\partial v}{\partial n}=0,&\textnormal{on } \mathbb{R}_+\times \partial\Omega,\\[0.1cm]
u(0,x)=u_{in}(x),\; v(0,x)=v_{in}(x),& \textnormal{on } \Omega,
\end{cases}\label{cross}
\end{equation}
where the quantities $u(t,x),\; v(t,x)\geq 0$ represent the population densities of two species at time $t$ and position $x$, confined and competing for resources on a bounded and connected domain $\Omega~\subset~\mathbb{R}^N$. The coefficients $d_i,\,r_i,\,a_i,\,b_i \, (i=1,2)$ describe the diffusion, the intrinsic growth, the intra-specific competition and the inter-specific competition rates, while $d_{11},\,d_{22}$ and $d_{12},\,d_{21}$ stand for competition pressure, they are called self- and cross-diffusion coefficients. To prevent any confusion, we will refer to $d_1$ and $d_2$ as the \emph{standard} diffusion coefficients. Throughout this paper we consider the cross-diffusion system \eqref{cross} and assume that the standard diffusion coefficients are positive, and that all the other coefficients are non-negative. 

The homogeneous steady states are the total extinction $(0,0)$, two non-coexistence states $(\bar u,0)~=~(r_1/a_1,0)$ and $(0,\bar v)=(0,r_2/a_2)$, and one coexistence state 
$$(u_*,v_*)=\left( \dfrac{r_1a_2-r_2b_1}{a_1a_2-b_1b_2},\dfrac{r_2a_1-r_1b_2}{a_1a_2-b_1b_2}\right).$$
While the non-coexistence equilibria exist for all the parameter values, the coexistence steady state is admissible (i.e. $u_*>0$ and $v_*>0$) only in two cases:
\begin{itemize}
\item \textit{weak competition} or \textit{strong intra-specific competition}, when 
\begin{equation}\label{weak_comp}
\dfrac{b_1}{a_2}<\dfrac{r_1}{r_2}<\dfrac{a_1}{b_2}.
\end{equation}
In this case, for the homogeneous system (when all diffusion coefficients are taken equal to zero), the coexistence steady state is stable, while the non-coexistence ones are unstable. With only standard diffusion, in a convex domain and with zero-flux boundary conditions, any non-negative solution generically converges to the homogeneous one, and this implies that the two species coexist but their densities are homogeneous in the whole domain \cite{kishimoto1985spatial}. 
\item \textit{strong competition} or \textit{strong inter-specific competition}, when \begin{equation}\label{strong_comp}
\dfrac{a_1}{b_2}<\dfrac{r_1}{r_2}<\dfrac{b_1}{a_2}.
\end{equation}
In this case, for the homogeneous system the coexistence steady state is unstable, while the non-coexistence ones are stable. Adding only standard diffusion, in a convex domain and with zero-flux boundary conditions, it has been shown that if positive and non-constant steady states exist, they must be unstable~\cite{kishimoto1985spatial}, and numerical simulations suggest that any non-negative solution generically converges to either $(\bar u, 0)$ or $(0,\bar v)$, that is, the competitive exclusion principle occurs between the two species.
\end{itemize}

Starting from the seminal paper~\cite{shigesada1979spatial}, the question of the existence of non-homogeneous steady states for~\eqref{cross}, when cross-diffusion terms are taken into account, has been extensively investigated, both numerically and theoretically.
We provide below a brief overview of the main results that are the most relevant for the present work. For a broader discussion we refer the reader the review papers~\cite{IidNimYam18,Jun10} and the references therein.

In the weak competition regime, if one of the cross-diffusion coefficients is sufficiently large compared to all other parameters, then the homogeneous co-existence steady state looses its stability and non-homogeneous steady states appear~\cite{LouNi96}. Besides, the shape and the amplitude of these pattern can be predicted \cite{gambino2012turing}. Still in the weak competition regime, the question of whether non-homogeneous steady states can exist if both cross-diffusion coefficients are large and \emph{qualitatively similar} is also raised in~\cite{LouNi96}. A (partial) negative answer is given in~\cite{CheJun06}, in the particular case where $b_1=b_2=0$, for which entropy methods can be used to prove that all steady states must be homogeneous. Yet, it is known that for other cross-diffusion systems the transition in parameter space between the validity of entropy methods and bifurcation techniques is intricate~\cite{JuengelKuehnTrussardi}. Hence, we are going to quantify the parameter dependence of the bifurcations in the SKT system carefully.

The strong competition case is more complicated, already when only standard diffusion is considered (see again~\cite{LouNi96} and the references therein). Nevertheless, non-homogeneous steady states can also be obtained in that case, and previous results~\cite{LouNi96} seem to indicate that cross-diffusion is even more helpful in obtaining patterns in the strong competition regime, as the conditions on the cross-diffusion coefficients are less restrictive. In particular, non-homogeneous steady states are proven to exist when both cross-diffusion coefficients are large.

In both regimes (weak and strong competition), the qualitative properties of these non-homogeneous steady states have also been investigated. In~\cite{Ni98}, the existence of spike-layer solutions is shown, in the asymptotic limit where one of the cross-diffusion coefficients goes to infinity. In the so-called triangular case, where $d_{21}=0$, intricate bifurcation diagrams of steady states were obtained numerically in \cite{iida2006diffusion,izuhara2008reaction}, and then validated in~\cite{breden2013global,breden2018existence}. \medskip

In this work, we provide a deeper understanding on the conditions required on both the cross-diffusion and the reaction coefficients for non-homogeneous steady states to exist, by combining a detailed linearized analysis with numerical bifurcation methods. More precisely, we analytically predict the existence of \emph{local} branches of non-homogeneous steady states bifurcating from the homogeneous one, and then continue these branches numerically to get a more \emph{global} picture, including secondary bifurcations which can sometimes lead to stability changes. To this end, we used the continuation software for PDEs \texttt{pde2path} \cite{dohnal2014pde2path, uecker2018hopf, uecker2014pde2path}, based on a FEM discretization of the stationary problem.
Here is a summary of the main contributions of this paper.
\begin{itemize}[leftmargin=0.3cm]
\item We recover two conditions on the reaction coefficients that were already noticed in previous studies~\cite{LouNi96} about the existence of non-homogeneous steady states, and explain their role from the linearized analysis, see Propositions \ref{prop:weak} and \ref{prop:strong} (and Figures~\ref{fig:signs_weak} and~\ref{fig:signs_strong}). We point out that a similar study was also done in~\cite{MimKaw80} in the triangular case. 
\item This condition allows us to clearly explain, why cross-diffusion can be ``more helpful'' to obtain non-homogeneous steady states in the strong competition case, and why, when both cross-diffusion coefficients are large, it is harder to get such non-homogeneous steady states in the weak competition regime.
\item Nevertheless, we show in Theorem~\ref{th:large_cd} that such non-homogeneous solutions do actually exist in the weak competition regime when both cross-diffusion coefficients are large and equal, thus answering positively the question raised in~\cite{LouNi96}.
\item We also study how the bifurcation structure is affected, when one of the cross-diffusion parameter is varied, and in particular what becomes of the non-homogeneous steady states obtained in the extensively studied triangular case, when $d_{21}$ is then turned on.
\item Finally, we go beyond the linearized analysis by numerically computing global bifurcation diagrams of steady states in the non-triangular case. Our results highlight, among other things, that the non-homogeneous solutions obtained in the triangular case for the weak competition regime can quickly collapse and disappear when the other cross-diffusion coefficient is turned on. 
\item Our numerical continuation calculations also suggest the existence of stable non-homogeneous steady states outside of the weak~\eqref{weak_comp} or strong~\eqref{strong_comp} competition regimes, which to the best of our knowledge, have never been observed before.
\end{itemize}

\begin{remark}
We make some preliminary simplifications. We are mainly interested in the existence of non-homogeneous steady states when one or both of the cross-diffusion become large compared to the diffusion coefficients. To match with previous studies, we will mostly take $d_1=d_2=d$ as a bifurcation parameter, and study the appearance of these solutions when $d$ decreases (it has been proven~\cite{LouNi96}, that if at least one of the standard diffusion coefficients is large enough compared to the cross-diffusion coefficients, there can not exist any non-homogeneous steady state). To simplify the presentation, we start in Section~\ref{sec:without_selfdiff} by considering the case when there is no self-diffusion (i.e. we take $d_{11}=d_{22}=0$). We then discuss in Section~\ref{sec:with_selfdiff} qualitative changes that are induced by including self-diffusion.
\end{remark}

\medskip

Before going further, let us briefly mention that in order to understand the long time dynamics of~\eqref{cross}, we not only need a good understanding of the steady states (which is the objective we are pursuing in this work), but also a theory of global existence and regularity of the time depending solutions. The first main result in this direction was obtained in~\cite{Ama88,Ama90}, where a general theory about the existence of local solutions for general quasilinear parabolic PDEs is developed. To then get global solutions, one must show that some Sobolev norms remain bounded, which has been the subject of many works, all imposing some restrictions on the coefficients of the system. In particular, the triangular case ($d_{21}=0$) has been thoroughly investigated, mainly because the reduced coupling then allows to get a maximum principle for the second equation. In this setting, the most general result was obtained in~\cite{HaoNguPha15}, where the existence of a unique smooth global solution is established assuming $d_{21}=0$ and $d_{11}>0$. Another breakthrough was made in~\cite{GalGarJun03}, where an entropy structure for~\eqref{cross} was discovered. Entropy-based methods were then generalized, for instance to handle a broader class of cross-diffusion terms (see e.g.~\cite{DevLepMouTre15}), and we refer the reader to the book~\cite{Jun16} for a more complete review on the subject.

\medskip

The paper is organized as follow. In Section \ref{sec:without_selfdiff} we perform a detailed linearized analysis and we detect bifurcations from the homogeneous coexistence steady state. We exploit this result to answer the question raised in~\cite{LouNi96} and to understand what happens when one cross-diffusion coefficient increases. In Section \ref{sec:global} we both illustrate and complement the obtained results by numerically computing bifurcation diagrams of steady states, while 
Section \ref{sec:r1} shows the existence of stable non-homogeneous steady states outside of the range of parameter for which the homogeneous solution is positive.  In Section \ref{sec:with_selfdiff} we also take into account non-zero self-diffusion coefficients and we briefly describe the main changes induced by self-diffusion. Finally, in Section \ref{sec:concl} some concluding remarks and biological comments can be found.


\section{Analysis without self-diffusion}
\label{sec:without_selfdiff}

Both numerically and theoretically, one of the main ways to obtain non-homogeneous steady states for~\eqref{cross} is to study bifurcations from the homogeneous coexistence steady state $(u_*,v_*)$. We mention that a connection between usual Turing instabilities and cross-diffusion induced instabilities was made in~\cite{iida2006diffusion}.

We start by studying ``mode by mode'' the linear stability of $(u_*,v_*)$~\cite{Henry,KuehnBook1}. To do so we will consider the eigenfunctions $\psi_k$ and associated eigenvalues $-\lambda_k$ of the Laplacian with zero Neumann boundary conditions:
\begin{equation*}
\begin{cases}
-\Lap\psi_k=\lambda_k\psi_k,& \textnormal{on }  \Omega,\\[0.1cm]
\dfrac{\partial \psi_k}{\partial n}=0,&\textnormal{on } \partial\Omega,\\[0.1cm]
\end{cases}
\end{equation*}
which satisfy $\lambda_0=0$, $\lambda_k>0$ for all $k\in\N_{\geq 1}$, and $\lambda_k\rightarrow+\infty$ as $k\to+\infty$. In the sequel we always assume that the eigenvalues are labeled in ascending order. Throughout this section, we assume $d_{11}=d_{22}=0$. 

\subsection{Existence of bifurcations}
\label{sec:bif}

The Jacobian matrix of the reaction part and the linearization of the diffusion part of~\eqref{cross}, evaluated at the equilibrium $(u_*,v_*)$, are
$$J_*= \begin{pmatrix}-a_1u_* & -b_1u_*\\-b_2v_*& -a_2v_*\end{pmatrix}, \quad
J_\Delta^*=\begin{pmatrix}d+d_{12}v_*& d_{12}u_*\\d_{21}v_* & d+d_{21}u_*\end{pmatrix}.$$
Then, the characteristic matrix associated to the $k$-th mode, $k\in\N$, is
$$M_k^*=J_*-J_\Delta^*\lambda_k=
\begin{pmatrix}
-a_1u_*-(d+d_{12}v_*)\lambda_k   &  -b_1u_*-d_{12}u_*\lambda_k\\
-b_2v_* -d_{21}v_*\lambda_k         & -a_2v_*-(d+d_{21}u_*)\lambda_k
\end{pmatrix}.$$
and its determinant can be written as a second order polynomial in $d$
\begin{equation}\label{detM*d}
P_k(d):=\det M_k^*=\lambda_k^2 d^2 +
(d_{12}v_*\lambda_k^2+d_{21}u_*\lambda_k^2-\tr J_*\lambda_k)d
-d_{12}\alpha \lambda_k- d_{21}\beta \lambda_k +\det J_*,
\end{equation}
where 
\begin{equation*}
\alpha:=(b_2u_*-a_2v_*)v_*, \qquad \beta:=(b_1v_*-a_1u_*)u_*, \qquad \det J_*=(a_1a_2-b_1b_2)u_*v_*.
\end{equation*}
Note that, since we are not going to varying the inter- and intra-specific competition rates $\;a_i,\; b_i,\; (i=1,2)$ (excluding the possibility to pass from the weak to the strong competition regime or vice versa), the sign of the determinant of $J_*$ is constant. In detail, it is positive in the weak competition case, and negative in the strong competition case. Moreover, the trace of $M_k^*$ is always negative (assuming~\eqref{weak_comp} or~\eqref{strong_comp}), therefore the $k$-th mode is stable if $P_k(d)>0$, unstable if $P_k(d)<0$, and a bifurcation occurs for $P_k(d)=0$. Obviously, no bifurcation can happen for $k=0$, since $P_0$ reduces to $\det J_*$ and does not depend on $d$. Let us introduce
\begin{equation}
\label{ABC}
A_k=\lambda_k^2,\quad B_k=d_{12}v_*\lambda_k^2+d_{21}u_*\lambda_k^2-\tr J_*\lambda_k, \quad C_k=-d_{12}\alpha \lambda_k- d_{21}\beta \lambda_k +\det J_*,
\end{equation}
so that
\begin{equation*}
P_k(d)=A_k d^2 + B_k d + C_k.
\end{equation*}
Obviously $A_k\geq 0$, and we have $\tr J_*<0$ (again assuming~\eqref{weak_comp} or~\eqref{strong_comp}) which implies $B_k>0$. Therefore, a bifurcation associated to the $k$-th mode ($k\geq 1$) can occur if and only if $C_k<0$. The signs of $\alpha$ and $\beta$ are thus crucial, as they change the monotonicity of $C_k$ with respect to $d_{12}$ and $d_{21}$ respectively. These signs depend on the parameter values $r_i,\,a_i,\, b_i,\, (i=1,2)$, and the different possibilities in the weak and strong competition regimes are presented in Figures~\ref{fig:signs_weak} and~\ref{fig:signs_strong}.

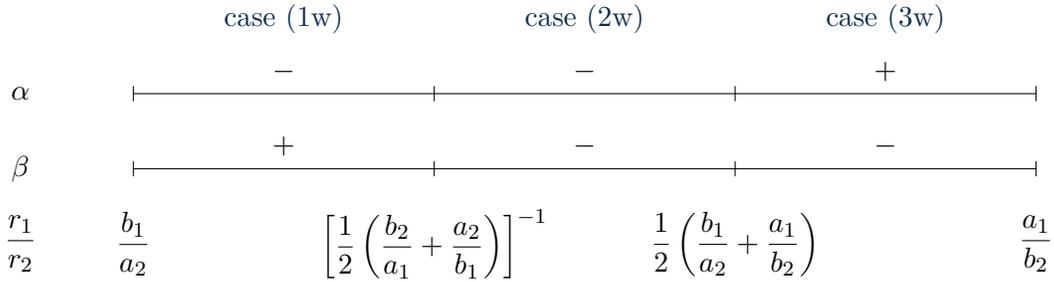
\begin{figure}[h!]
\centering
\begin{tikzpicture}
\draw (0,0) -- (12,0);
\draw (0,1) -- (12,1);
\draw (0,-0.1) -- (0,0.1);
\draw (4,-0.1) -- (4,0.1);
\draw (8,-0.1) -- (8,0.1);
\draw (12,-0.1) -- (12,0.1);
\draw (0,0.9) -- (0,1.1);
\draw (4,0.9) -- (4,1.1);
\draw (8,0.9) -- (8,1.1);
\draw (12,0.9) -- (12,1.1);
\draw (-1.5,1) node{$\alpha$};
\draw (-1.5,0) node{$\beta$};
\draw (-1.5,-1) node{$\dfrac{r_1}{r_2}$};
\draw (0,-1) node{$\dfrac{b_1}{a_2}$};
\draw (4,-1) node{$\left[\dfrac 1 2 \left(\dfrac{b_2}{a_1}+\dfrac{a_2}{b_1}\right)\right]^{-1}$};
\draw (8,-1) node{$\dfrac 1 2 \left(\dfrac{b_1}{a_2}+\dfrac{a_1}{b_2}\right)$};
\draw (12,-1) node{$\dfrac{a_1}{b_2}$};
\draw (2,0.3) node{$+$};
\draw (6,0.3) node{$-$};
\draw (10,0.3) node{$-$};
\draw (2,1.3) node{$-$};
\draw (6,1.3) node{$-$};
\draw (10,1.3) node{$+$};
\draw (2,2) node{\ref{w1}};
\draw (6,2) node{\ref{w2}};
\draw (10,2) node{\ref{w3}};
\end{tikzpicture}
\caption{Sign of the quantities $\alpha$ and $\beta$ in the weak competition regime~\eqref{weak_comp}, depending on the value of $r_1/r_2$.}\label{fig:signs_weak}
\end{figure}

\begin{figure}[h!]
\centering
\begin{tikzpicture}
\draw (0,0) -- (12,0);
\draw (0,1) -- (12,1);
\draw (0,-0.1) -- (0,0.1);
\draw (4,-0.1) -- (4,0.1);
\draw (8,-0.1) -- (8,0.1);
\draw (12,-0.1) -- (12,0.1);
\draw (0,0.9) -- (0,1.1);
\draw (4,0.9) -- (4,1.1);
\draw (8,0.9) -- (8,1.1);
\draw (12,0.9) -- (12,1.1);
\draw (-1.5,1) node{$\alpha$};
\draw (-1.5,0) node{$\beta$};
\draw (-1.5,-1) node{$\dfrac{r_1}{r_2}$};
\draw (0,-1) node{$\dfrac{a_1}{b_2}$};
\draw (4,-1) node{$\left[\dfrac 1 2 \left(\dfrac{b_2}{a_1}+\dfrac{a_2}{b_1}\right)\right]^{-1}$};
\draw (8,-1) node{$\dfrac 1 2 \left(\dfrac{b_1}{a_2}+\dfrac{a_1}{b_2}\right)$};
\draw (12,-1) node{$\dfrac{b_1}{a_2}$};
\draw (2,0.3) node{$-$};
\draw (6,0.3) node{$+$};
\draw (10,0.3) node{$+$};
\draw (2,1.3) node{$+$};
\draw (6,1.3) node{$+$};
\draw (10,1.3) node{$-$};
\draw (2,2) node{\ref{s1}};
\draw (6,2) node{\ref{s2}};
\draw (10,2) node{\ref{s3}};
\end{tikzpicture}
\caption{Sign of the quantities $\alpha$ and $\beta$ in the strong competition case~\eqref{strong_comp}, depending on the value of $r_1/r_2$.}\label{fig:signs_strong}
\end{figure}
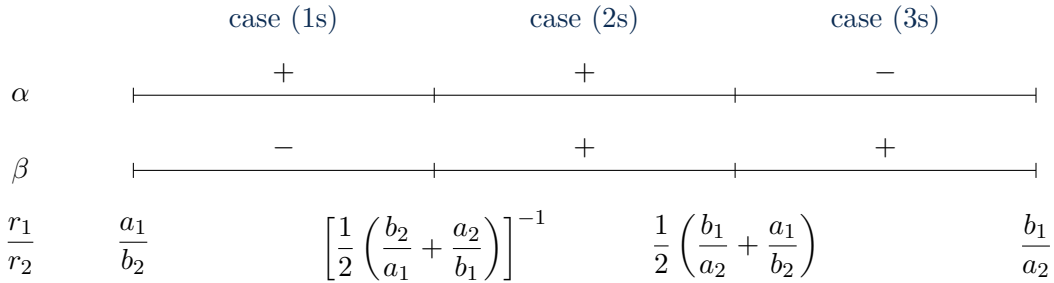

There we see why the weak competition regime~\eqref{weak_comp} and the strong competition regime~\eqref{strong_comp} can have different behaviors. Indeed, in the weak competition regime, $\alpha$ and $\beta$ cannot both be positive at the same time, which means that when both $d_{12}$ and $d_{21}$ are large, at most one of the terms $-d_{12}\alpha \lambda_k$ and $- d_{21}\beta \lambda_k$ helps to get a negative $C_k$, while the other one does not. Worse, there are cases when both $\alpha$ and $\beta$ are negative, which means that no amount of cross-diffusion will be able to produce bifurcations from $(u_*,v_*)$. On the other hand, in the strong competition regime there is always at least one of the terms between $-d_{12}\alpha \lambda_k$ and $- d_{21}\beta \lambda_k$ that helps to get a negative $C_k$. Besides, there are cases where both do help (when $\alpha$ and $\beta$ are positive), and in this regime it is easy to see that bifurcations from $(u_*,v_*)$ can occur when both $d_{12}$ and $d_{21}$ are large. These observations can be made precise as follows:

\begin{proposition}\label{prop:weak}
Consider $r_i,\;a_i,\;b_i \; (i=1,2)$ satisfying the weak competition hypothesis~\eqref{weak_comp}. 
\begin{enumerate}[wide, labelwidth=!, labelindent=0pt]
\renewcommand{\theenumi}{case (\arabic{enumi}w)}
\renewcommand{\labelenumi}{Case (\arabic{enumi}w):}
\item\label{w1} If 
\begin{equation*}
\dfrac{b_1}{a_2}<\dfrac{r_1}{r_2}<\left[\dfrac 1 2 \left(\dfrac{b_2}{a_1}+\dfrac{a_2}{b_1}\right)\right]^{-1},
\end{equation*}
then, for all $k\in\N_{\geq 1}$, there exists $d>0$ such that $P_k(d)=0$ if and only if
\begin{equation*}
0\leq d_{12} < \dfrac{1}{|\alpha|}\left(\beta d_{21}-\dfrac{\det J_*}{\lambda_k}\right).
\end{equation*}
\item\label{w2} If
\begin{equation*}
\left[\dfrac 1 2 \left(\dfrac{b_2}{a_1}+\dfrac{a_2}{b_1}\right)\right]^{-1} \leq \dfrac{r_1}{r_2} \leq \dfrac{1}{2} \left(\dfrac{b_1}{a_2}+\dfrac{a_1}{b_2}\right),
\end{equation*}
then, for all $k\in\N_{\geq 1}$ and all $d>0$, $P_k(d)>0$.
\item\label{w3} If
\begin{equation*}
\dfrac{1}{2} \left(\dfrac{b_1}{a_2}+\dfrac{a_1}{b_2}\right)<\dfrac{r_1}{r_2}<\dfrac{a_1}{b_2},
\end{equation*}
then, for all $k\in\N_{\geq 1}$, there exists $d>0$ such that $P_k(d)=0$ if and only if
\begin{equation*}
0\leq d_{21} < \dfrac{1}{|\beta|}\left(\alpha d_{12}-\dfrac{\det J_*}{\lambda_k}\right).
\end{equation*}
\end{enumerate}
\end{proposition}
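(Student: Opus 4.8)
The plan is to exploit the reduction already established just before the statement. Since $A_k=\lambda_k^2>0$ and, because $\tr J_*<0$ under~\eqref{weak_comp}, $B_k>0$ for every $k\in\N_{\geq 1}$, the map $d\mapsto P_k(d)$ has derivative $P_k'(d)=2A_kd+B_k>0$ on $[0,+\infty)$ and is therefore strictly increasing there, with $P_k(0)=C_k$. Consequently there exists $d>0$ with $P_k(d)=0$ if and only if $C_k<0$, while $P_k(d)>0$ for all $d>0$ if and only if $C_k\geq 0$. So the whole proposition reduces to reading off, in each sub-case, the sign of
\[
C_k = -d_{12}\alpha\lambda_k - d_{21}\beta\lambda_k + \det J_*
\]
as a function of $d_{12},d_{21}\geq 0$, using that $\det J_*>0$ in the weak competition regime and that $\lambda_k>0$; this in turn only needs the signs of $\alpha$ and $\beta$.

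To obtain those signs I would use that $u_*,v_*>0$ under~\eqref{weak_comp}, so $\operatorname{sign}\alpha=\operatorname{sign}(b_2u_*-a_2v_*)$ and $\operatorname{sign}\beta=\operatorname{sign}(b_1v_*-a_1u_*)$. Substituting the explicit formulas for $u_*$ and $v_*$ and clearing the common positive denominator $a_1a_2-b_1b_2$, an elementary computation gives
\[
\alpha>0 \iff \frac{r_1}{r_2} > \tfrac12\!\left(\frac{b_1}{a_2}+\frac{a_1}{b_2}\right), \qquad \beta>0 \iff \frac{r_1}{r_2} < \left[\tfrac12\!\left(\frac{b_2}{a_1}+\frac{a_2}{b_1}\right)\right]^{-1},
\]
i.e. precisely the thresholds appearing in Figure~\ref{fig:signs_weak}. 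The key structural observation is that these two thresholds are exactly the arithmetic and the harmonic mean of $b_1/a_2$ and $a_1/b_2$; since~\eqref{weak_comp} forces $b_1/a_2<a_1/b_2$, the AM--HM inequality yields the chain $\tfrac{b_1}{a_2}<\mathrm{HM}<\mathrm{AM}<\tfrac{a_1}{b_2}$, which is the ordering drawn in Figure~\ref{fig:signs_weak} and produces the three sub-cases with sign patterns $(\alpha,\beta)=(-,+)$ in~\ref{w1}, $(\alpha,\beta)=(-,-)$ (one of them vanishing at an endpoint of the $r_1/r_2$-interval) in~\ref{w2}, and $(\alpha,\beta)=(+,-)$ in~\ref{w3}.

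With the signs in hand the conclusion follows case by case. In Case~\ref{w1} one has $C_k=|\alpha|\lambda_kd_{12}-\beta\lambda_kd_{21}+\det J_*$, so $C_k<0\iff d_{12}<|\alpha|^{-1}(\beta d_{21}-\det J_*/\lambda_k)$, which together with $d_{12}\geq 0$ is the asserted inequality; Case~\ref{w3} is symmetric, solving $C_k<0$ for $d_{21}$ instead. In Case~\ref{w2}, $\alpha\leq 0$ and $\beta\leq 0$ give $C_k=|\alpha|\lambda_kd_{12}+|\beta|\lambda_kd_{21}+\det J_*\geq\det J_*>0$ for every $d_{12},d_{21}\geq 0$, hence $P_k(d)>0$ for all $d>0$. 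The only step that is more than bookkeeping is the sign analysis of $\alpha$ and $\beta$ — in particular recognizing the two thresholds as the harmonic and arithmetic means of $b_1/a_2$ and $a_1/b_2$, so that their position relative to the endpoints of~\eqref{weak_comp} is settled by AM--HM; everything else is the monotonicity of $P_k$ on $[0,+\infty)$ together with routine algebra.
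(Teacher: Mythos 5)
Your proof is correct and follows essentially the same route as the paper: reduce to the sign of $C_k=P_k(0)$ via the positivity of $A_k$ and $B_k$, then translate $C_k<0$ according to the signs of $\alpha$ and $\beta$, which are exactly those recorded in Figure~\ref{fig:signs_weak}. Your identification of the two thresholds as the harmonic and arithmetic means of $b_1/a_2$ and $a_1/b_2$, with the AM--HM inequality fixing their ordering inside the interval~\eqref{weak_comp}, is a clean way to justify the figure that the paper states without proof.
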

\begin{proof}
We just rewrote the condition $C_k<0$ according to the signs of $\alpha$ and $\beta$ in the different cases, and used for~\ref{w2} that, in the weak competition regime~\eqref{weak_comp}, $\det J_*>0$.
\end{proof}
\begin{proposition}\label{prop:strong}
Consider $r_i,\;a_i,\;b_i \; (i=1,2)$ satisfying the strong competition hypothesis~\eqref{strong_comp}. 
\begin{enumerate}[wide, labelwidth=!, labelindent=0pt]
\renewcommand{\theenumi}{case (\arabic{enumi}s)}
\renewcommand{\labelenumi}{Case (\arabic{enumi}s):}
\item\label{s1} If 
\begin{equation*}
\dfrac{a_1}{b_2}<\dfrac{r_1}{r_2}<\left[\dfrac 1 2 \left(\dfrac{b_2}{a_1}+\dfrac{a_2}{b_1}\right)\right]^{-1},
\end{equation*}
then, for all $k\in\N_{\geq 1}$, there exists $d>0$ such that $P_k(d)=0$ if and only if
\begin{equation*}
0\leq d_{21} < \dfrac{1}{|\beta|}\left(\alpha d_{12}-\dfrac{\det J_*}{\lambda_k}\right).
\end{equation*}
\item\label{s2} If
\begin{equation*}
\left[\dfrac 1 2 \left(\dfrac{b_2}{a_1}+\dfrac{a_2}{b_1}\right)\right]^{-1} \leq \dfrac{r_1}{r_2} \leq \dfrac{1}{2} \left(\dfrac{b_1}{a_2}+\dfrac{a_1}{b_2}\right),
\end{equation*}
then, for all $k\in\N_{\geq 1}$, there exists $d>0$ such that $P_k(d)=0$.
\item\label{s3} If
\begin{equation*}
\dfrac{1}{2} \left(\dfrac{b_1}{a_2}+\dfrac{a_1}{b_2}\right)<\dfrac{r_1}{r_2}<\dfrac{b_1}{a_2},
\end{equation*}
then, for all $k\in\N_{\geq 1}$, there exists $d>0$ such that $P_k(d)=0$ if and only if
\begin{equation*}
0\leq d_{12} < \dfrac{1}{|\alpha|}\left(\beta d_{21}-\dfrac{\det J_*}{\lambda_k}\right).
\end{equation*}
\end{enumerate}
\end{proposition}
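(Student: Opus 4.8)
The plan is to argue exactly as for Proposition~\ref{prop:weak}, the only structural difference being that in the strong competition regime~\eqref{strong_comp} one has $\tr J_* < 0$ (so that $B_k > 0$) but now $\det J_* = (a_1a_2 - b_1b_2)u_*v_* < 0$. Fix $k \in \N_{\ge 1}$. Since $A_k = \lambda_k^2 > 0$ and $B_k > 0$, the parabola $d \mapsto P_k(d)$ has its vertex at $d = -B_k/(2A_k) < 0$ and is therefore strictly increasing on $[0,+\infty)$; hence there exists $d > 0$ with $P_k(d) = 0$ if and only if $P_k(0) = C_k < 0$, that is, recalling~\eqref{ABC}, if and only if
\begin{equation*}
\alpha\, d_{12} + \beta\, d_{21} > \frac{\det J_*}{\lambda_k}.
\end{equation*}

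It then remains to rewrite this inequality according to the signs of $\alpha$ and $\beta$ provided by Figure~\ref{fig:signs_strong}. In \ref{s1} one has $\alpha > 0$ and $\beta = -|\beta| < 0$, so the inequality is equivalent to $|\beta|\,d_{21} < \alpha\, d_{12} - \det J_*/\lambda_k$, i.e. (dividing by $|\beta|$) to the announced bound $d_{21} < \tfrac{1}{|\beta|}\bigl(\alpha d_{12} - \det J_*/\lambda_k\bigr)$, whose right-hand side is positive since $\det J_* < 0$. In \ref{s3} one has instead $\alpha = -|\alpha| < 0$ and $\beta > 0$, and the symmetric manipulation gives $d_{12} < \tfrac{1}{|\alpha|}\bigl(\beta d_{21} - \det J_*/\lambda_k\bigr)$. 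In \ref{s2} both $\alpha$ and $\beta$ are nonnegative, so $\alpha d_{12} + \beta d_{21} \ge 0 > \det J_*/\lambda_k$ for every admissible pair $(d_{12},d_{21})$; hence $C_k < 0$ always holds and a value $d > 0$ with $P_k(d) = 0$ always exists --- which is precisely where the sign change of $\det J_*$ makes this regime differ from~\ref{w2}.

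The only step that is not a one-line rewriting is the sign table of Figure~\ref{fig:signs_strong} itself, which I would justify by substituting the explicit expressions of $u_*$ and $v_*$ into $\alpha = (b_2u_* - a_2v_*)v_*$ and $\beta = (b_1v_* - a_1u_*)u_*$: one obtains that $\alpha$ has the sign of $\bigl(2a_2b_2 r_1 - (a_1a_2+b_1b_2)r_2\bigr)/(a_1a_2-b_1b_2)$ and $\beta$ the sign of $\bigl(2a_1b_1 r_2 - (a_1a_2+b_1b_2)r_1\bigr)/(a_1a_2-b_1b_2)$, and since $a_1a_2-b_1b_2 < 0$ in the strong competition regime, the sign changes occur exactly at $r_1/r_2 = \tfrac12(b_1/a_2 + a_1/b_2)$ and at $r_1/r_2 = [\tfrac12(b_2/a_1 + a_2/b_1)]^{-1}$; intersecting with~\eqref{strong_comp} yields the three intervals \ref{s1}--\ref{s3}. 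I do not anticipate any genuine obstacle here: the proposition is essentially a bookkeeping translation of the condition $C_k < 0$, and the only thing requiring attention is that $\det J_*$ (but not $\tr J_*$) flips sign between the weak and strong competition regimes, which is exactly why \ref{s2} behaves differently from~\ref{w2}.
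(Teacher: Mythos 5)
Your proposal is correct and follows essentially the same route as the paper: the positivity of $A_k$ and $B_k$ reduces the existence of a positive root of $P_k$ to the condition $C_k<0$, which is then rewritten according to the signs of $\alpha$ and $\beta$ from Figure~\ref{fig:signs_strong}, with $\det J_*<0$ being the key fact that makes \ref{s2} unconditional. Your explicit verification of the sign table and of the positivity of the right-hand side in \ref{s1} is a welcome (and correct) elaboration of details the paper leaves to the reader.
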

\begin{proof}
We just rewrote the condition $C_k<0$ according to the signs of $\alpha$ and $\beta$ in the different cases, and used for~\ref{s2} that, in the strong competition regime~\eqref{strong_comp}, $\det J_*<0$.
\end{proof}

We now go one step further, and use the above analysis of the linearized system to answer the question raised in~\cite{LouNi96} of whether non-homogeneous steady states can exist if both cross-diffusion coefficients are large and qualitatively similar, in the weak competition regime. We point out that, in the strong competition regime, the answer is obviously yes, at least in~\ref{s2}. In the weak competition regime, our next results show that, assuming $d_{12}=d_{21}=d_\cross$, there exists parameter values in~\ref{w1} and \ref{w3} for which, when $d_\cross$ is large enough, bifurcations of non-homogeneous steady state occur. 

\begin{theorem}\label{th:large_cd}
Consider $a_i,\;b_i,\; (i=1,2)$ such that 
\begin{equation*}
b_1b_2<a_1a_2 \qquad \text{and}\qquad 4a_1a_2<(b_1+b_2)^2,
\end{equation*}
and $r_i \; (i=1,2)$ such that 
\begin{equation*}
\frac{r_1}{r_2}=\frac{(b_1+b_2)(3a_1 a_2+b_1 b_2)}{2a_2(a_1 a_2+b_1 b_2+2b_2^2)},
\end{equation*}
 Then~\eqref{weak_comp} holds (i.e. the coexistence steady state $(u_*,v_*)$ is admissible) and $\alpha+\beta>0$, hence we must be in \ref{w1} or \ref{w3}. Besides, assuming $d_{12}=d_{21}=d_\cross$ we have that for all $k\in\N_{\geq 1}$, there exists $d>0$ such that $P_k(d)=0$ if and only if
\begin{equation*}
d_\cross > \frac{\det J_*}{(\alpha+\beta)\lambda_k}.
\end{equation*}
\end{theorem}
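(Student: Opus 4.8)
The plan is to derive the statement from the general criterion already established above: for $k\in\N_{\geq 1}$ the polynomial $P_k$ has a zero $d>0$ if and only if $C_k<0$ (indeed, since $A_k=\lambda_k^2>0$ and $B_k>0$, the parabola $P_k$ is strictly increasing on $[0,\infty)$ from $P_k(0)=C_k$ to $+\infty$, so it has a positive zero exactly when $C_k<0$). Setting $d_{12}=d_{21}=d_\cross$ in~\eqref{ABC} collapses $C_k$ to $C_k=\det J_*-(\alpha+\beta)\lambda_k d_\cross$. Hence, once we know that~\eqref{weak_comp} holds — which gives $\det J_*=(a_1a_2-b_1b_2)u_*v_*>0$ — and that $\alpha+\beta>0$, the equivalence $C_k<0\Leftrightarrow d_\cross>\det J_*/((\alpha+\beta)\lambda_k)$ is immediate for each $k\geq 1$, and the theorem follows. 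It remains to verify these two facts for the prescribed reaction ratio, and to note that $\alpha+\beta>0$ rules out case~\ref{w2} (where both $\alpha$ and $\beta$ are negative, see Figure~\ref{fig:signs_weak}), leaving~\ref{w1} or~\ref{w3}.

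I would start by computing $(u_*,v_*)$. Writing $\rho:=r_1/r_2$, one has $u_*=\tfrac{r_2(\rho a_2-b_1)}{a_1a_2-b_1b_2}$ and $v_*=\tfrac{r_2(a_1-\rho b_2)}{a_1a_2-b_1b_2}$, and I expect that substituting the given value of $\rho$ and simplifying produces the factorizations
\[
\rho a_2-b_1=\frac{(a_1a_2-b_1b_2)(b_1+3b_2)}{2(a_1a_2+b_1b_2+2b_2^2)},
\qquad
a_1-\rho b_2=\frac{(a_1a_2-b_1b_2)(2a_1a_2+b_1b_2+b_2^2)}{2a_2(a_1a_2+b_1b_2+2b_2^2)}.
\]
Because $a_1a_2-b_1b_2>0$ by hypothesis while all the other factors above are nonnegative (and $b_1+3b_2>0$, since $(b_1+b_2)^2>4a_1a_2>0$ forces $b_1+b_2>0$), both right-hand sides are positive. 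This gives at once $b_1/a_2<\rho<a_1/b_2$, i.e.~\eqref{weak_comp}, together with $u_*,v_*>0$; and it also yields the compact expressions $u_*=c(b_1+3b_2)$, $v_*=c(2a_1a_2+b_1b_2+b_2^2)/a_2$ with $c:=\tfrac{r_2}{2(a_1a_2+b_1b_2+2b_2^2)}>0$.

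Next I would compute the sign of $\alpha+\beta=(b_1+b_2)u_*v_*-a_1u_*^2-a_2v_*^2$. Abbreviating $P:=b_1+3b_2$ and $Q:=2a_1a_2+b_1b_2+b_2^2$, inserting the above expressions gives $\alpha+\beta=\tfrac{c^2}{a_2}\bigl((b_1+b_2)PQ-a_1a_2P^2-Q^2\bigr)$, so I must show $g:=(b_1+b_2)PQ-a_1a_2P^2-Q^2>0$. Viewing $q(y):=-y^2+(b_1+b_2)Py-a_1a_2P^2$ as a downward parabola in $y$ with discriminant $P^2\Delta$, where $\Delta:=(b_1+b_2)^2-4a_1a_2>0$ by the second hypothesis, the inequality $g=q(Q)>0$ is equivalent to $\bigl(2Q-(b_1+b_2)P\bigr)^2<P^2\Delta$. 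The decisive computation is the identity $2Q-(b_1+b_2)P=4a_1a_2-(b_1+b_2)^2=-\Delta$, which reduces the claim to $\Delta^2<P^2\Delta$, i.e.\ $\Delta<P^2$; and this holds since $\Delta=(b_1+b_2)^2-4a_1a_2<(b_1+b_2)^2\leq(b_1+3b_2)^2=P^2$, using $a_1a_2>0$ and $b_2\geq 0$. Hence $\alpha+\beta>0$, completing the argument.

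The main obstacle is purely computational: guessing and checking the factorizations of $\rho a_2-b_1$ and $a_1-\rho b_2$, and spotting the identity $2Q-(b_1+b_2)P=-\Delta$. There is no analytic subtlety — the theorem is simply the sign criterion $C_k<0$ attached to~\eqref{detM*d}, specialized to the one-parameter family $d_{12}=d_{21}=d_\cross$ at a reaction ratio chosen so that $(u_*,v_*)$ lands in the (indefinite) cone where $a_1u_*^2-(b_1+b_2)u_*v_*+a_2v_*^2<0$, a cone that is nonempty precisely because $4a_1a_2<(b_1+b_2)^2$. One should also keep track of the positivity of the auxiliary factors ($a_1a_2+b_1b_2+2b_2^2>0$, $2a_1a_2+b_1b_2+b_2^2>0$, $b_1+3b_2>0$) so that all the divisions above are legitimate.
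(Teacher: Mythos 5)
Your proof is correct, and every identity you assert checks out: I verified $\rho a_2-b_1=\tfrac{(a_1a_2-b_1b_2)(b_1+3b_2)}{2(a_1a_2+b_1b_2+2b_2^2)}$, $a_1-\rho b_2=\tfrac{(a_1a_2-b_1b_2)(2a_1a_2+b_1b_2+b_2^2)}{2a_2(a_1a_2+b_1b_2+2b_2^2)}$, and the decisive cancellation $2Q-(b_1+b_2)P=4a_1a_2-(b_1+b_2)^2=-\Delta$. The reduction of the existence of a positive root of $P_k$ to $C_k<0$, and the final equivalence with $d_\cross>\det J_*/((\alpha+\beta)\lambda_k)$, are exactly as in the paper. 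Where you genuinely diverge is in establishing $\alpha+\beta>0$: the paper writes $\alpha+\beta=\bigl(\tfrac{r_2}{a_1a_2-b_1b_2}\bigr)^2Q(r)$ with $Q(r)$ a concave quadratic in the ratio $r=r_1/r_2$, observes that the prescribed ratio is precisely the vertex of that parabola, and evaluates there to get $\tfrac{(a_1a_2-b_1b_2)^2}{4a_2(a_1a_2+b_1b_2+2b_2^2)}\bigl((b_1+b_2)^2-4a_1a_2\bigr)$; you instead substitute the explicitly factored steady state into the quadratic form $(b_1+b_2)u_*v_*-a_1u_*^2-a_2v_*^2$ and run a discriminant argument in your auxiliary variable $y$ evaluated at $y=2a_1a_2+b_1b_2+b_2^2$. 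The two computations are algebraically equivalent --- both ultimately reduce to the sign of $(b_1+b_2)^2-4a_1a_2$ --- but each buys something: the paper's vertex observation explains where the otherwise mysterious formula for $r_1/r_2$ comes from (it maximizes $\alpha+\beta$ over the ratio), while your route produces the explicit positive factorizations of $r_1a_2-r_2b_1$ and $r_2a_1-r_1b_2$, turning the claim that~\eqref{weak_comp} holds (dismissed as ``straightforward to check'' in the paper) into a fully explicit verification, and it makes transparent the geometric picture that the hypothesis $4a_1a_2<(b_1+b_2)^2$ is exactly what makes the cone $\{a_1u^2-(b_1+b_2)uv+a_2v^2<0\}$ nonempty.
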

\begin{proof}
It is straightforward to check that, with the given definition of $r_1/r_2$ and the assumption $b_1b_2<a_1a_2$, \eqref{weak_comp} holds. The main task it to prove that $\alpha+\beta>0$. If that is the case, then $C_k$ will be negative for $d_{12}=d_{21}=d_{\cross}$ large enough. 

We start by rewriting $\alpha+\beta>0$ so that its sign depends only on the sign of a quadratic polynomial in $r:=r_1/r_2$. In order to shorten some of the formula, we introduce $a=a_1a_2$, $b=b_1b_2$. 
We have
\begin{align*}
\alpha+\beta &= 2(b_1+b_2)u_*v_*-(r_1u_*+r_2v_*) \\
&=2(b_1+b_2)\frac{r_1a_2-r_2b_1}{a-b}\frac{r_2a_1-r_1b_2}{a-b}-\left(r_1\frac{r_1a_2-r_2b_1}{a-b}+r_2\frac{r_2a_1-r_1b_2}{a-b}\right) \\
&=\left(\frac{r_2}{a-b}\right)^2 \left(2(b_1+b_2)(a_2 r-b_1)(a_1-b_2 r)-(a-b)\left(r(a_2 r-b_1)+(a_1-b_2 r)\right)\right) \\
&=\left(\frac{r_2}{a-b}\right)^2 Q(r),
\end{align*}
where
\begin{equation*}
Q(r) = -a_2(a+b+2b_2^2)r^2 + (b_1+b_2)(3a+b)r - a_1(a+b+2b_1^2).
\end{equation*}
Notice that $Q$ reaches its maximum at $$\frac{(b_1+b_2)(3a+b)}{2a_2(a+b+2b_2^2)},$$ which explains why we defined $r_1/r_2$ this way. We then compute
\begin{align*}
Q\left(\frac{r_1}{r_2}\right) &= \frac{1}{4a_2(a+b+2b_2^2)}\left((b_1+b_2)^2(3a+b)^2-4a(a+b+2b_2^2)(a+b+2b_1^2)\right) \\
&= \frac{1}{4a_2(a+b+2b_2^2)}\left((b_1^2+b_2^2)(a-b)^2-4a^3+10a^2b-8ab^2+2b^3\right) \\
&= \frac{1}{4a_2(a+b+2b_2^2)}\left((b_1^2+b_2^2)(a-b)^2+(-4a+2b)(a-b)^2\right) \\
&= \frac{(a-b)^2}{4a_2(a+b+2b_2^2)}\left((b_1+b_2)^2-4a_1a_2\right),
\end{align*}
which is positive by assumption, hence $\alpha+\beta>0$. From Figure~\ref{fig:signs_weak} we see we must be in \ref{w1} or {\color{blue}\ref{w3}} (one can in fact check that, if $b_1<b_2$ we are in \ref{w1}, and that if $b_1>b_2$ we are in \ref{w3}). Finally, with $d_{12}=d_{21}=d_\cross$ we see that having $C_k$ negative is equivalent to having
\begin{equation*}
d_\cross > \frac{\det J_*}{(\alpha+\beta)\lambda_k}. \qedhere
\end{equation*}
\end{proof}

We point out that most of the parameter sets that have been studied in the literature (see Table~\ref{tab:param} below) lead to $\alpha+\beta<0$, in which case the homogeneous steady state must remain stable for $d_{12}=d_{21}=d_{\cross}$ large enough. In Section~\ref{sec:global}, we present a global bifurcation diagram and non-homogeneous stationary solutions (including a stable one) obtained with a set of parameters satisfying the hypothesis of Theorem~\ref{th:large_cd}. 

\subsection{Varying one cross-diffusion parameter}
\label{sec:full}

The linearized study in the previous subsection allows us to understand how the the occurrence of bifurcations from the coexistence steady state $(u_*,v_*)$ is influenced by the variation of the cross-diffusion coefficients $d_{12}$ and $d_{21}$. 

In this subsection, we want to address the following question: given parameters values $r_i,\,a_i,\,b_i \, (i=1,2)$ and $d_{12},\,d_{21}$ for which bifurcations can occur, what happens if $d_{21}$ increases (of course a ``symmetric'' question can be asked about $d_{12}$)? 

This question is motivated from two different directions. First from fact that, as we already mentioned, the SKT system and in particular its steady states have been extensively studied in the triangular case $d_{21}=0$ (see e.g.~\cite{breden2018existence}). It is then natural to wonder what happens to these steady states in the more general case where the second cross-diffusion term is also taken into account. Second, the asymptotic regime where one of the cross-diffusion coefficients goes to infinity has also been studied, but mainly from the viewpoint of elliptic theory (see e.g.~\cite{LouNi99,LouNiYot04}), and we believe that bifurcation analysis and complementary numerical continuation analysis can shed a new light on this question.

\medskip

Let us consider values $r_i,\,a_i,\,b_i \, (i=1,2)$ and $d_{12},\,d_{21}$ for which, for all $k\in\N_{\geq 1}$, there exists $d>0$ such that $P_k(d)=0$, i.e. for which $C_k<0$ for all $k$. From the linearized analysis performed in Section~\ref{sec:bif}, we see that the effect of increasing $d_{21}$ depends only on the sign of $\beta$. That is, if $\beta$ is negative, then increasing $d_{21}$ increases each $C_k$ and the bifurcations gradually disappear. Indeed, from Proposition~\ref{prop:weak} and Proposition~\ref{prop:strong} we see that when $d_{21}$ reaches
\begin{equation}
\label{eq:d21_lim}
\tilde d_{21}^{k} := \dfrac{1}{|\beta|}\left(\alpha d_{12}-\dfrac{\det J_*}{\lambda_k}\right),
\end{equation}
the bifurcation associated to the $k$-th mode disappears. In particular, whenever $d_{21}$ is larger than ${\alpha d_{12}}/{\vert\beta\vert}$, there is no longer any bifurcation from the coexistence steady state $(u_*,v_*)$. This phenomena is illustrated in Section~\ref{sec:global}.

On the other hand, if $\beta$ is positive, then increasing $d_{21}$ decreases each $C_k$ and all bifurcations persist. In particular, we can compute the asymptotic value of $d$ for which each bifurcation takes place, in the limit $d_{21}\to\infty$. Indeed, for the $k$-th mode we have that the bifurcation occurs when $d$ crosses
\begin{equation*}
d_\bif^k = \frac{-B_k+\sqrt{B_k^2-4A_kC_k}}{2A_k},
\end{equation*}
and we can compute
\begin{equation}
\label{eq:d_lim}
d_{\bif,\infty}^k := \lim_{d_{21}\to +\infty} d_\bif^k = \dfrac{\beta}{u_*\lambda_k}.
\end{equation}
This situation is also investigated in more details in Section~\ref{sec:global}. 

Let us briefly mention that, if $\beta$ is equal to $0$, then increasing $d_{21}$ does not change any of the $C_k$, and all bifurcations persist. However, the computation~\eqref{eq:d_lim} is still valid in that case, hence all the bifurcation points collapse to $0$ when $d_{21}$ goes to infinity. However, and very interestingly, the global branches of non homogeneous steady states themselves do not seem to collapse to $0$, see Section~\ref{sec:beta0}. Having $\beta$ exactly equal to $0$ is probably not very meaningful from a biological point of view, but it turns out that the parameter values used in~\cite{izuhara2008reaction} to compute a bifurcation diagram in the strong competition regime and in the triangular case $d_{21}=0$ actually yield $\beta=0$. Therefore, this singular behavior is observed if one tries to extend the study~\cite{izuhara2008reaction} and considers the non-triangular case with $d_{21}$ large.

Of course, all the above discussion can be transposed to predict what happens when $d_{12}$ is increased, the sign of $\alpha$ then being the determinant factor.

\section{A more global picture}
\label{sec:global}

In this section, we both illustrate and complement the results obtained in Section~\ref{sec:without_selfdiff} by numerically computing bifurcation diagrams of steady states for~\eqref{cross}, and analysing the obtained solutions. We focus here on the one dimensional case, which is already very rich. The two-dimensional case was analysed looking at the process of pattern formation in \cite{gambino2013pattern}, and the bifurcation diagram will be investigated in a forthcoming work~\cite{CKCS}.

In dimension one, without loss of generality, we can consider $\Omega=(0,1)$. As already mentioned in the Introduction, we take $d_1=d_2=d$ as the bifurcation parameter, and also study what happens to the bifurcation diagrams when $d_{21}$ or $d_{12}$ is varied. All the parameter sets that we consider in this section are listed in Table \ref{tab:param}, where the sign of the crucial quantities $\alpha$ and $\beta$ are highlighted. The first and the second parameter sets were already studied in the literature (see the references in the table), while the third selects the weak competition case and leads to $\alpha+\beta>0$ in order to illustrate the analytical results of Theorem \ref{th:large_cd}. The fourth one corresponds to the strong competition case, but it selects a less particular case than the second parameter set (for which $\beta$ was exactly equal to $0$). For this last parameter set, we show that the system admits a non trivial bifurcation diagram even without cross-diffusion. However, all the non-homogeneous steady states obtained there seem to be unstable (in accordance with~\cite{kishimoto1985spatial}). We then numerically study how/when the addition of cross-diffusion leads to the stabilization of some of these non-homogeneous steady states.

\begin{remark} We use several conventions for the figures. Hereafter, thicker lines in the bifurcation diagrams denote stable solutions, while we use thinner lines for unstable ones. Circles, crosses and diamonds indicate the presence of branch points, fold points and Hopf points, respectively.
\end{remark}

\begin{table}[!h]
\centering
\begin{tabular}{ccccccc>{\columncolor[gray]{0.9}}c>{\columncolor[gray]{0.9}}cc}
$r_1$&$r_2$&$a_1$&$a_2$&$b_1$&$b_2$&regime&$\alpha$&$\beta$&\\
\midrule[2pt]
5&2&3&3&1&1&weak competition &$+$&$-$&\cite{iida2006diffusion,izuhara2008reaction,breden2018existence}\\
\midrule
2&5&1&1&0.5&3&strong competition  &$+$&$0$&\cite{izuhara2008reaction}\\
\midrule
15/2&16/7&4&2&6&1&weak competition  &$-$&$+$&\\
\midrule
5&5&2&3&5&4&strong competition  &$+$&$+$&\\
\bottomrule[2pt]
\end{tabular}
\caption{The parameter sets used in the numerical simulations presented in this section.}
\label{tab:param}
\end{table}


\subsection{First parameter set in Table~\ref{tab:param}}

As we already said, the first parameter set in Table \ref{tab:param} corresponds to the weak competition case and it was already used in literature \cite{iida2006diffusion, izuhara2008reaction, breden2018existence} in the triangular case ($d_{21}=0$). The corresponding bifurcation diagram, already known in the literature and carefully described in \cite{CKCS}, is reported in Figure \ref{wI_d21_0} with respect to the quantity $v(0)$. In this work, we will use another projection to display the bifurcation diagrams, namely $||u||_{L_2}$ instead of $v(0)$, as shown in Figure \ref{wI_L2_d21_0}. This representation reduces the number of branches (some branches on Figure~\eqref{wI_d21_0} are related to others by some symmetry, which leave $||u||_{L_2}$ invariant), and makes upcoming diagrams with many more branches easier to read. Different \emph{types} of solutions occur on the branches, and some of them are reported in Figure \ref{solTypes_triangular}; we refer to \cite{CKCS} for a more detailed characterization. Solutions in Figures \ref{TypeSol_d21_0_blue_3} and \ref{TypeSol_d21_0_blue_2} correspond to different points on the blue bifurcation branch in Figure \ref{v0-L2} and presenting half a bump, while Figure \ref{TypeSol_d21_0_red_1} shows a solution on the red branch, characterized by a complete bump.

\begin{figure}[!ht]
\centering
\subfloat[\label{wI_d21_0}]{
\begin{overpic}[width=0.5\textwidth,tics=10,trim={3cm 8cm 3cm 9cm},clip]{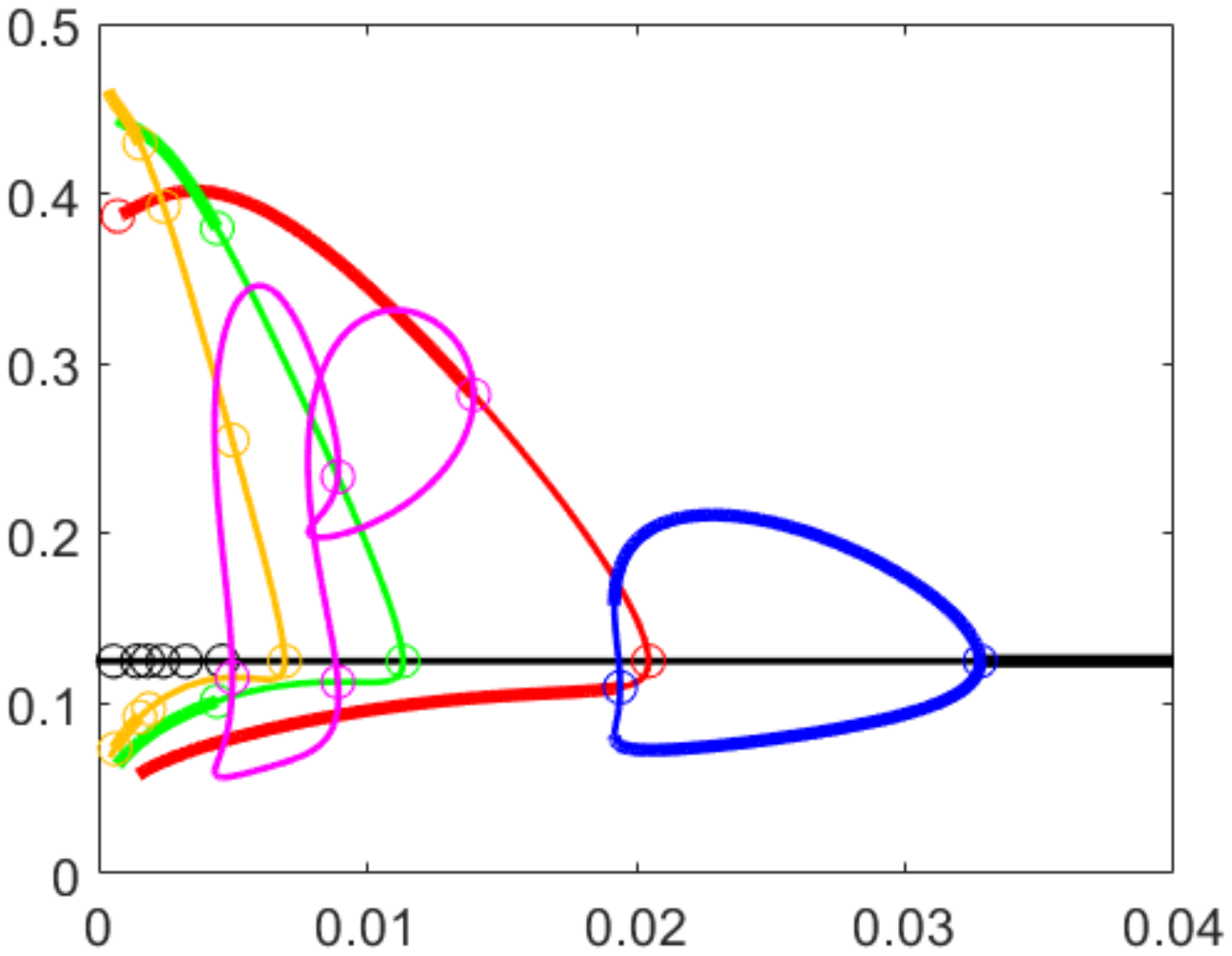}
\put(0,30){\rotatebox{90}{$v(0)$}}
\put(79,0){$d$}
\end{overpic} 
}
\subfloat[\label{wI_L2_d21_0}]{
\begin{overpic}[width=0.5\textwidth,tics=10,trim={3cm 8cm 3cm 9cm},clip]{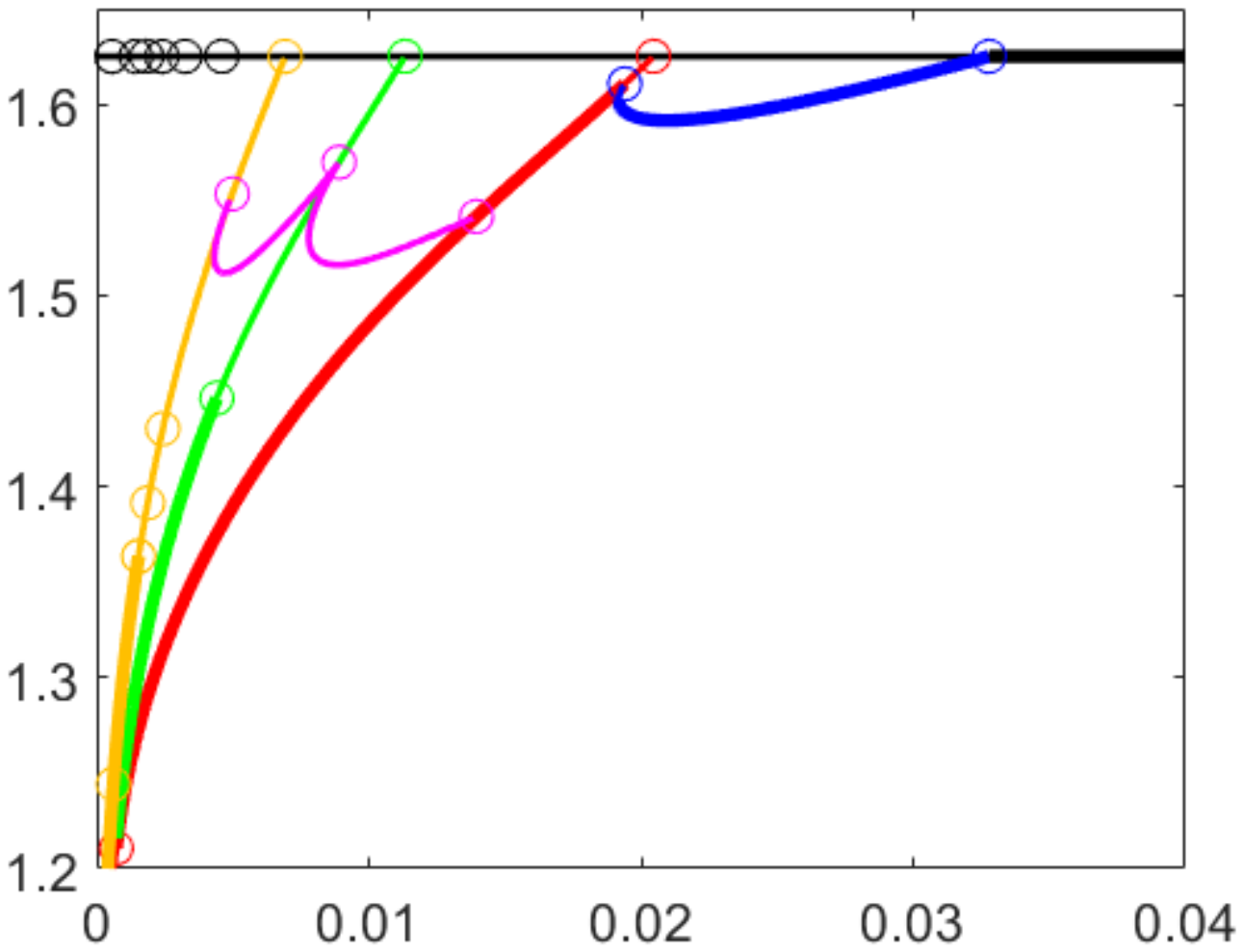}
\put(0,30){\rotatebox{90}{$||u||_{L_2}$}}
\put(79,0){$d$}
\end{overpic} 
}
\caption{Bifurcation diagrams represented with different quantities in the weak competition case (first parameter set in Table \ref{tab:param}, with $d_{12}=3$ and $d_{21}=0$). (a) ``Usual'' bifurcation diagram with respect to $v(0)$. (b) Bifurcation diagram with respect to $||u||_{L^2}$. }
\label{v0-L2}
\end{figure}

\begin{figure}[!ht]
\centering
\hspace{-0.7cm}
\subfloat[\label{TypeSol_d21_0_blue_3} blue branch, $d=3\cdot 10^{-2}$]{
\begin{overpic}[width=0.35\textwidth,tics=10,trim={3cm 8cm 3cm 8cm},clip]{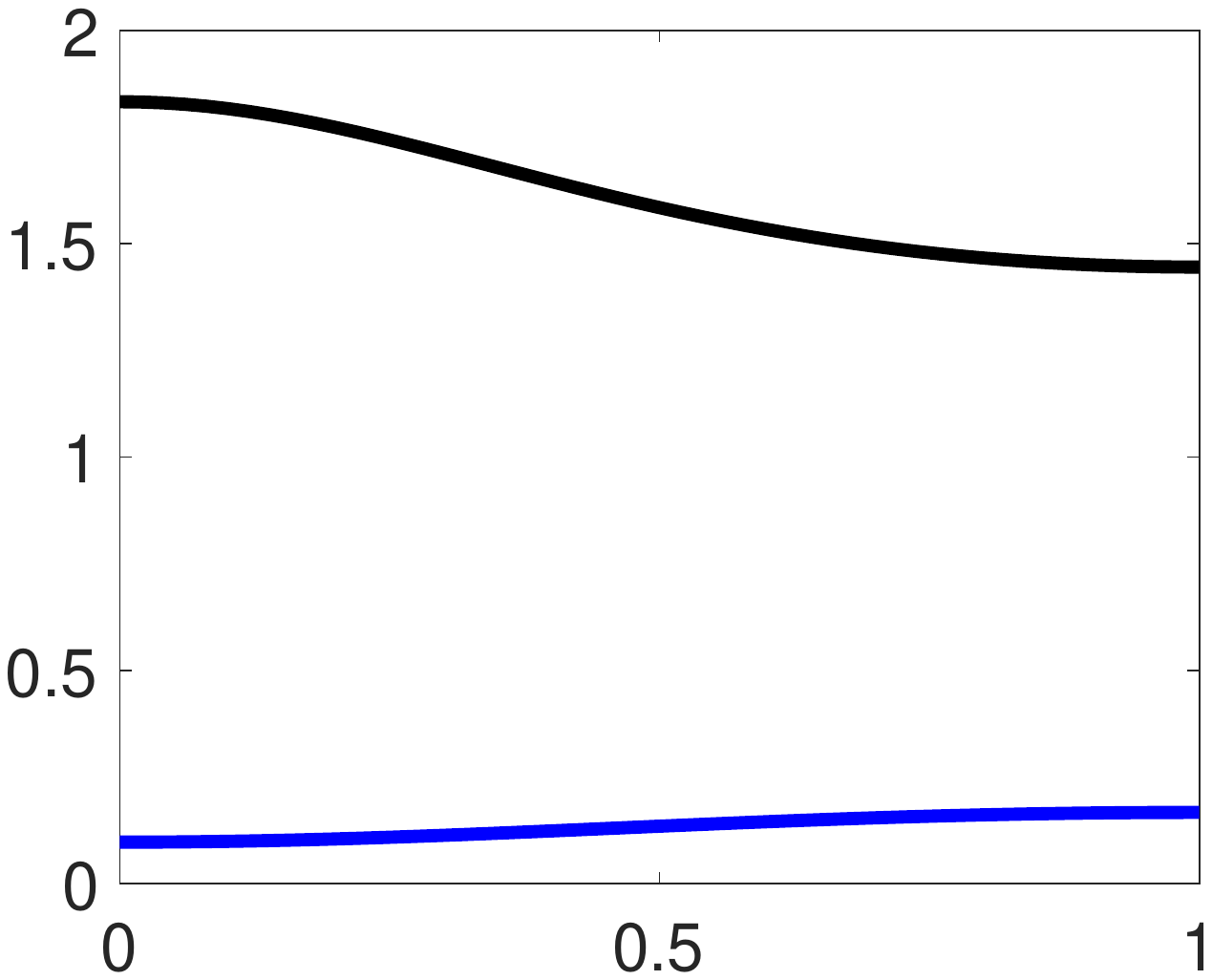}
\put(0,30){\rotatebox{90}{$u,v$}}
\put(75,2){$x$}
\end{overpic} 
}
\hspace{-0.7cm}
\subfloat[\label{TypeSol_d21_0_blue_2} blue branch, $d=2\cdot 10^{-2}$]{
\begin{overpic}[width=0.35\textwidth,tics=10,trim={3cm 8cm 3cm 8cm},clip]{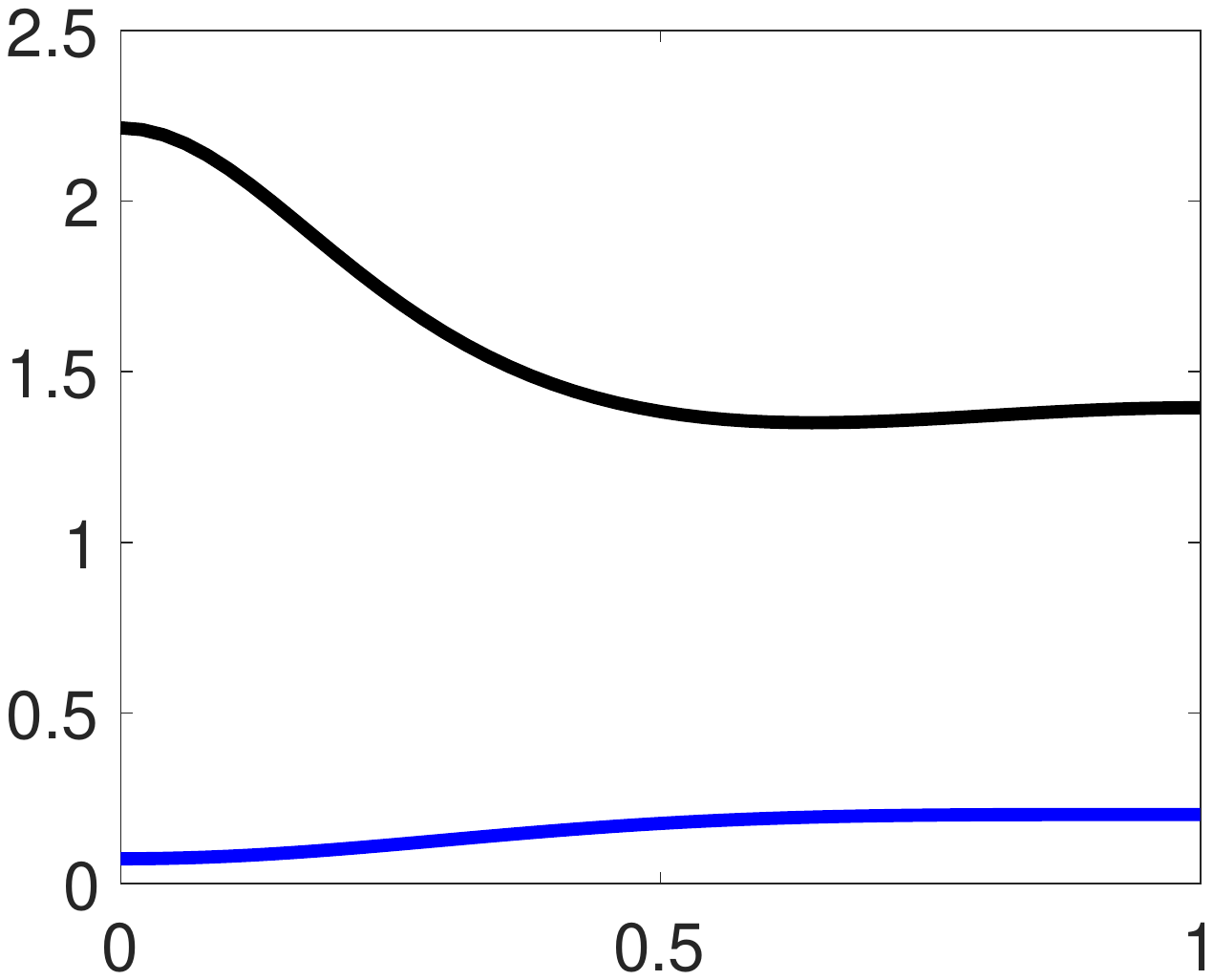}
\put(0,30){\rotatebox{90}{$u,v$}}
\put(75,2){$x$}
\end{overpic} 
}
\hspace{-0.7cm}
\subfloat[\label{TypeSol_d21_0_red_1} red branch, $d=10^{-2}$]{
\begin{overpic}[width=0.35\textwidth,tics=10,trim={3cm 8cm 3cm 8cm},clip]{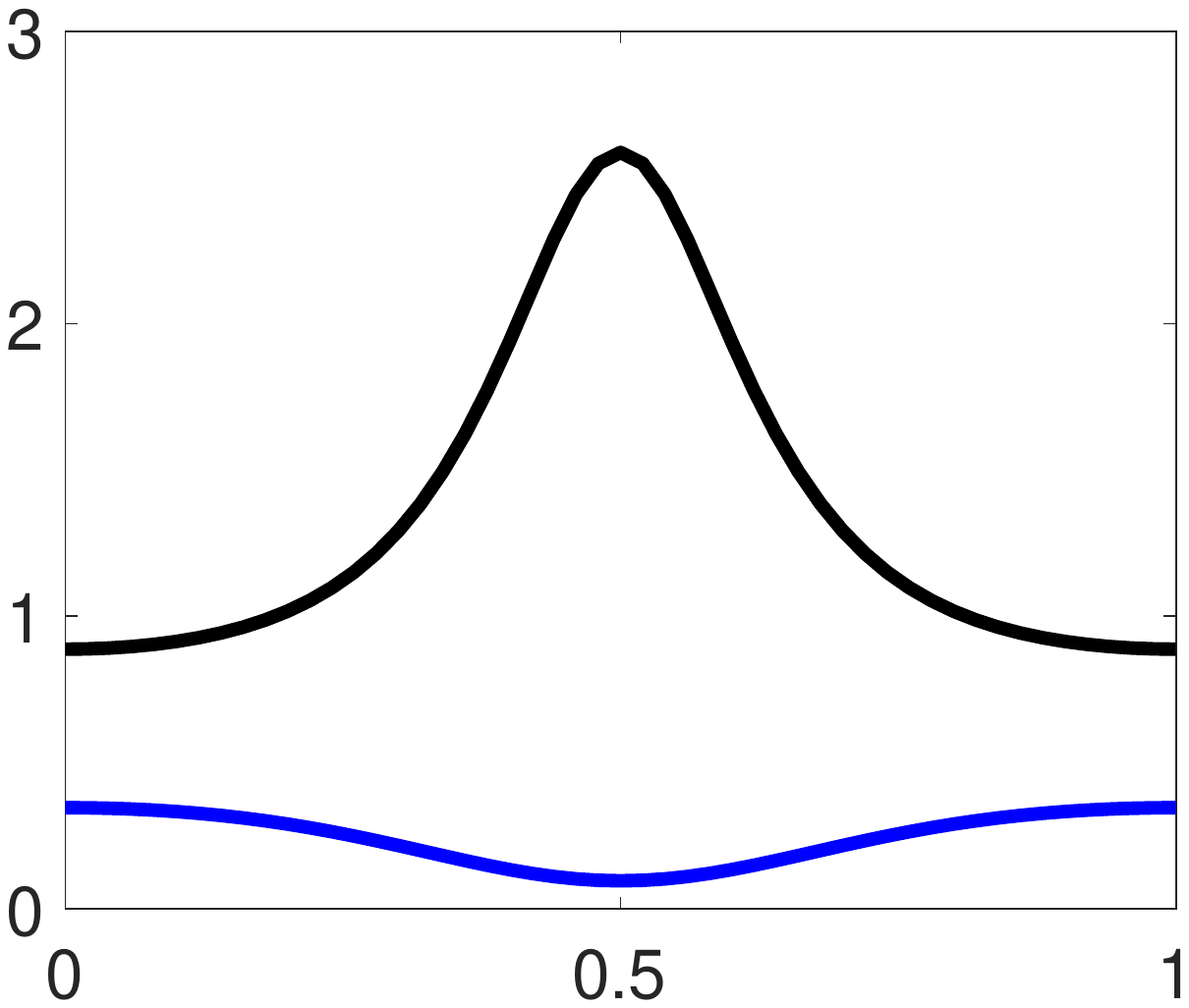}
\put(3,30){\rotatebox{90}{$u,v$}}
\put(75,2){$x$}
\end{overpic} 
}
\caption{Different types of stable solutions on the blue and red branches of the bifurcation diagram \ref{v0-L2}. The densities of $u$ and $v$ on the domain are denoted with black and blue lines respectively.}
\label{solTypes_triangular}
\end{figure}

\subsubsection{Transition between the triangular case and the full one when $d_{21}$ increases, disappearance of primary bifurcations}

With the parameter set that we choose here (the first line of Table~\ref{tab:param}, with $d_{12}=3$ and $d_{21}=0$) we are in \ref{w3} of Proposition \ref{prop:weak}, and we have $$\frac{\alpha d_{12}}{\vert\beta\vert}>\frac{\det J_*}{\lambda_1}.$$ Thus there is a bifurcation from the homogeneous steady state $(u_*,v_*)$ associated to each mode when $d_{21}=0$, as highlighted in Figure~\ref{v0-L2} (we can only display the first few, as they accumulate toward $d=0$). However, notice that $\beta$ is negative. Therefore, as discussed in Section~\ref{sec:full}, if we increase the cross-diffusion parameter $d_{21}$ the bifurcations corresponding to the first modes disappear one after the other.

In Figure~\ref{wI_d21_0p045}, corresponding to $d_{21}=0.045$, the first bifurcation branch (blue branch in Figure \ref{wI_L2_d21_0}) is no longer present, while the other branches are still bifurcating from the homogeneous one. However, stable solutions also arise on a secondary bifurcation branch (light blue), bifurcating from the red one. Choosing $d_{21}=0.055$ (Figure \ref{wI_d21_0p055}), we see that also the second (red) branch of Figure~\ref{wI_L2_d21_0} has disappeared as primary bifurcation branch from the homogeneous one, but it is still present (and it can be stable) as a secondary bifurcation branch (pastel red) from the green one. Notice that this behaviour cannot be predicted from the linear analysis, and it leads to the appearance of stable non-homogeneous stationary solutions in a parameter regime where the homogeneous steady state is still stable. In Figure \ref{TypeSol} some stable solutions (on the pastel blue and red branches for $d_{21}=0.045$, and pastel blue and pastel red branches for $d_{21}=0.055$) coexisting with the homogeneous one are reported. On these secondary branches, the solutions still present half a bump and a complete bump, as in the triangular case.

\begin{figure}[!ht]
\centering
\subfloat[\label{wI_d21_0p045}$d_{21}=0.045$]{
\begin{overpic}[width=0.5\textwidth,tics=10,trim={3cm 8.5cm 3cm 9cm},clip]{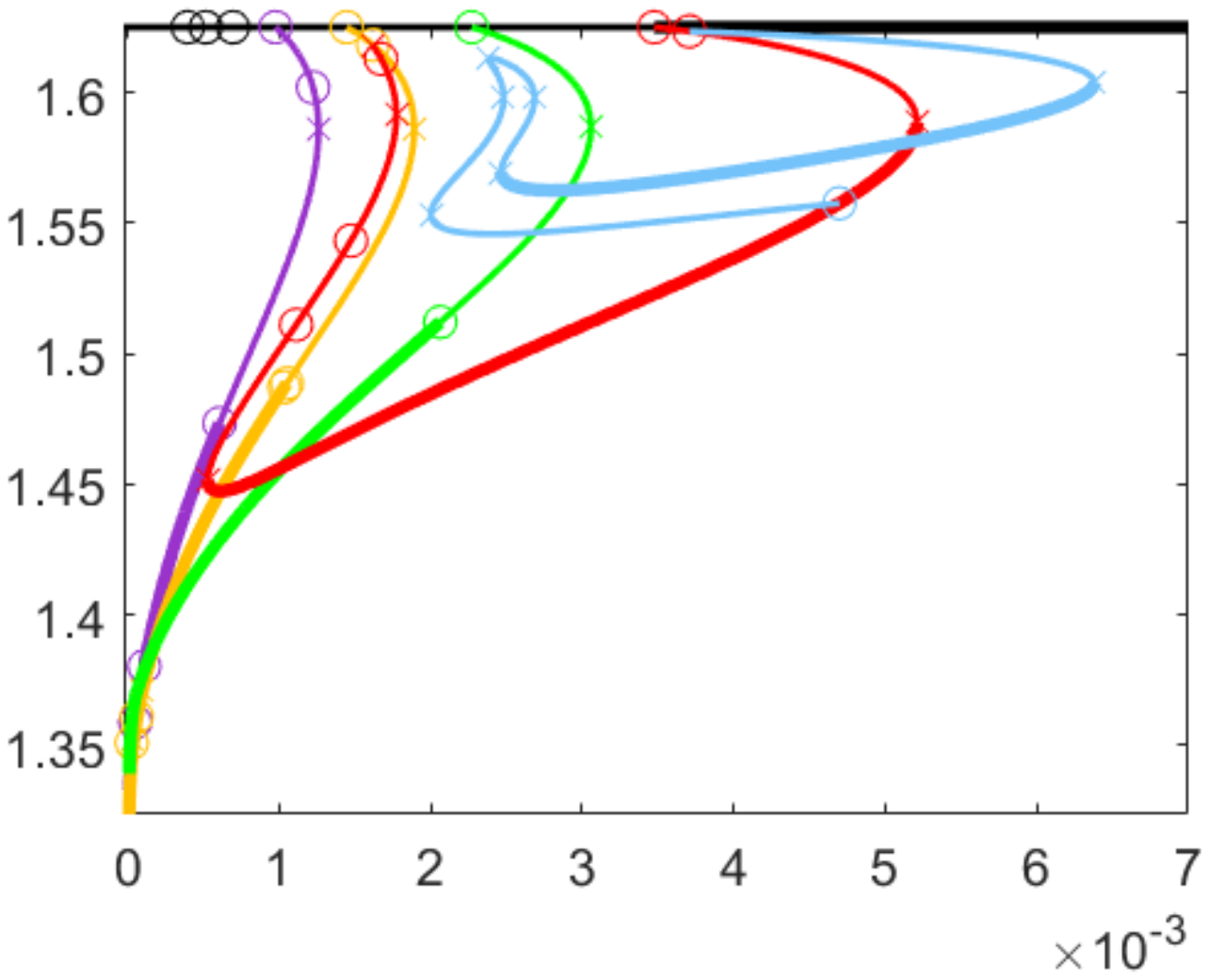}
\put(0,30){\rotatebox{90}{$||u||_{L_2}$}}
\put(90,10){$d$}
\end{overpic} 
}
\subfloat[\label{wI_d21_0p055}$d_{21}=0.055$]{
\begin{overpic}[width=0.5\textwidth,tics=10,trim={3cm 8.5cm 3cm 9cm},clip]{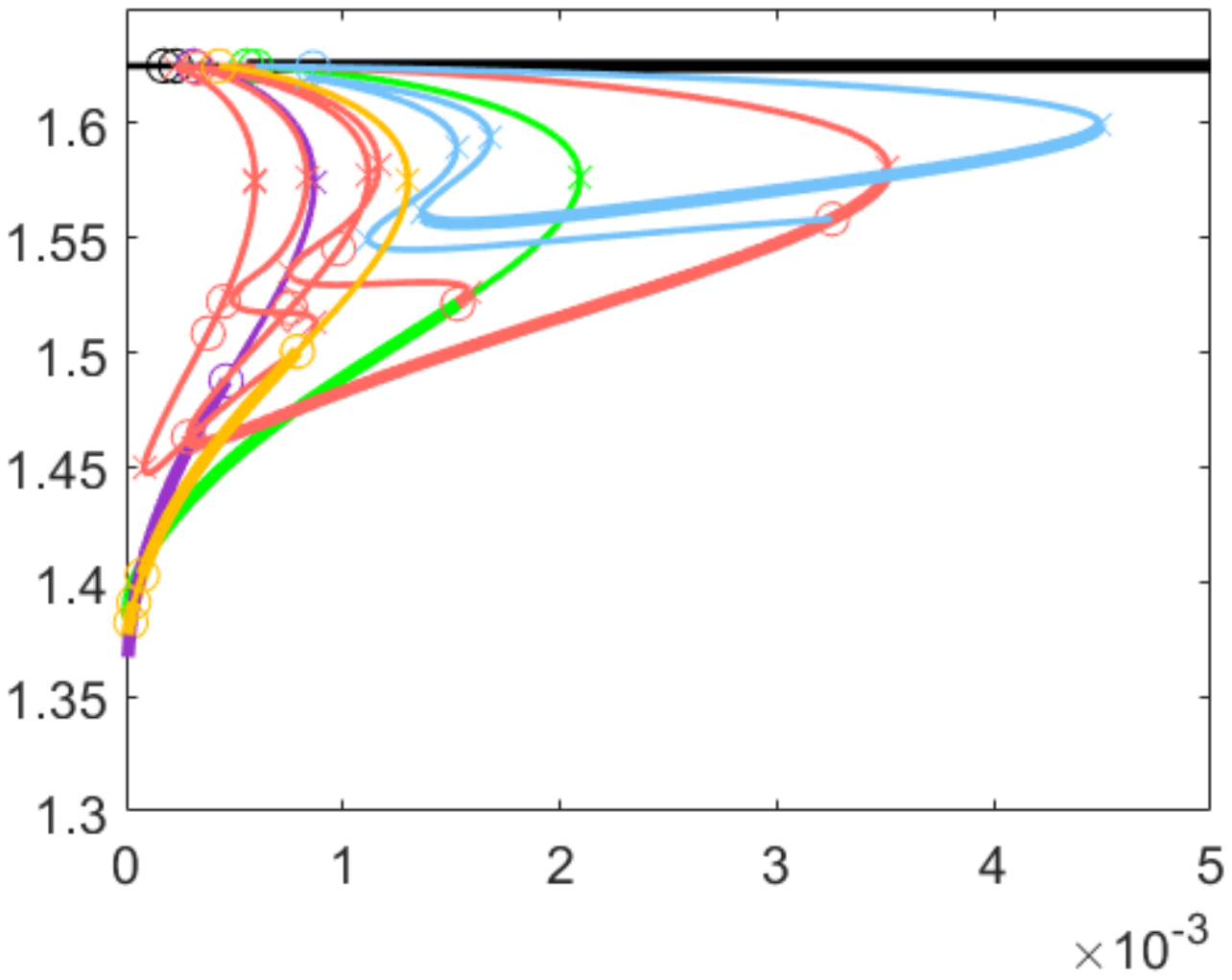}
\put(0,30){\rotatebox{90}{$||u||_{L_2}$}}
\put(90,10){$d$}
\end{overpic} 
}
\caption{Bifurcation diagrams for different values of the cross-diffusion coefficient $d_{21}$ in the weak competition case (first parameter set in Table \ref{tab:param}, with $d_{12}=3$). Notice that the range of values of $d$ for which non-trivial solutions exist is getting smaller and smaller when $d_{21}$ increases (especially compared to Figure~\ref{wI_L2_d21_0}).}
\label{DB_weak_I}
\end{figure}

\begin{figure}[!ht]
\centering
\subfloat[\label{TypeSol_p0p45_pastelblue_3} pastel blue branch, $d=3\cdot 10^{-3}$]{
\begin{overpic}[width=0.4\textwidth,tics=10,trim={3cm 8cm 3cm 8cm},clip]{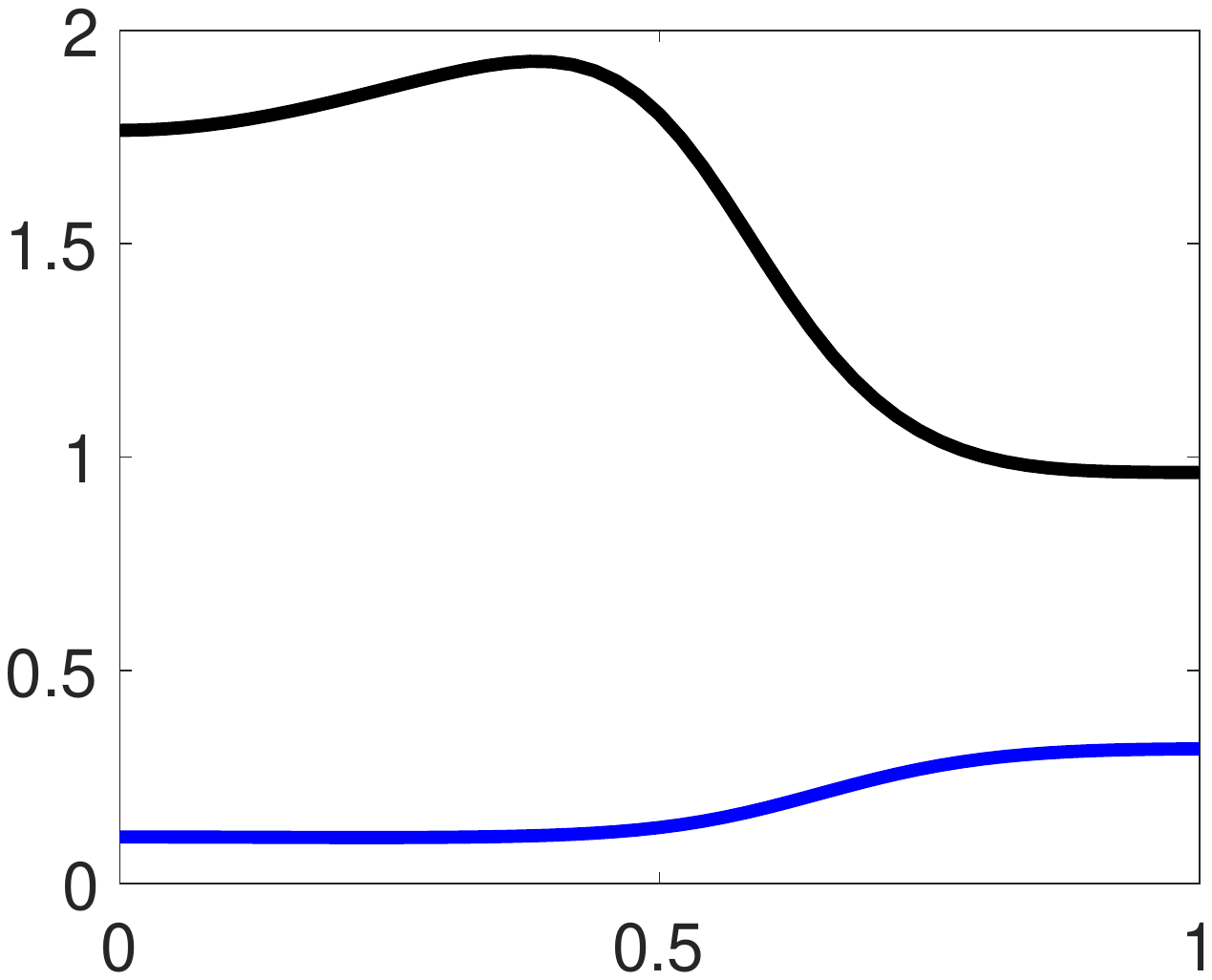}
\put(0,30){\rotatebox{90}{$u,v$}}
\put(75,2){$x$}
\end{overpic} 
}
\subfloat[\label{TypeSol_p0p55_pastelblue_1p5} pastel blue branch, $d=1.5\cdot 10^{-3}$]{
\begin{overpic}[width=0.4\textwidth,tics=10,trim={3cm 8cm 3cm 8cm},clip]{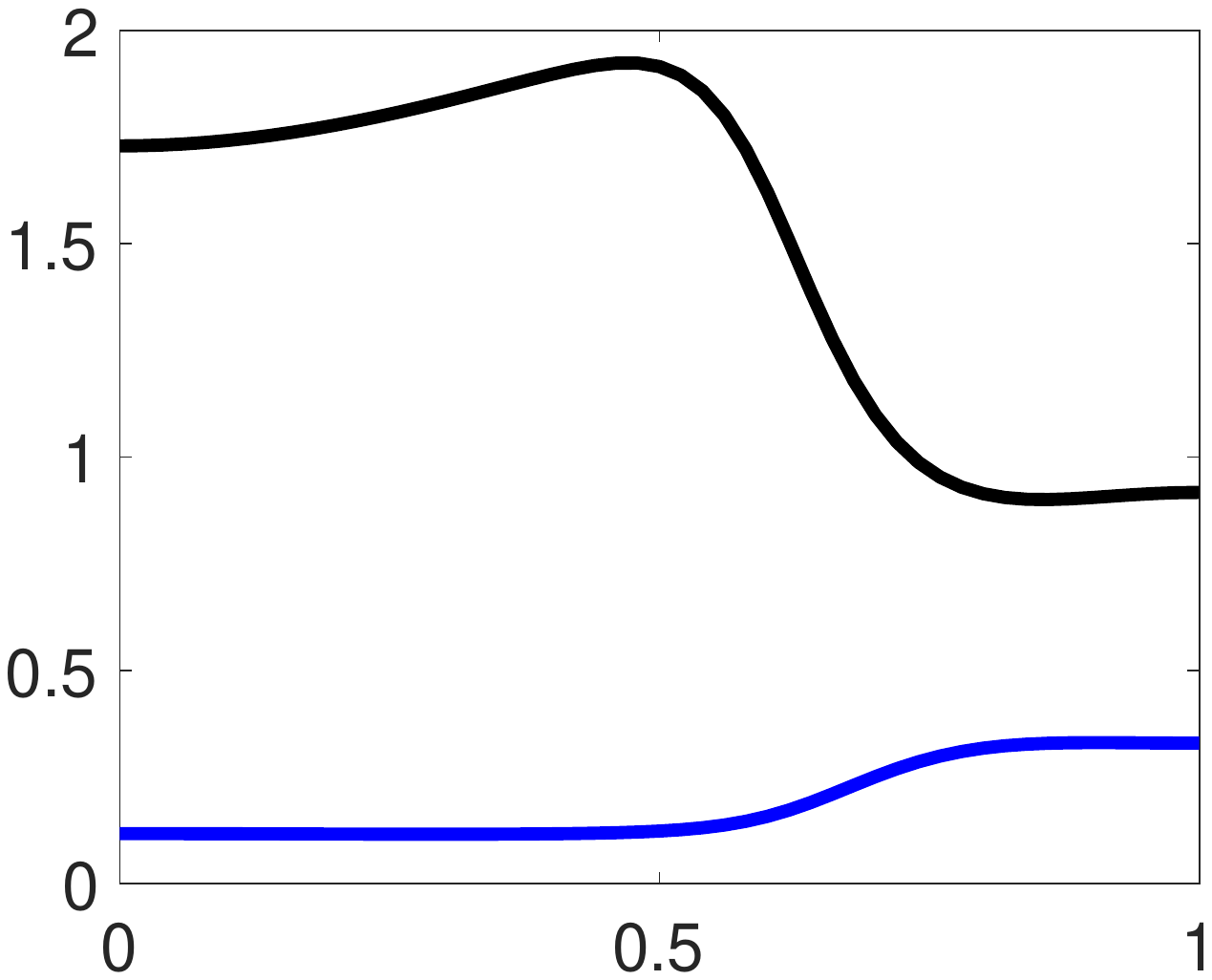}
\put(0,30){\rotatebox{90}{$u,v$}}
\put(75,2){$x$}
\end{overpic} 
}\\
\subfloat[\label{TypeSol_p0p45_red_3} red branch, $d=3\cdot 10^{-3}$]{
\begin{overpic}[width=0.4\textwidth,tics=10,trim={3cm 8cm 3cm 8cm},clip]{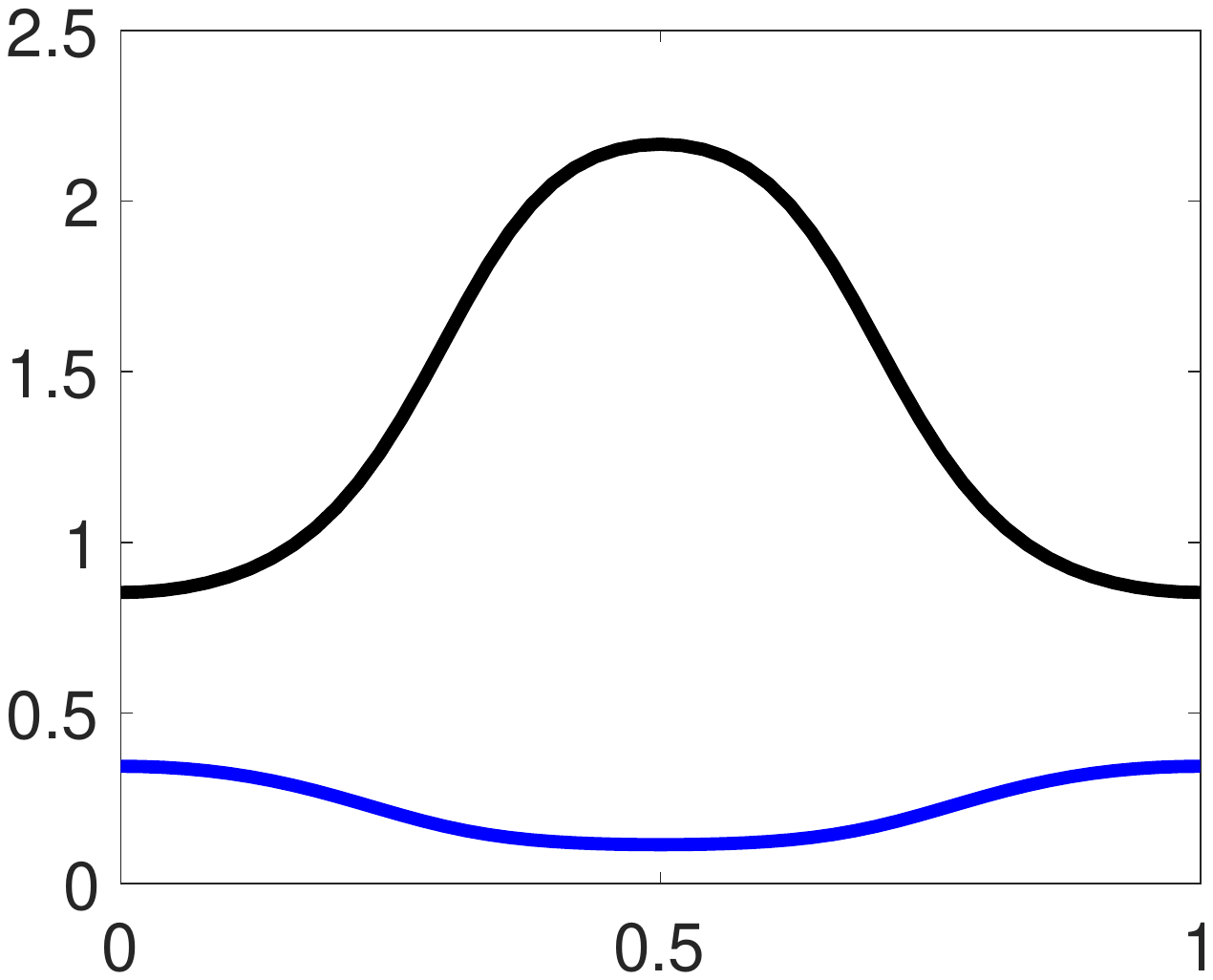}
\put(0,30){\rotatebox{90}{$u,v$}}
\put(75,2){$x$}
\end{overpic} 
}
\subfloat[\label{TypeSol_p0p55_pastelred_1p5} pastel red branch, $d=1.5\cdot 10^{-3}$]{
\begin{overpic}[width=0.4\textwidth,tics=10,trim={3cm 8cm 3cm 8cm},clip]{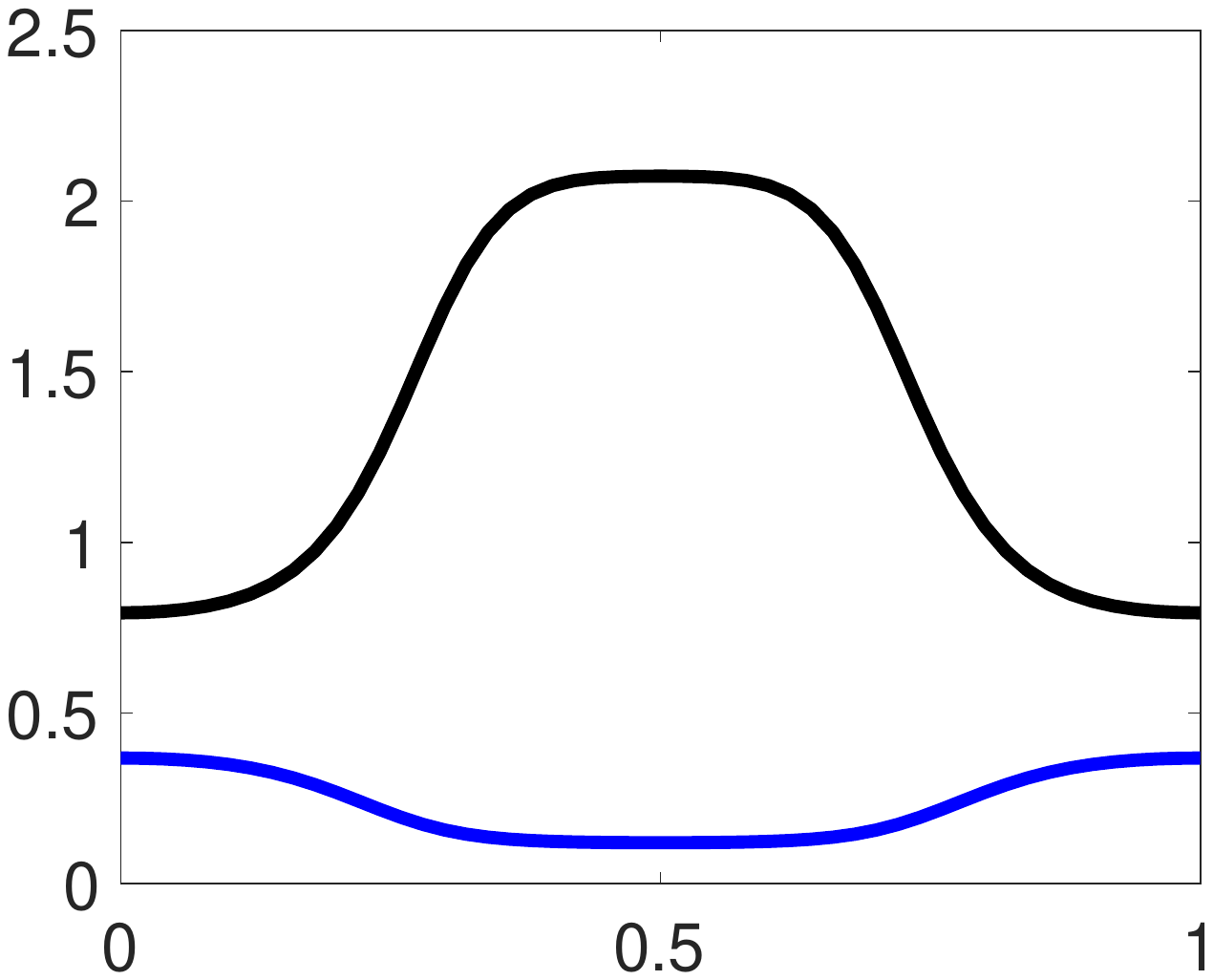}
\put(0,30){\rotatebox{90}{$u,v$}}
\put(75,2){$x$}
\end{overpic} 
}
\caption{Different types of stable solutions coexisting with the homogeneous one. Species $u$ and $v$ on the domain are denoted with black and blue lines respectively. Figures (a) and (c) refer to $d_{21}=0.045$ and correspond to solutions on the pastel blue and red branches of the bifurcation diagram in Figure \ref{wI_d21_0p045}. Figures (b) and (d) refer to $d_{21}=0.055$ and correspond to solutions on the pastel blue and pastel red branches of the bifurcation diagram in Figure \ref{wI_d21_0p055}.
}
\label{TypeSol}
\end{figure}

The disappearance of the first branch can be better visualized in Figure \ref{DB_weak_I_sequence}, where we zoom in on the first two branches and vary $d_{21}$ more slowly, showing for increasing values of $d_{21}$ the first two bifurcation branches (blue and red), corresponding to non-homogeneous solutions presenting half a bump and a complete bump (as in Figures \ref{solTypes_triangular} and \ref{TypeSol}). In Figure \ref{wI_d21_seq_0} we have $d^1_\bif>d^2_\bif$ and the first (blue) branch becomes unstable after a fold bifurcation and then it connects with the second (red) which becomes stable. In Figure \ref{wI_d21_seq_0p02} the first bifurcation point gets closer to the second but still $d^1_\bif>d^2_\bif$, the first bifurcation point is now subcritical and the bifurcation point between the two branches is closer to the homogeneous one. Note that the system exhibits now multistability as there are simultaneously locally stable steady states. In Figure \ref{wI_d21_seq_0p025} the blue branch presents a Hopf bifurcation point (which can give rise to time-periodic spatial patterns). Then the first bifurcation point crosses the second, meaning that $d^1_\bif<d^2_\bif$ (Figure \ref{wI_d21_seq_0p03}). In this case, we also see that the system can exhibit even more co-existing stable solutions for a certain range of parameters.
Further increasing $d_{21}$, the blue branch gets closer to the magenta one, shown in \ref{wI_L2_d21_0}. In Figure \ref{wI_d21_seq_0p03} the light blue branch is a secondary bifurcation branch (originating from the red one) which seems the result of the interaction of the blue and the magenta branches.  

In particular, we emphasize again that increasing $d_{21}$ leads to the appearance of stable non-homogeneous solutions coexisting with the stable homogeneous one.

\begin{figure}[!ht]
\centering
\subfloat[\label{wI_d21_seq_0}$d_{21}=0$]{
\begin{overpic}[width=0.5\textwidth,tics=10,trim={3cm 8.5cm 3cm 9cm},clip]{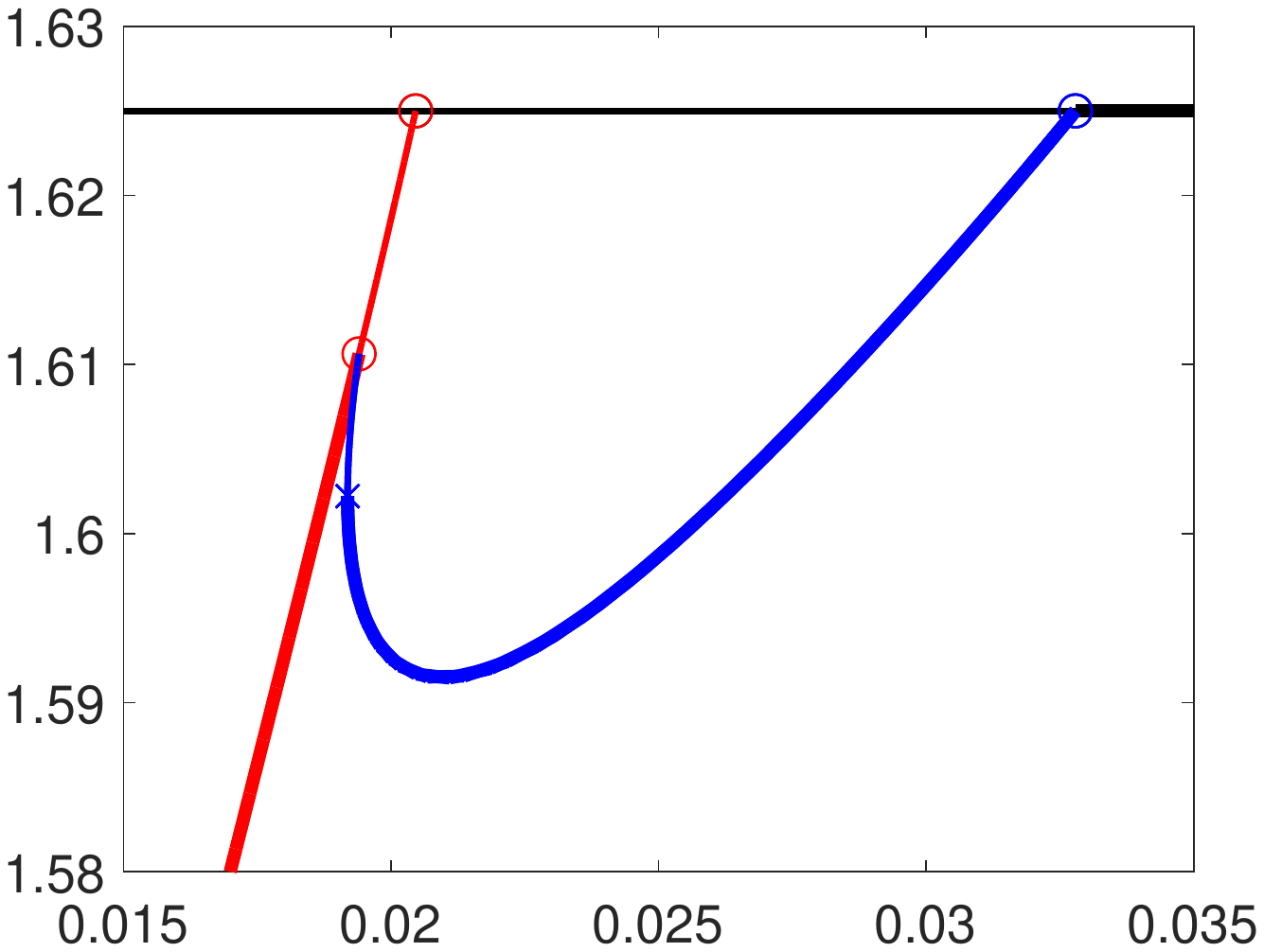}
\put(0,30){\rotatebox{90}{$||u||_{L_2}$}}
\put(90,10){$d$}
\end{overpic} 
}
\subfloat[\label{wI_d21_seq_0p02}$d_{21}=0.02$]{
\begin{overpic}[width=0.5\textwidth,tics=10,trim={3cm 8.5cm 3cm 9cm},clip]{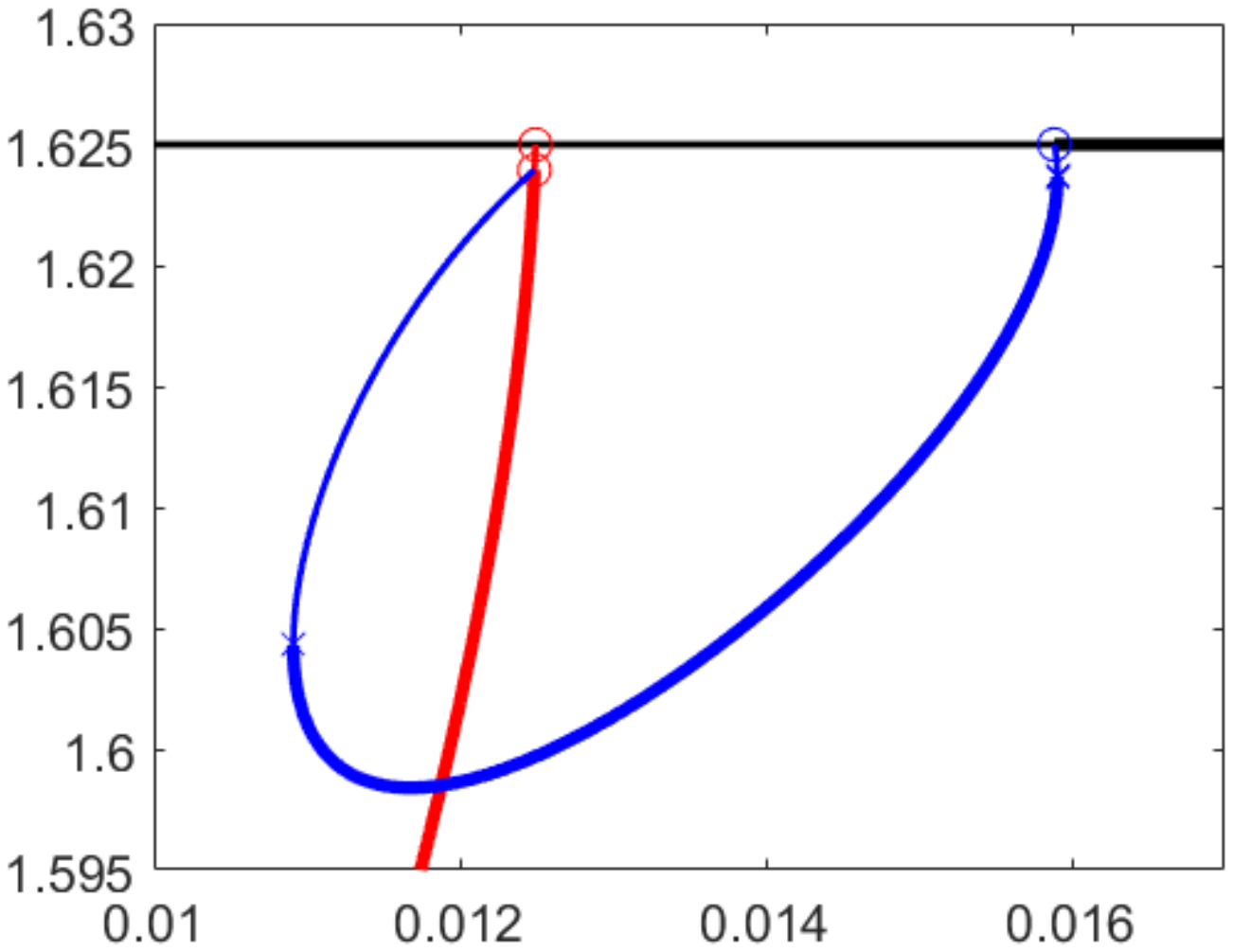}
\put(0,30){\rotatebox{90}{$||u||_{L_2}$}}
\put(90,10){$d$}
\end{overpic}
}\\
\subfloat[\label{wI_d21_seq_0p025}$d_{21}=0.025$]{
\begin{overpic}[width=0.5\textwidth,tics=10,trim={3cm 8.5cm 3cm 9cm},clip]{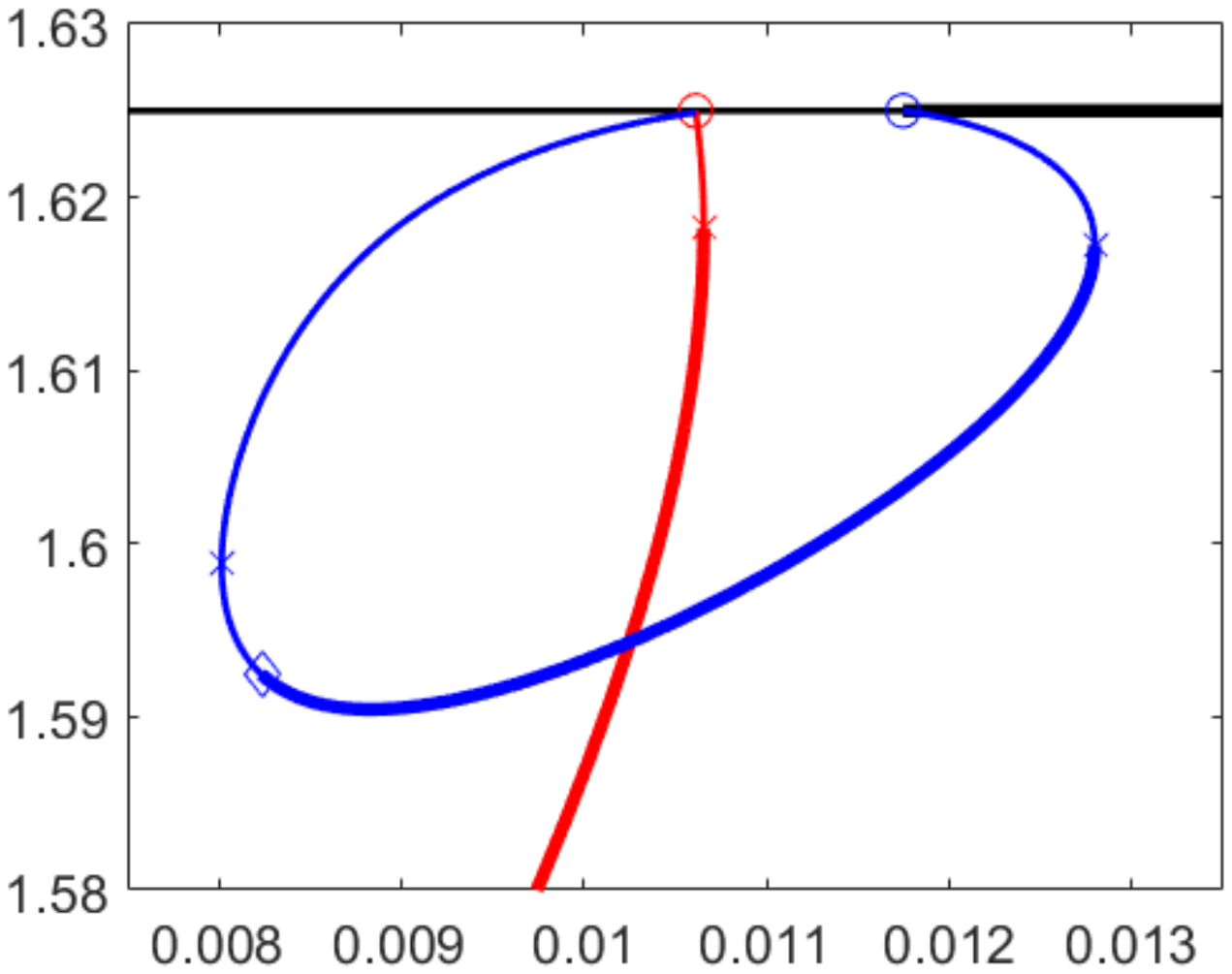}
\put(0,30){\rotatebox{90}{$||u||_{L_2}$}}
\put(90,10){$d$}
\end{overpic}
}
\subfloat[\label{wI_d21_seq_0p03}$d_{21}=0.03$]{
\begin{overpic}[width=0.5\textwidth,tics=10,trim={3cm 8.5cm 3cm 9cm},clip]{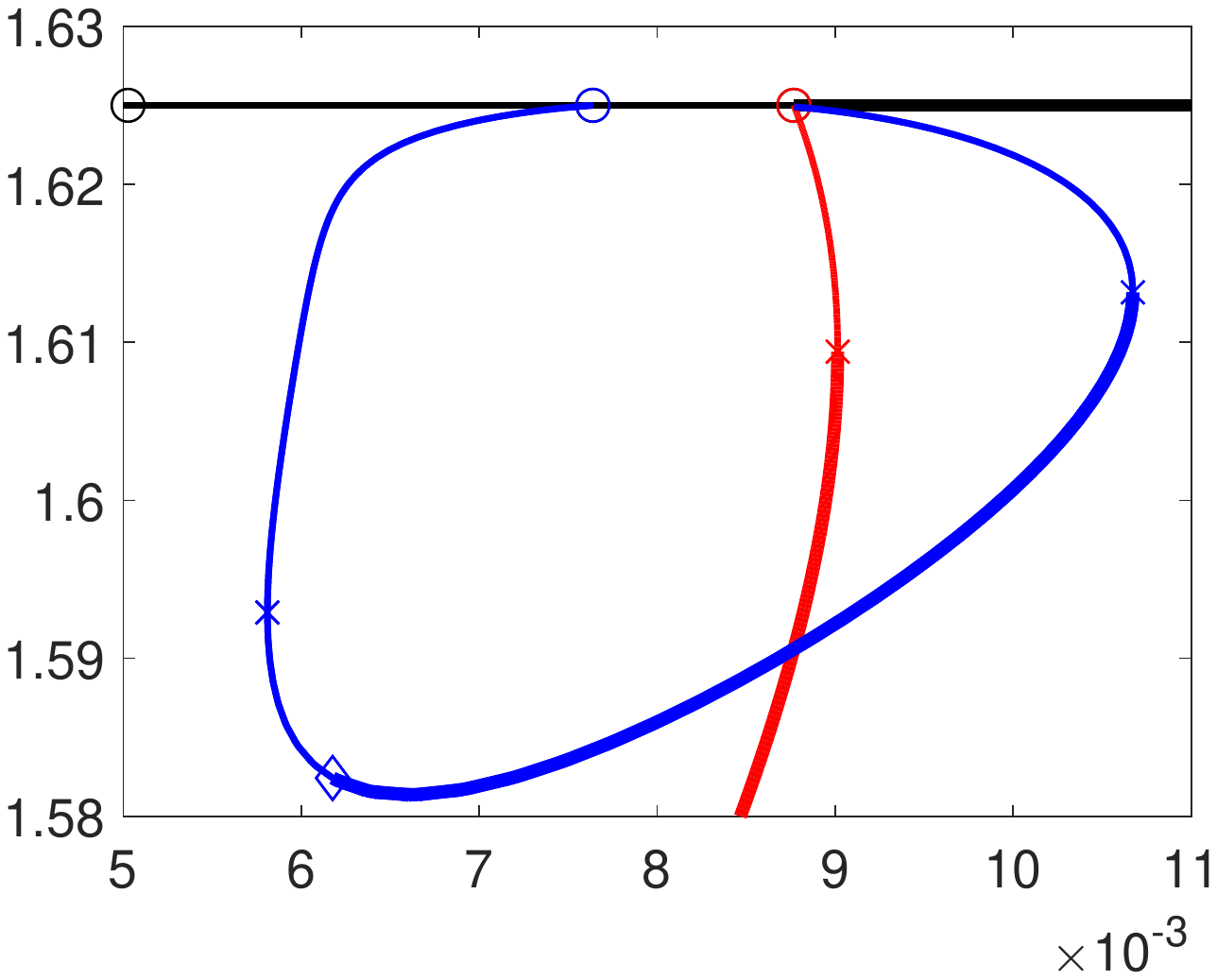}
\put(0,30){\rotatebox{90}{$||u||_{L_2}$}}
\put(90,10){$d$}
\end{overpic}
}\\
\subfloat[\label{wI_d21_seq_0p032}$d_{21}=0.032$]{
\begin{overpic}[width=0.5\textwidth,tics=10,trim={3cm 8.5cm 3cm 9cm},clip]{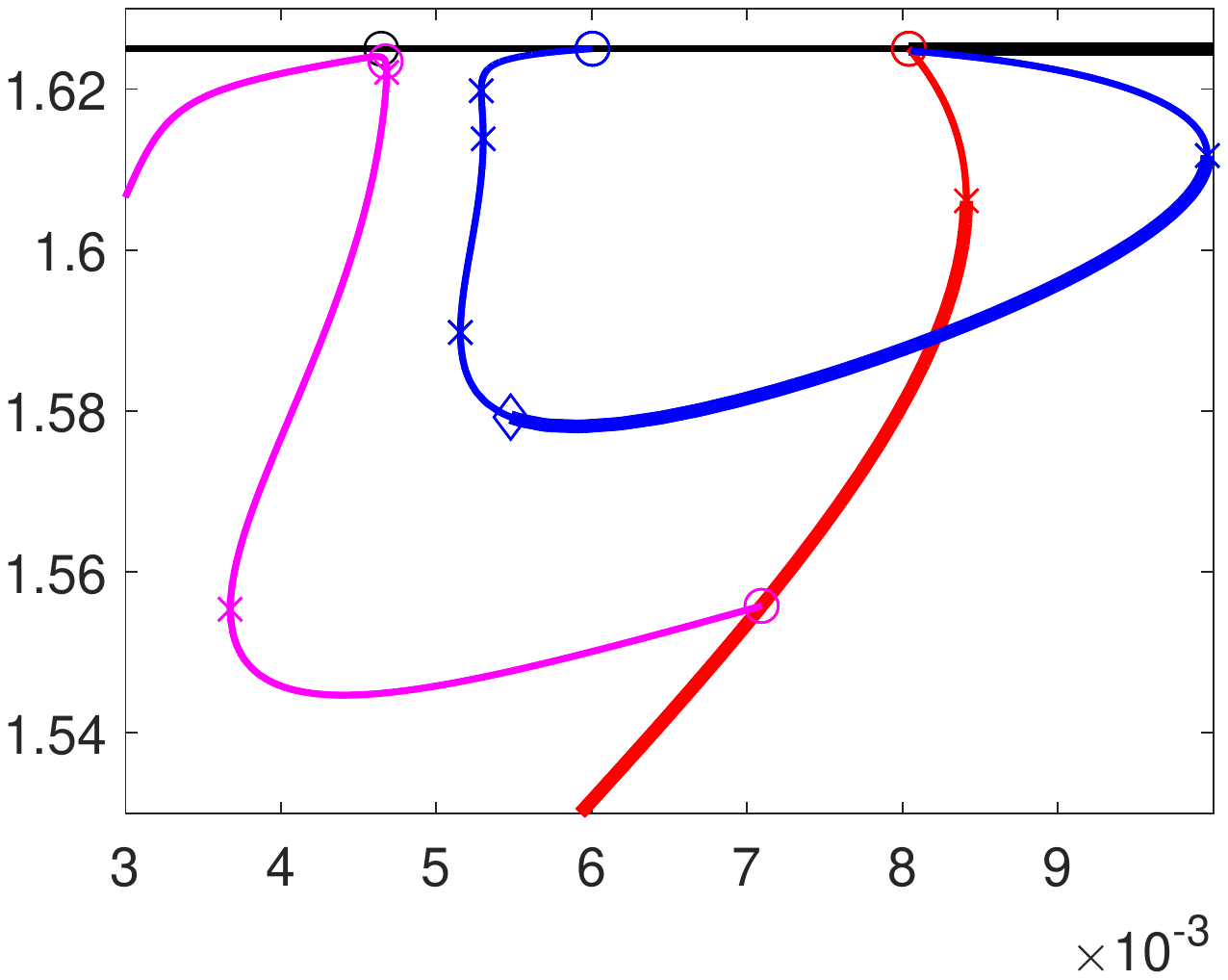}
\put(0,30){\rotatebox{90}{$||u||_{L_2}$}}
\put(90,10){$d$}
\end{overpic}
}
\subfloat[\label{wI_d21_seq_0p035}$d_{21}=0.035$]{
\begin{overpic}[width=0.5\textwidth,tics=10,trim={3cm 8.5cm 3cm 9cm},clip]{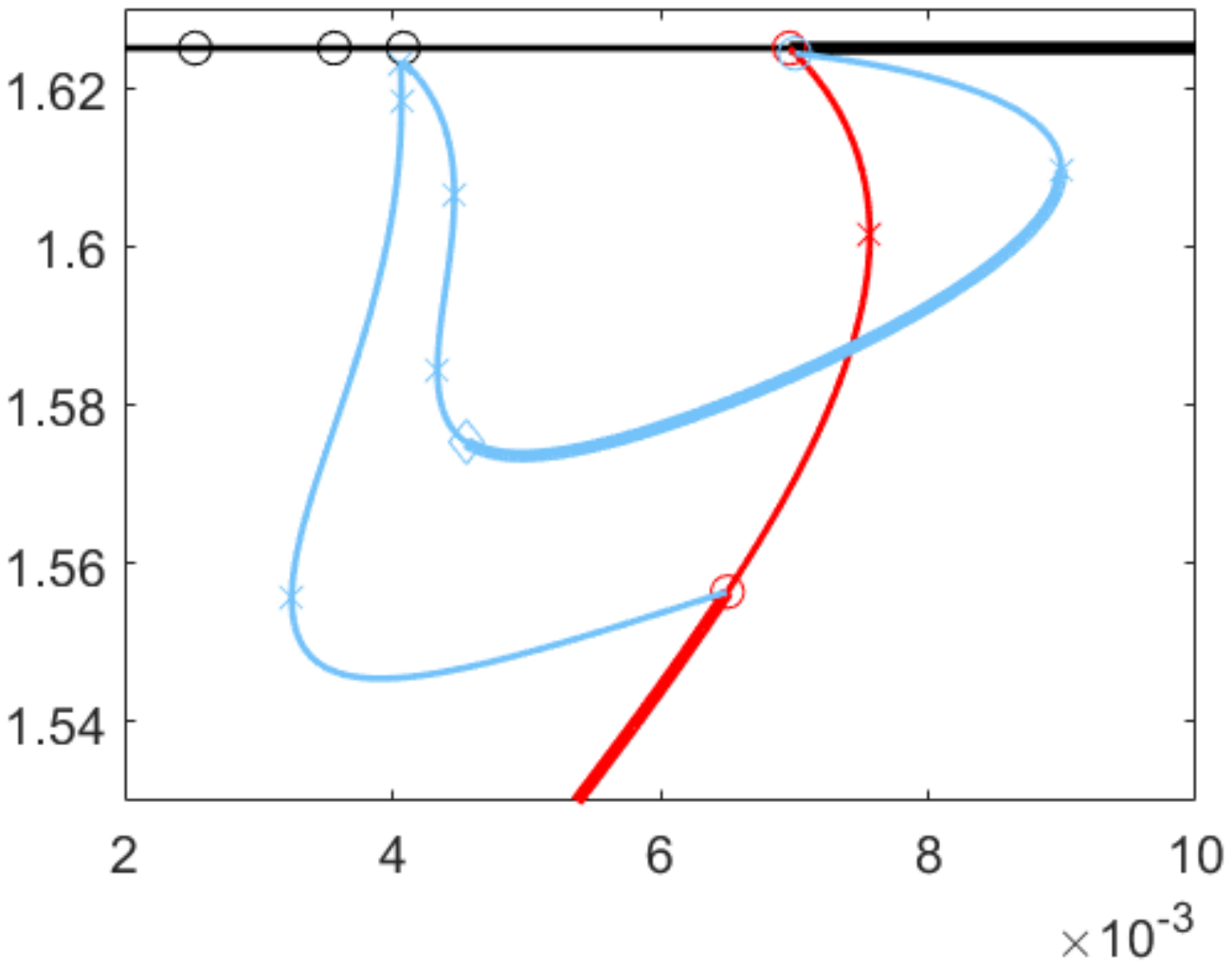}
\put(0,30){\rotatebox{90}{$||u||_{L_2}$}}
\put(90,10){$d$}
\end{overpic}
}
\caption{Bifurcation diagrams for different values of the cross-diffusion coefficient $d_{21}$ in the weak competition case (first parameter set in Table \ref{tab:param}, with $d_{12}=3$). We start in Figure~\ref{wI_d21_seq_0} with a zoom in of Figure~\ref{wI_L2_d21_0}, and then slowly increase $d_{21}$.}
\label{DB_weak_I_sequence}
\end{figure}

\subsubsection{Asymptotic behavior when $d_{12}$ goes to infinity}

We now want to verify the values of $d$ for which each bifurcation takes place, in the limit $d_{12}\to\infty$. To this end we still consider the first parameter set in Table \ref{tab:param}, fixing $d_{21}=0$ and increasing $d_{12}$. In this case we have that $\alpha>0,\;\beta<0$, therefore the limiting bifurcation values are given by
\begin{equation}
\label{eq:d_lim_d12}
d_{\bif,\infty}^k := \lim_{d_{12}\to +\infty} d_\bif^k = \dfrac{\alpha}{v_*\lambda_k},
\end{equation}
corresponding to \eqref{eq:d_lim}. In Table \ref{tab:conv_d12increasing} the convergence of the first three bifurcation points, numerically detected, to the predicted limited values is shown. In particular we observe that the values $d_\bif^k$ increase when $d_{12}$ increases, in contrast to the behaviour with respect to $d_{21}$. Looking at the solutions for $d_{12}=1000$ in Figure \ref{d12incr_sol_uv}, we observe that, when $d_{12}$ is large, the product of the two densities $uv$ (red dotted line) is close to constant, as predicted in~\cite{LouNi99}. 

\begin{table}[!h]
\centering
\begin{tabular}{cccc}
\midrule[2pt]
 $d_{12}$ & $d^1_{\bif}$& $d^2_{\bif}$ & $d^3_{\bif}$ \\
\midrule[2pt]
3 & 0.0328& 0.0205& 0.0113\\
\midrule
10 & 0.0762& 0.0273& 0.0131\\
\midrule
100  & 0.1190 & 0.0311 & 0.0139\\
\midrule
1000  & 0.1258 & 0.0315 & 0.0140\\
\midrule
\midrule
$d^k_{\bif,\infty}$  & 0.1267 &  0.0317 & 0.0141\\          
\bottomrule[2pt]
\end{tabular}
\caption{Convergence of the first three bifurcation points, numerically detected, to the predicted limited values  $\alpha/(v_*\lambda_k)$ when $d_{12}$ increases ($d_{21}=0$).}
\label{tab:conv_d12increasing}
\end{table}

\begin{figure}[!ht]
\centering
\subfloat[\label{d12incr_1000_1}]{
\begin{overpic}[width=0.4\textwidth,tics=10,trim={3cm 8.5cm 3cm 9cm},clip]{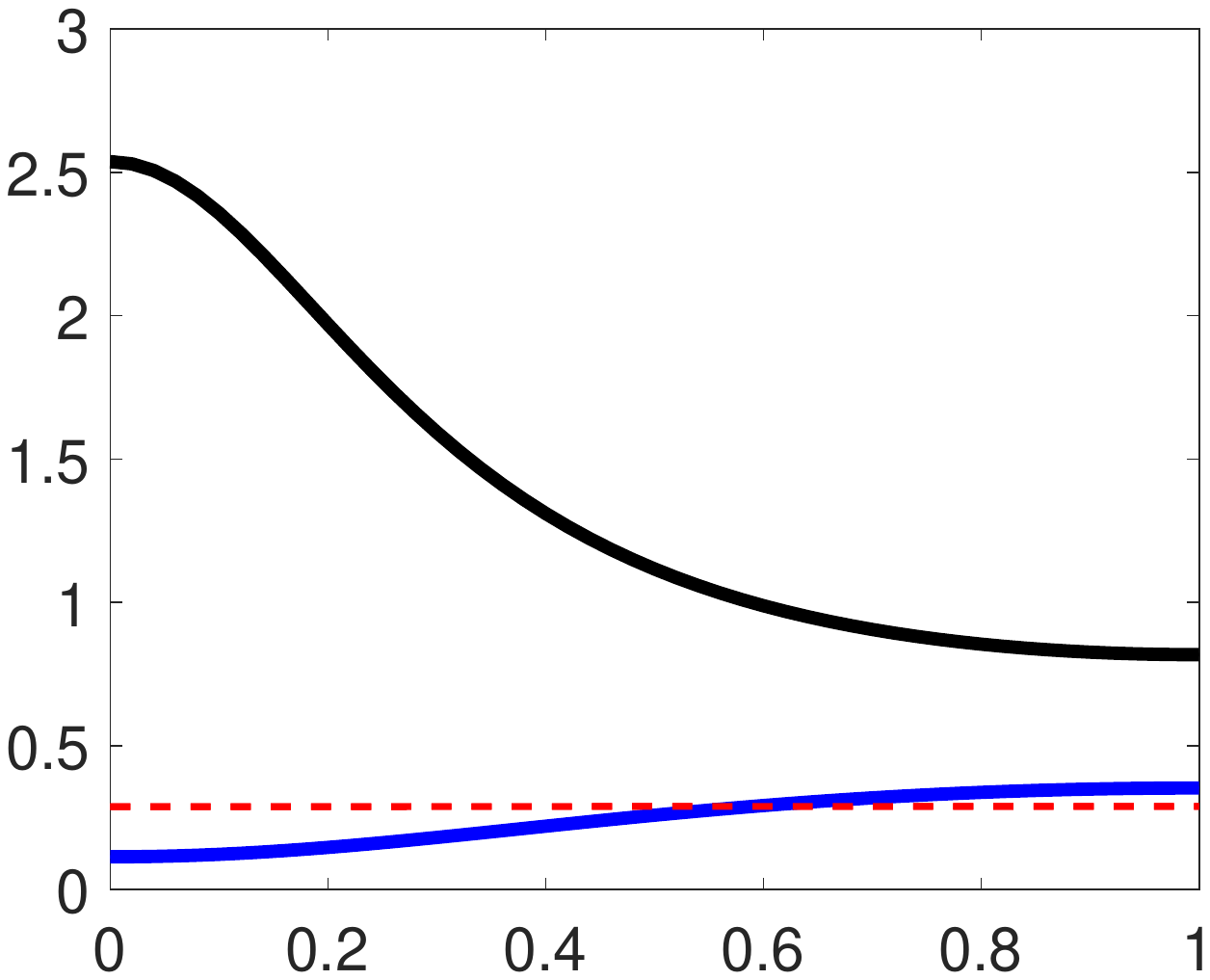}
\put(5,30){\rotatebox{90}{$u,\;v,\;uv$}}
\put(80,-2){$x$}
\end{overpic} 
}
\subfloat[\label{d12incr_1000_2}]{
\begin{overpic}[width=0.4\textwidth,tics=10,trim={3cm 8.5cm 3cm 9cm},clip]{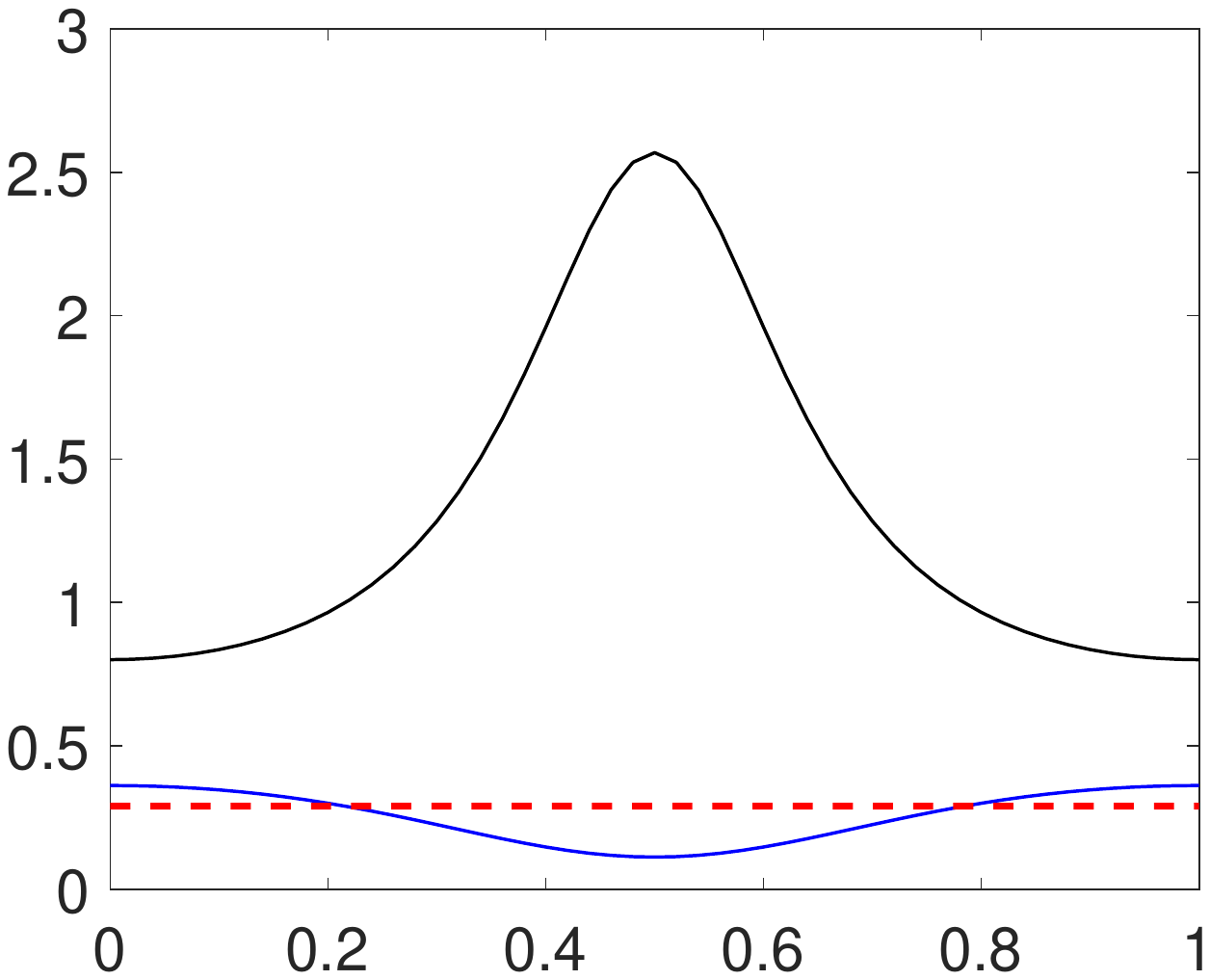}
\put(5,30){\rotatebox{90}{$u,\;v,\;uv$}}
\put(80,-2){$x$}
\end{overpic} 
}
\caption{Solutions belonging to different branches when $d_{12}=1000$. The species $u$ and $v$ are denoted with black and blue lines respectively, while the red line corresponds to $uv$.  As in the bifurcation diagrams, thin lines indicate unstable solutions, while thicker lines corresponds to stable ones. The red dotted line represents the product $uv$.}
\label{d12incr_sol_uv}
\end{figure}


\subsection{Second parameter set in Table~\ref{tab:param}: singular behavior due to having $\beta=0$}
\label{sec:beta0}

We consider now the second parameter set of Table \ref{tab:param}, already used in \cite{izuhara2008reaction} and corresponding to a very specific case in the strong competition regime, since $\beta=0$. According to \eqref{eq:d_lim}, we expect that the bifurcation points on the homogeneous branch collapse to zero as the cross-diffusion coefficient $d_{21}$ is increased, but we do not have information about the behaviour of the bifurcating branches. In Figure \ref{strong_BD_d21}, the bifurcation diagrams for increasing values of the cross-diffusion coefficient $d_{21}$ are reported, showing that, at first, the whole diagram seems to collapse as $d_{21}$ is increased. Incidentally, we can also see that at the same time the stable part of the first (blue) branch (arising from a Hopf bifurcation) shrinks, reducing the possibility of having stable inhomogeneous solutions. However, further increasing the value of $d_{21}$ we observe a different trend, also shown in Figure \ref{strong_mimura_d12increasing}. Here we plot on the same graph the first branch for increasing values of $d_{21}$: the lightest blue corresponds to $d_{21}=0$, the darkest to $d_{21}=1000$, while as usual the homogeneous solution is denoted in black. As predicted, the bifurcation points move to zero, but the branch folds (the corresponding value in the picture is $d_{21}=20$) and then expands again, leading to the appearance of stable inhomogeneous solutions. The same considerations apply to the other branches.

\begin{figure}[!ht]
\centering
\subfloat[\label{strong_d21_0}$d_{21}=0$]{
\begin{overpic}[width=0.5\textwidth,tics=10,trim={3cm 8.5cm 3cm 9cm},clip]{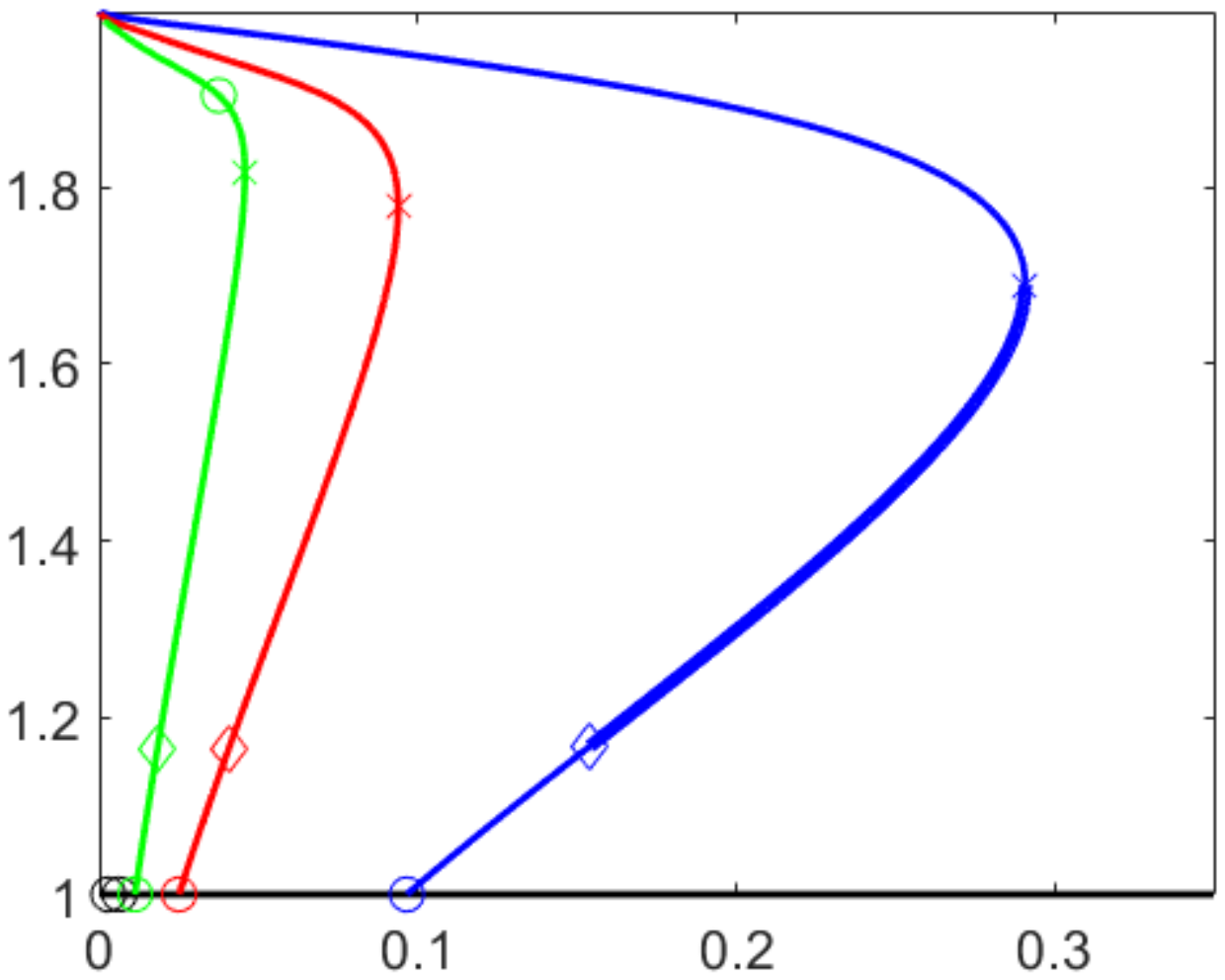}
\put(0,30){\rotatebox{90}{$||u||_{L_2}$}}
\put(85,-2){$d$}
\end{overpic} 
}
\subfloat[\label{strong_d21_3}$d_{21}=3$]{
\begin{overpic}[width=0.5\textwidth,tics=10,trim={3cm 8.5cm 3cm 9cm},clip]{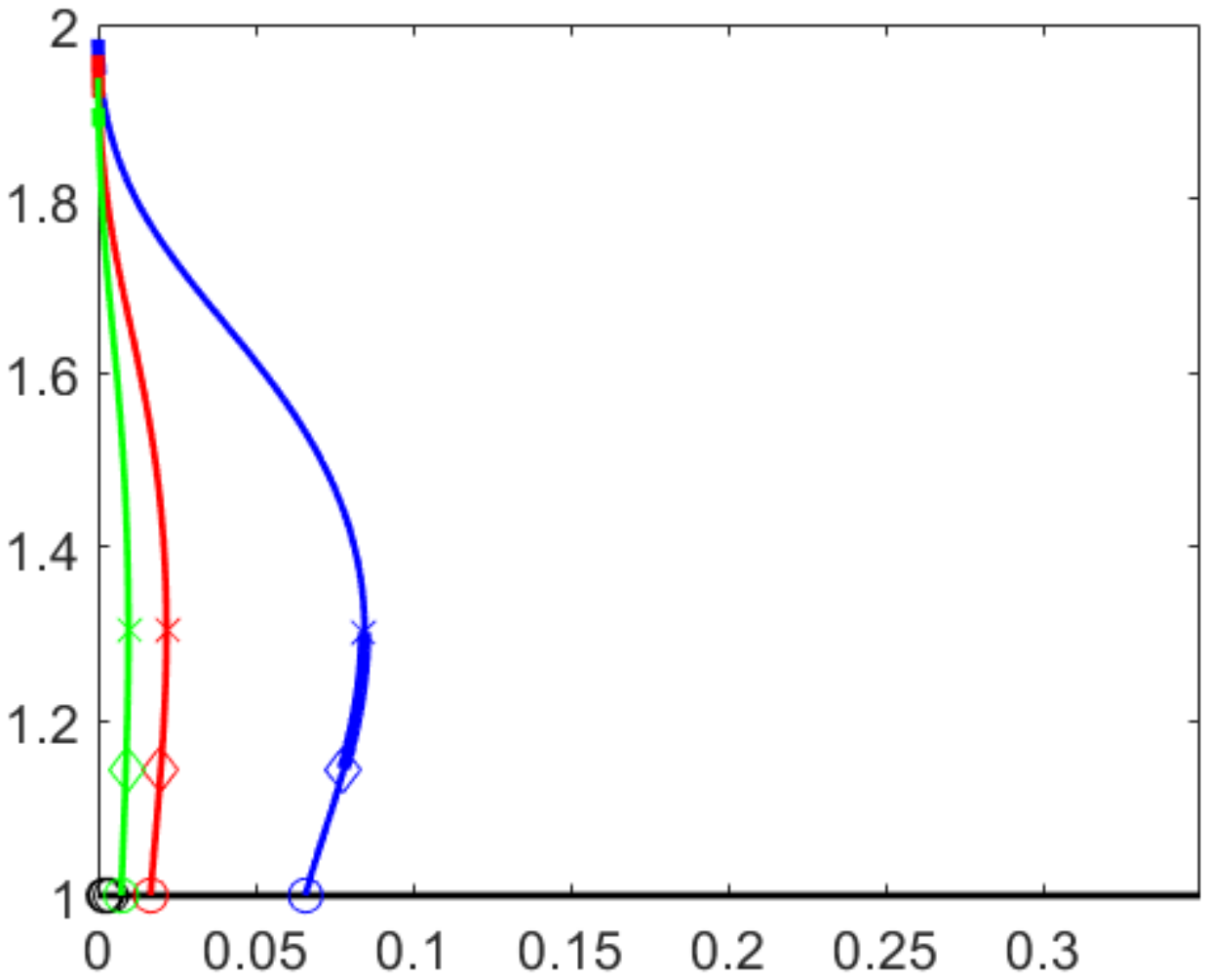}
\put(0,30){\rotatebox{90}{$||u||_{L_2}$}}
\put(85,-2){$d$}
\end{overpic} 
}\\
\subfloat[\label{strong_d21_6}$d_{21}=6$ ]{
\begin{overpic}[width=0.5\textwidth,tics=10,trim={3cm 8.5cm 3cm 9cm},clip]{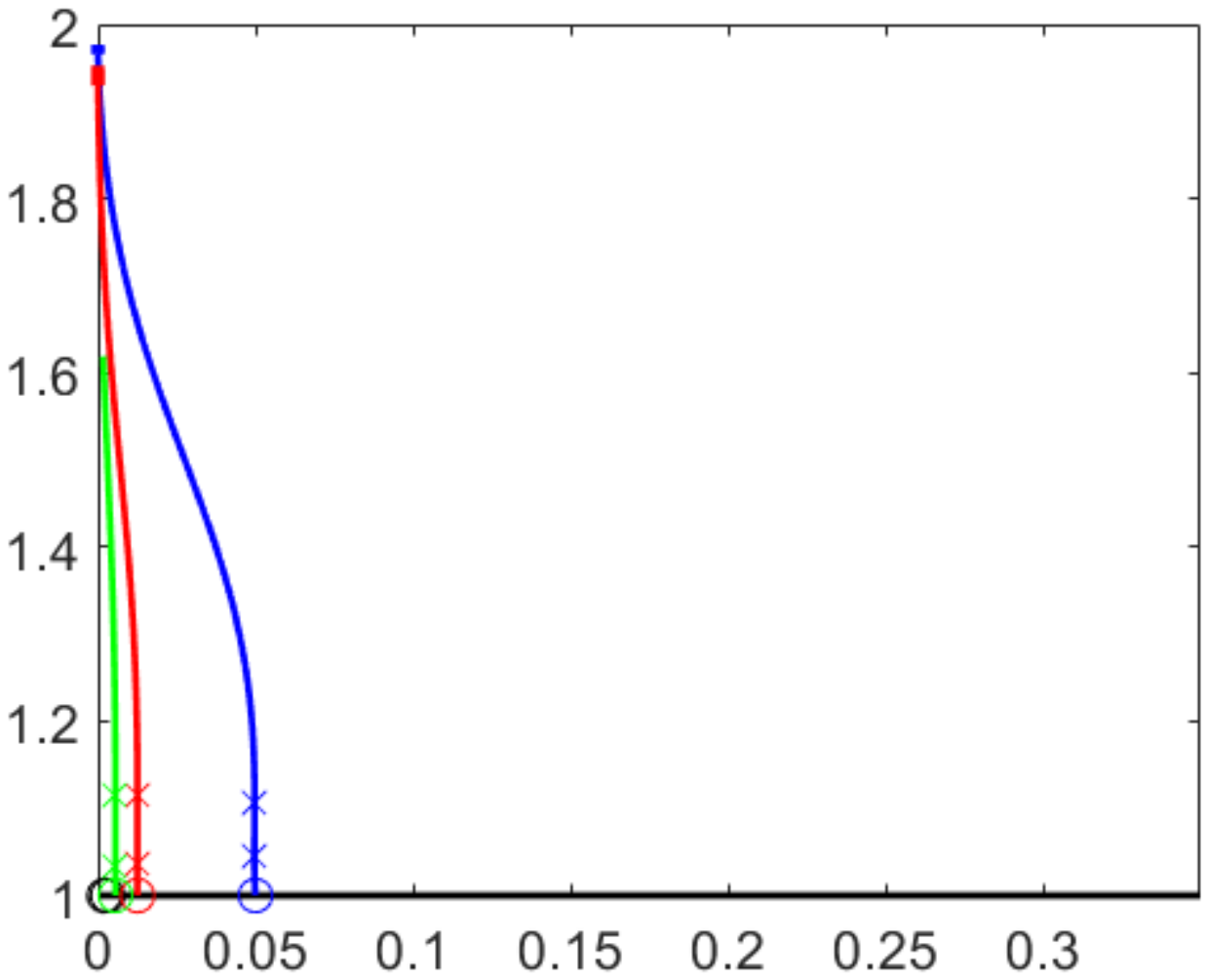}
\put(0,30){\rotatebox{90}{$||u||_{L_2}$}}
\put(85,-2){$d$}
\end{overpic} 
}
\subfloat[\label{strong_d21_20}$d_{21}=20$]{
\begin{overpic}[width=0.5\textwidth,tics=10,trim={3cm 8.5cm 3cm 9cm},clip]{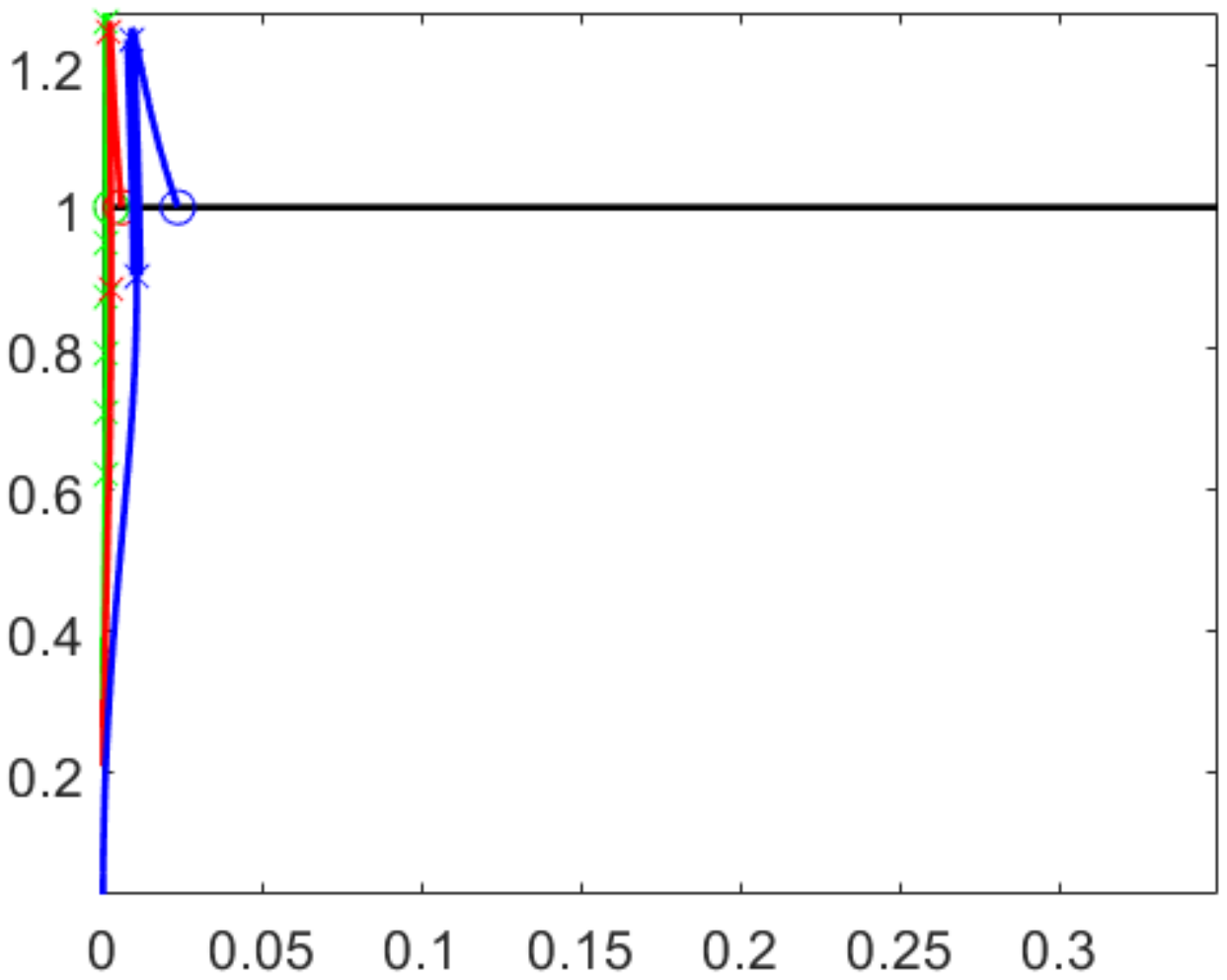}
\put(0,30){\rotatebox{90}{$||u||_{L_2}$}}
\put(85,-2){$d$}
\end{overpic} 
}\\
\subfloat[\label{strong_d21_100}$d_{21}=100$]{
\begin{overpic}[width=0.5\textwidth,tics=10,trim={3cm 8.5cm 3cm 9cm},clip]{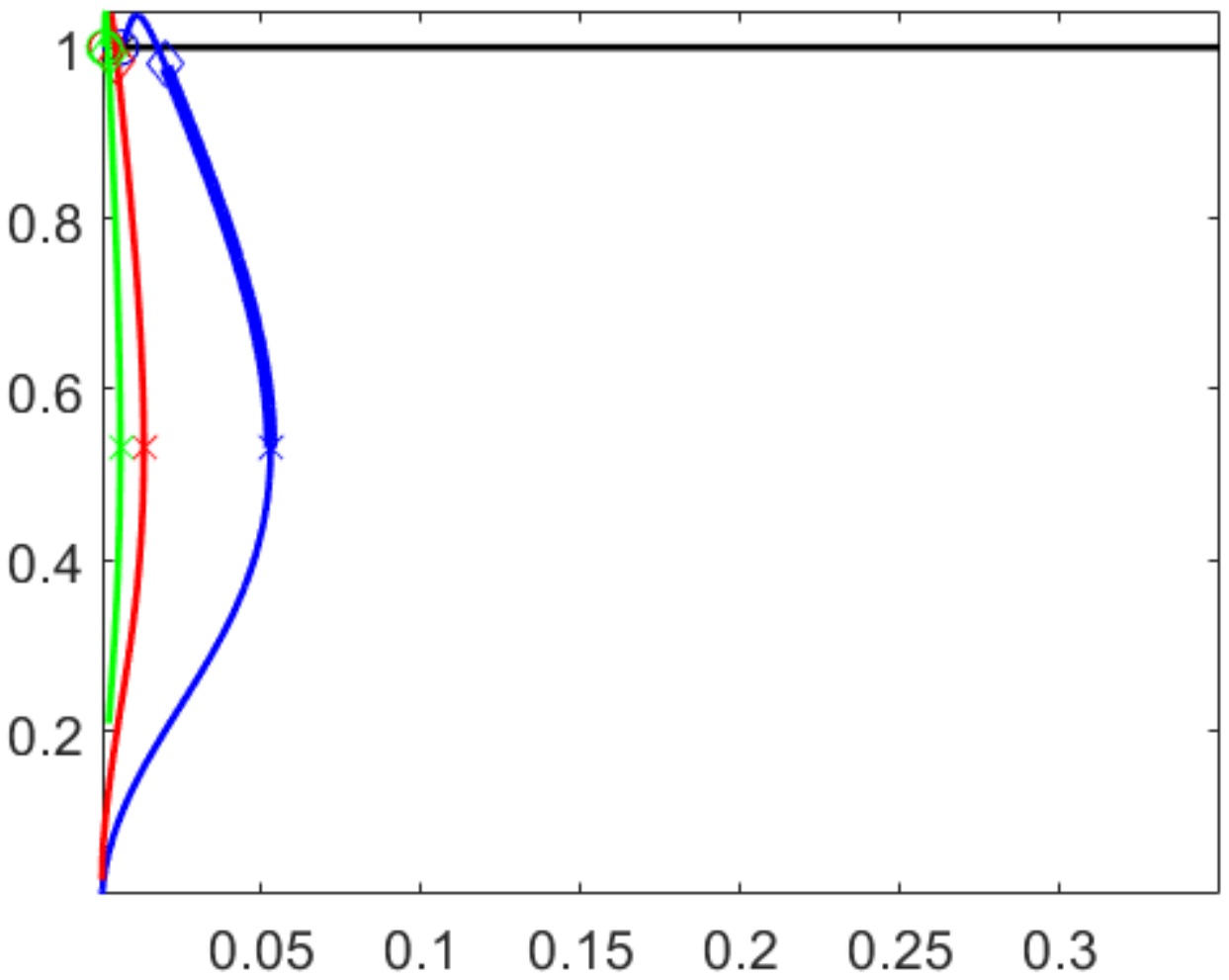}
\put(0,30){\rotatebox{90}{$||u||_{L_2}$}}
\put(85,-2){$d$}
\end{overpic} 
}
\subfloat[\label{strong_d21_1000}$d_{21}=1000$]{
\begin{overpic}[width=0.5\textwidth,tics=10,trim={3cm 8.5cm 3cm 9cm},clip]{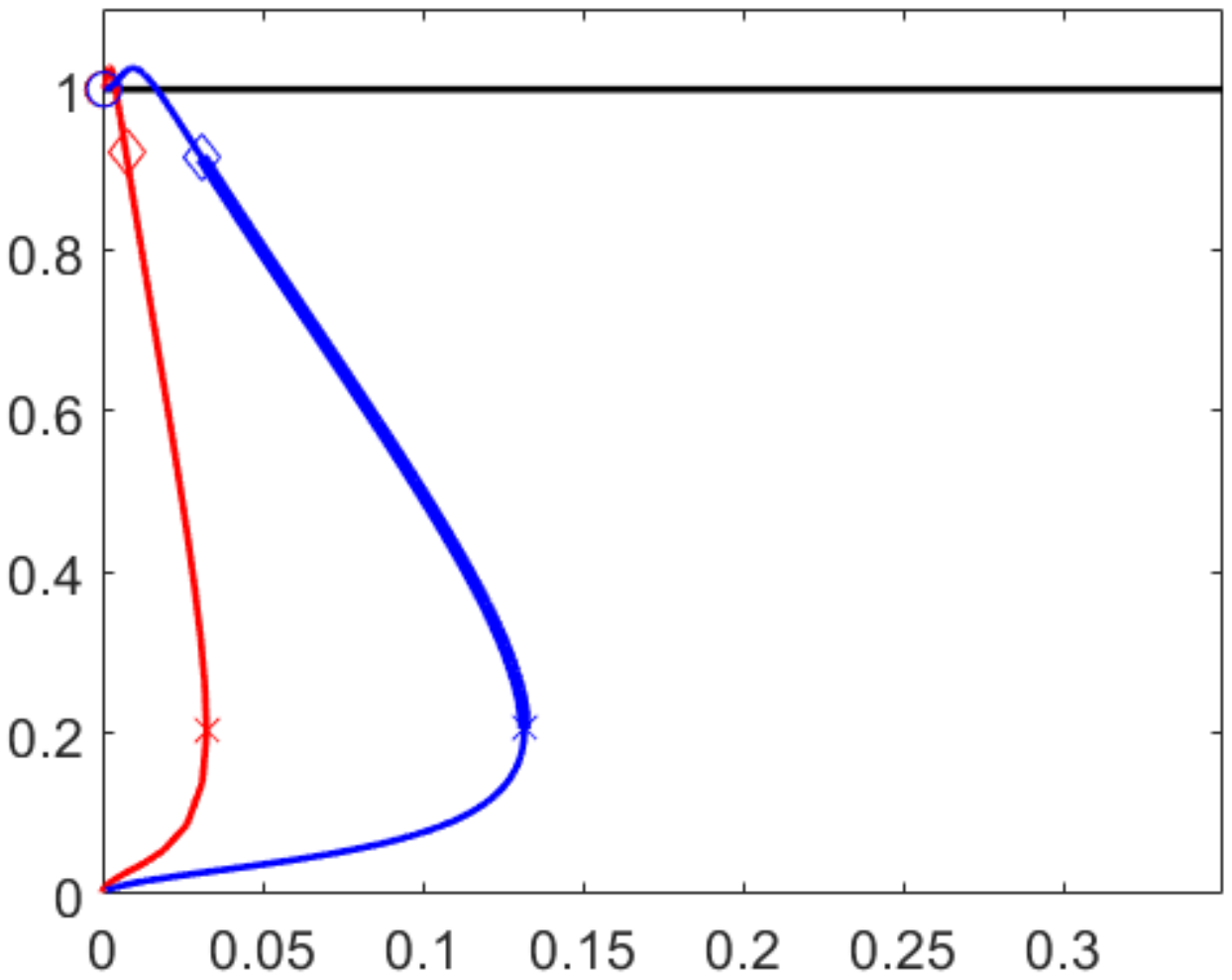}
\put(0,30){\rotatebox{90}{$||u||_{L_2}$}}
\put(85,-2){$d$}
\end{overpic} 
}
\caption{Bifurcation diagrams for different values of the cross-diffusion parameter $d_{21}$ in the strong-competition case (second parameter set in Table \ref{tab:param}, with $d_{12} =3$).
}
\label{strong_BD_d21}
\end{figure}

\begin{figure}[!ht]
\centering
\begin{overpic}[width=0.6\textwidth,tics=10,trim={3cm 8cm 3cm 8cm},clip]{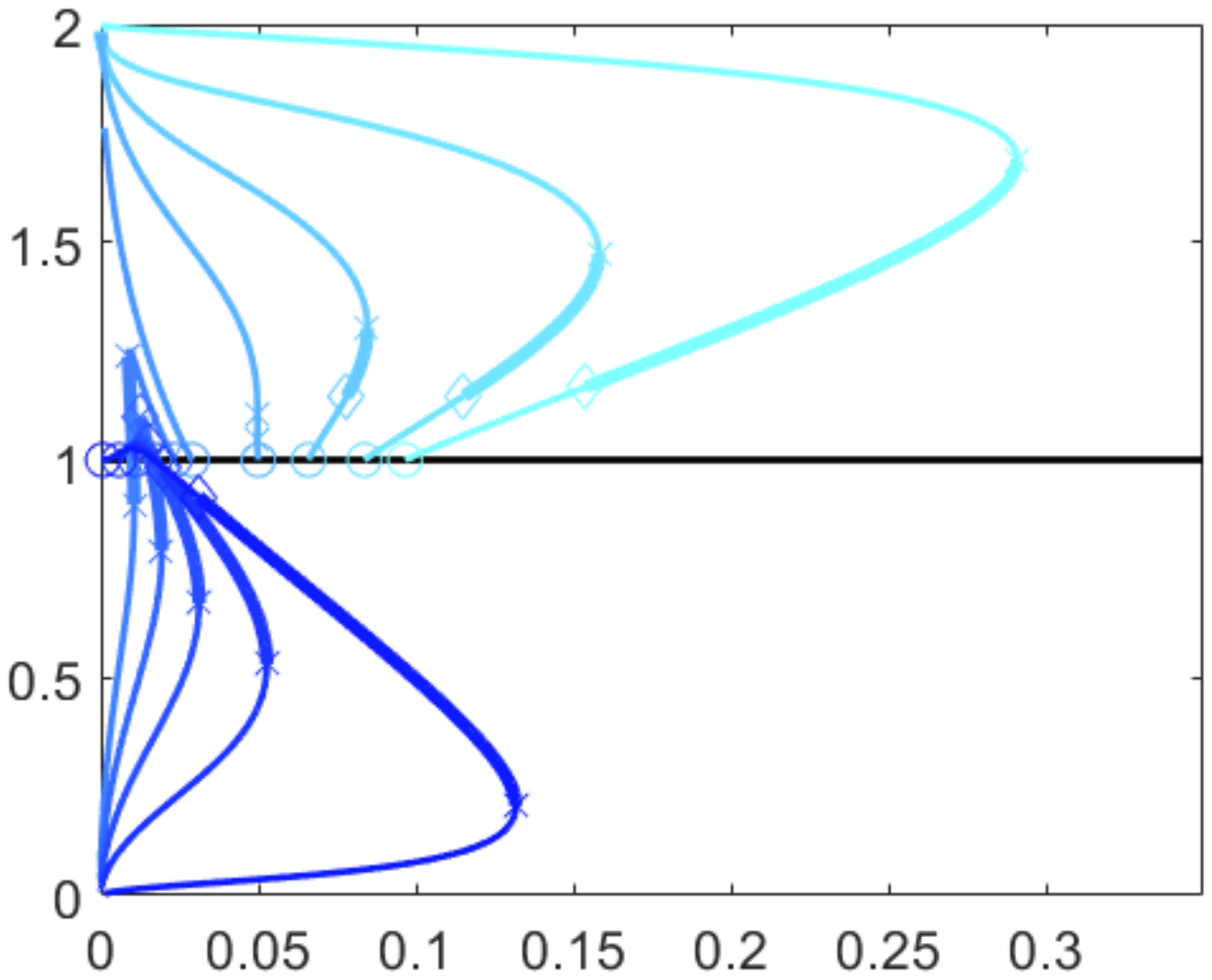}
\put(0,30){\rotatebox{90}{$||u||_{L_2}$}}
\put(85,4){$d$}
\end{overpic} 
\caption{Evolution of the first bifurcation branch for increasing values of $d_{21}\in [0,1000]$ (darker the blue, greater the value), corresponding to the second parameter set in Table \ref{tab:param}, with $d_{12}=3$.}
\label{strong_mimura_d12increasing}
\end{figure}


\subsection{Third parameter set in Table~\ref{tab:param}: existence of non homogeneous solutions in the weak competition case when both cross-diffusion parameters are large}
	
The third parameter set corresponds to $\alpha+\beta>0$ and it illustrates Theorem~\ref{th:large_cd}, showing that non-homogeneous steady states exist when both cross-diffusion coefficients are large and qualitatively similar. Figure \ref{AlphaPlusBeta} shows the bifurcation diagram with large cross-diffusion coefficients ($d_{\textnormal{cross}}=d_{12}=d_{21}=100$) and some solutions. In particular, stable non-homogeneous solutions arise on the first branch.

\begin{figure}[!ht]
\centering
\subfloat[\label{alphabeta_BD}]{
\begin{overpic}[width=0.6\textwidth,tics=10,trim={3cm 8cm 3cm 8cm},clip]{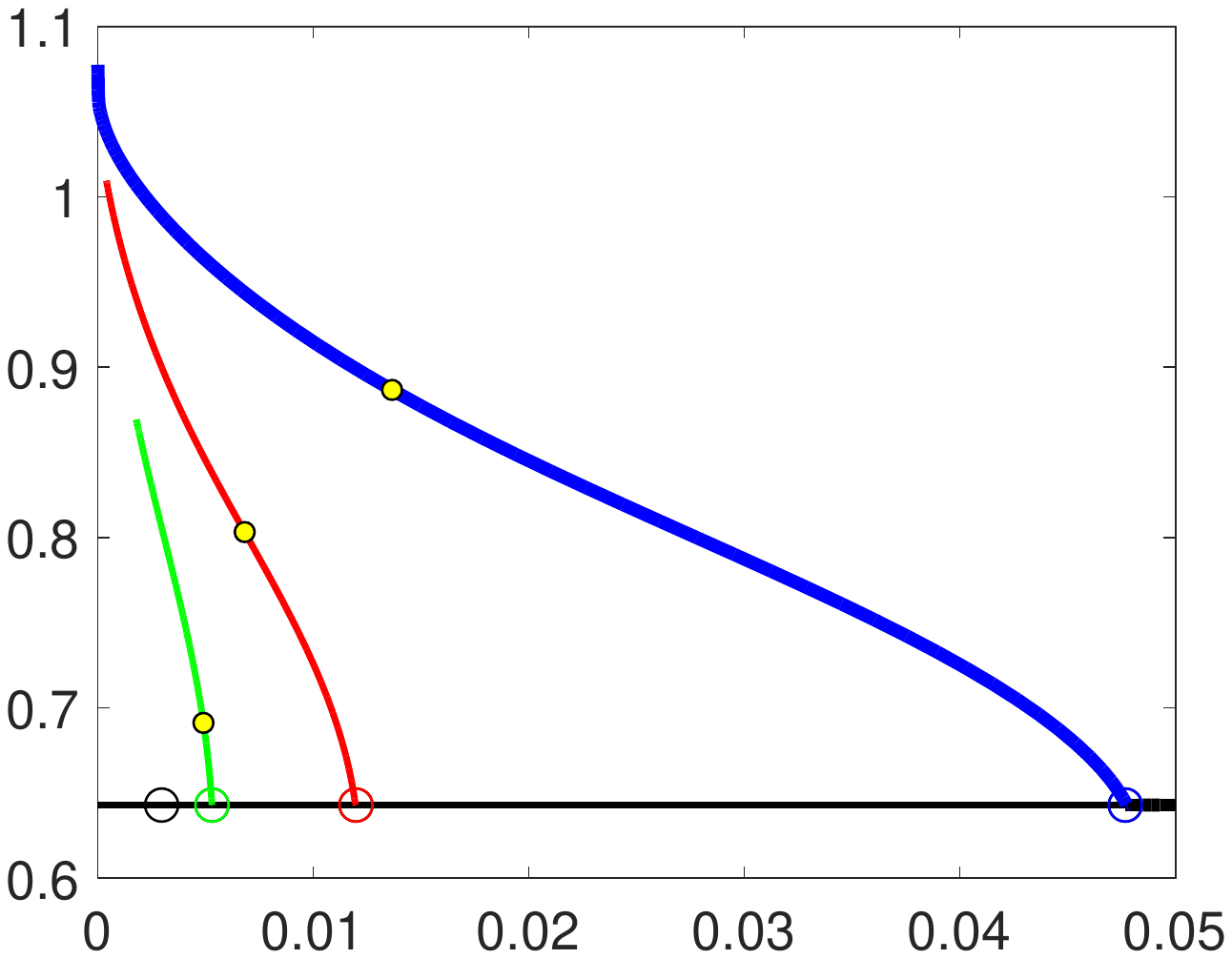}
\put(0,30){\rotatebox{90}{$||u||_{L_2}$}}
\put(80,2){$d$}
\put(34,46){$(b)$}
\put(26,36){$(c)$}
\put(23,22){$(d)$}
\end{overpic} 
}\\
\hspace{-0.7cm}
\subfloat[\label{alphabeta_sol1}]{
\begin{overpic}[width=0.35\textwidth,tics=10,trim={3cm 9cm 3cm 9cm},clip]{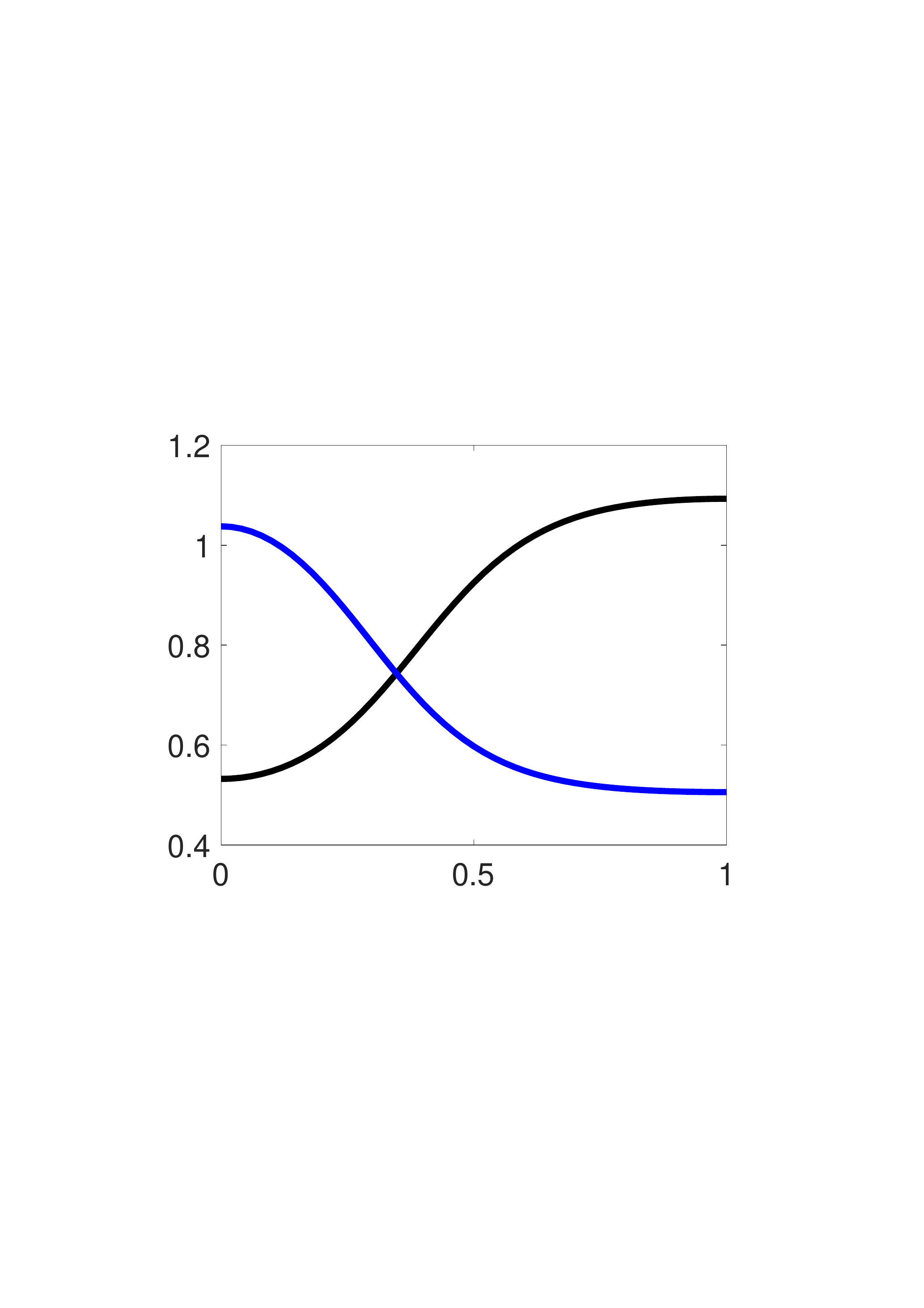}
\put(0,30){\rotatebox{90}{$u,v$}}
\put(75,0){$x$}
\end{overpic} 
}
\hspace{-0.7cm}
\subfloat[\label{alphabeta_sol2}]{
\begin{overpic}[width=0.35\textwidth,tics=10,trim={3cm 9cm 3cm 9cm},clip]{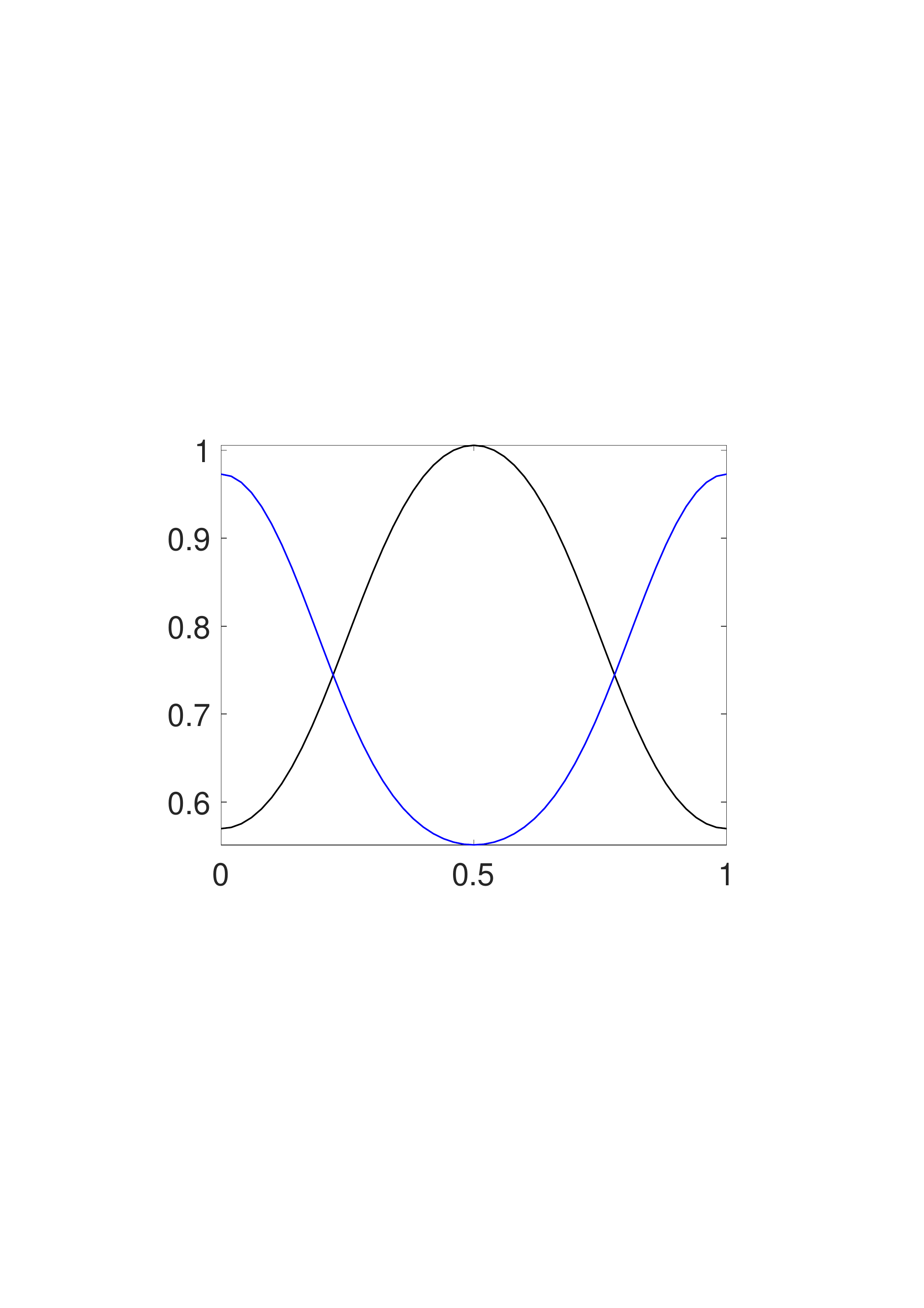}
\put(0,30){\rotatebox{90}{$u,v$}}
\put(75,0){$x$}
\end{overpic} 
}
\hspace{-0.7cm}
\subfloat[\label{alphabeta_sol3}]{
\centering
\begin{overpic}[width=0.35\textwidth,tics=10,trim={3cm 9cm 3cm 9cm},clip]{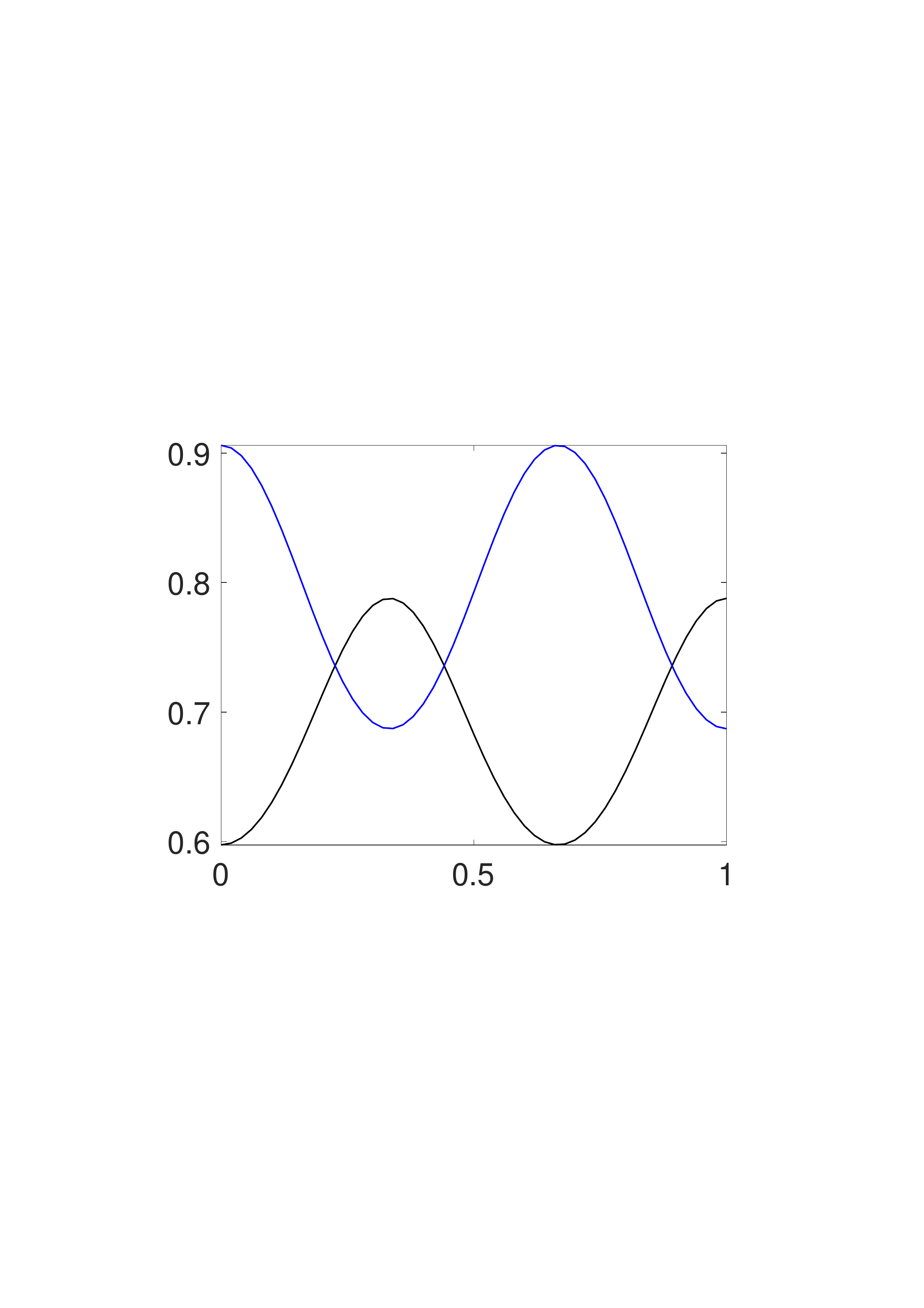}
\put(0,30){\rotatebox{90}{$u,v$}}
\put(75,0){$x$}
\end{overpic} 
}
\caption{Bifurcation diagram and some solutions (third parameter set in Table \ref{tab:param} and $d_{12}~=~d_{21}=100$). (a) Bifurcation diagram with respect to the parameter $d$. Yellow points indicate the positions on the branches at which solutions are shown. (b)--(d) Solutions $u(x),\;v(x)$ for different values of $d$ (black lines correspond to species $u$, while blue lines to species $v$). 
}
\label{AlphaPlusBeta}
\end{figure}


\subsection{Fourth parameter set in Table~\ref{tab:param}: changes of stability in the strong competition regime}

The fourth parameter set in Table \ref{tab:param} corresponds to the strong competition case, with $\beta>0$ in order to select a more generic case than the one considered in~\cite{izuhara2008reaction}. Note that in the strong competition case, there is already a full bifurcation diagram without cross-diffusion, but it does not contain any stable solutions. Increasing the cross-diffusion coefficient $d_{12}$ (meaning that we put cross-diffusion in the system), we can observe when/how stable solutions appear.  In Figure \ref{BD_4_d12increasing} the evolution of the first branch  for increasing values of $d_{12}\in [0,3]$ and $d_{21}=0$ is shown: the lightest blue corresponds to $d_{12}=0$, the darkest to $d_{12}=3$, while as usual the homogeneous solution is denoted in black. We observe that the first bifurcation point on the homogeneous branch decreases as $d_{12}$ is increasing, while the branch tends to fold. In particular, looking at stable solutions, we can see that the system needs enough cross-diffusion to admit stable non-homogeneous solutions. In addition, these regions arises first between two successive fold points, and then between a Hopf and a fold. The same considerations apply to the other branches.

\begin{figure}[!ht]
\centering
\begin{overpic}[width=0.6\textwidth,tics=10,trim={3cm 8cm 3cm 8cm},clip]{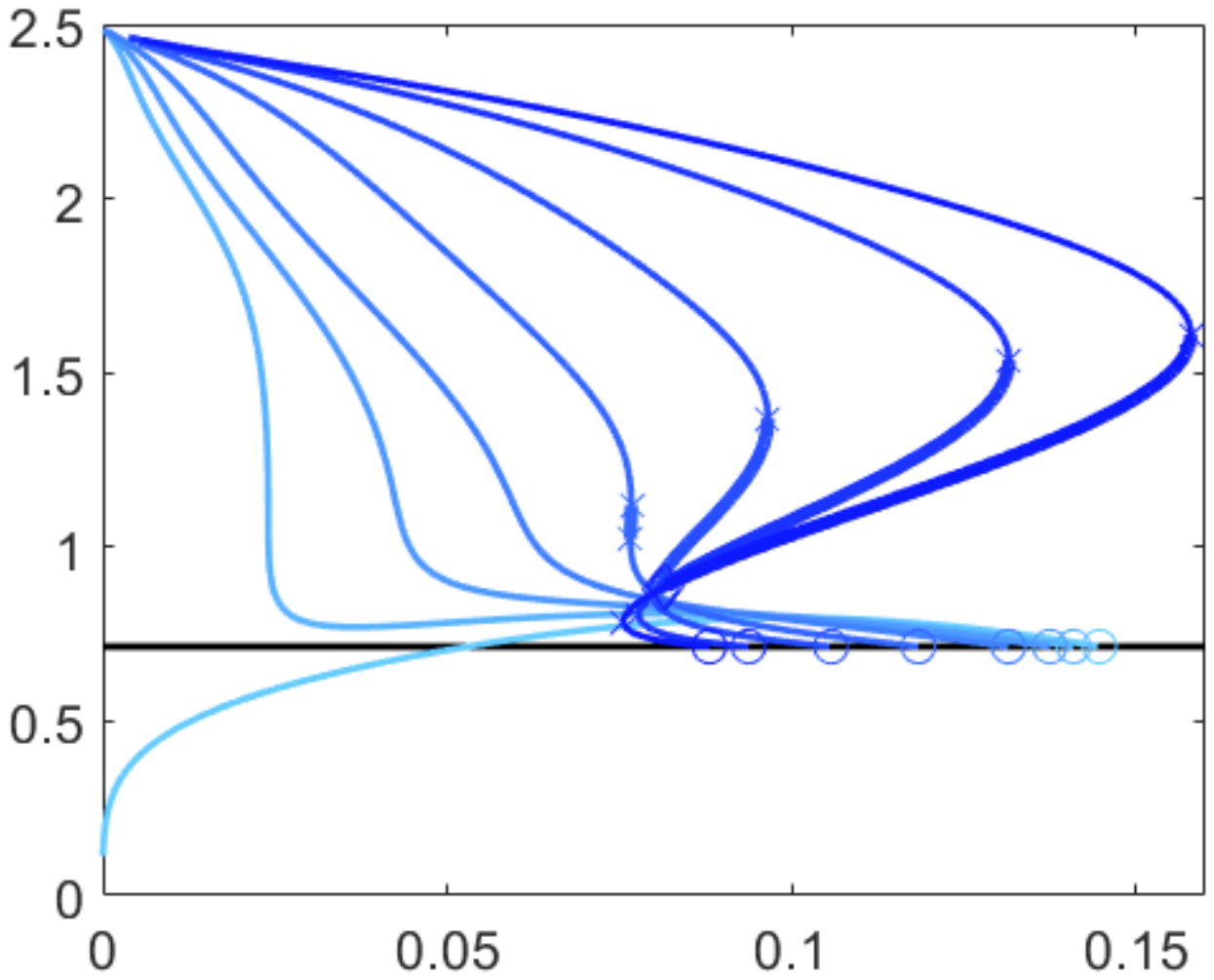}
\put(0,30){\rotatebox{90}{$||u||_{L_2}$}}
\put(90,10){$d$}
\end{overpic} 
\caption{Evolution of the first bifurcation branch for increasing values of $d_{12}\in [0,3]$ (darker the blue, greater the value), corresponding to the fourth parameter set in Table \ref{tab:param}, with $d_{21}=0$.}
\label{BD_4_d12increasing}
\end{figure}

\section{Beyond the usual weak and strong competition regime}\label{sec:r1}

The reason why~\eqref{weak_comp} and~\eqref{strong_comp} are sometimes referred to as the \textit{strong intra-specific competition} regime and the \textit{strong inter-specific competition} regime, is because they respectively imply $b_1b_2<a_1a_2$ and $a_1a_2<b_1b_2$ (we recall that the $a_i$ represent the intra-specific competition rates, whereas the $b_i$ represent the inter-specific competition rates). These conditions on the competition rates are usually supplemented with conditions on the growth rates $r_i$ (as those in~\eqref{weak_comp} and~\eqref{strong_comp}), ensuring that the homogeneous steady state $(u_*,v_*)$ is positive, and thus that it can be used as a starting point for the analysis. 

However, we believe that these conditions on the growth rates are not necessarily meaningful when spatial inhomogeneities are accounted for in the model. Indeed, we report here some numerical experiments suggesting that, when cross-diffusion is taken into account, there exist positive and stable non-homogeneous steady states of~\eqref{cross} outside of the range of parameters for which $(u_*,v_*)$ is positive.

We present in Figure \ref{BD_r1} bifurcation diagrams with respect to the parameter $r_1$ (the growth rate of species $u$), in the weak and strong competition cases, where positive non-homogeneous steady states appear outside of the range of parameters where $(u_*,v_*)$ is admissible. They are obtained in the triangular case ($d_{21}=0$), using the first and second parameter sets reported in Table \ref{tab:param} except for $r_1$ which is varied, and fixing the value of the diffusion coefficient ($d=0.005$ in the weak competition case and $d=0.05$ in the strong competition case).

In Figure \ref{BDr1_weak}, there are stable non-homogeneous solutions when $r_1>6$, that is outside of the range of parameters for which $(u_*,v_*)$ is positive; recall that we are in the usual weak competition regime~\eqref{weak_comp} if and only if $r_1$ belongs to $(2/3,6)$. Moreover those solutions, reported in Figures \ref{r1_weak_sol_red}, \ref{r1_weak_sol_green} for $r_1=7.5$ and in Figure \ref{r1_weak_sol_green_15} for $r_1=15$, are qualitative similar to the others already obtained in the previous sections. In Figure \ref{BDr1_strong},  we are in the usual strong competition regime~\eqref{weak_comp} if and only if $r_1$ belongs to $(2/3,5/2)$. We can see that also in this case the system admits stable non-homogeneous steady states (up to four) where the homogeneous steady state $(u_*,v_*)$ is no longer meaningful. Note also that the red branch, corresponding to the second (red) branch in Figure \ref{v0-L2}, can become stable. Stable solutions are shown in Figures \ref{r1_strong_b1sol}, \ref{r1_strong_b2sol} with $r_1=10$ and Figure \ref{r1_strong_b1sol_18} for $r_1=18$.

\begin{figure}[!ht]
\centering
\subfloat[\label{BDr1_weak}]{
\begin{overpic}[width=0.5\textwidth,tics=10,trim={3cm 8cm 3cm 8cm},clip]{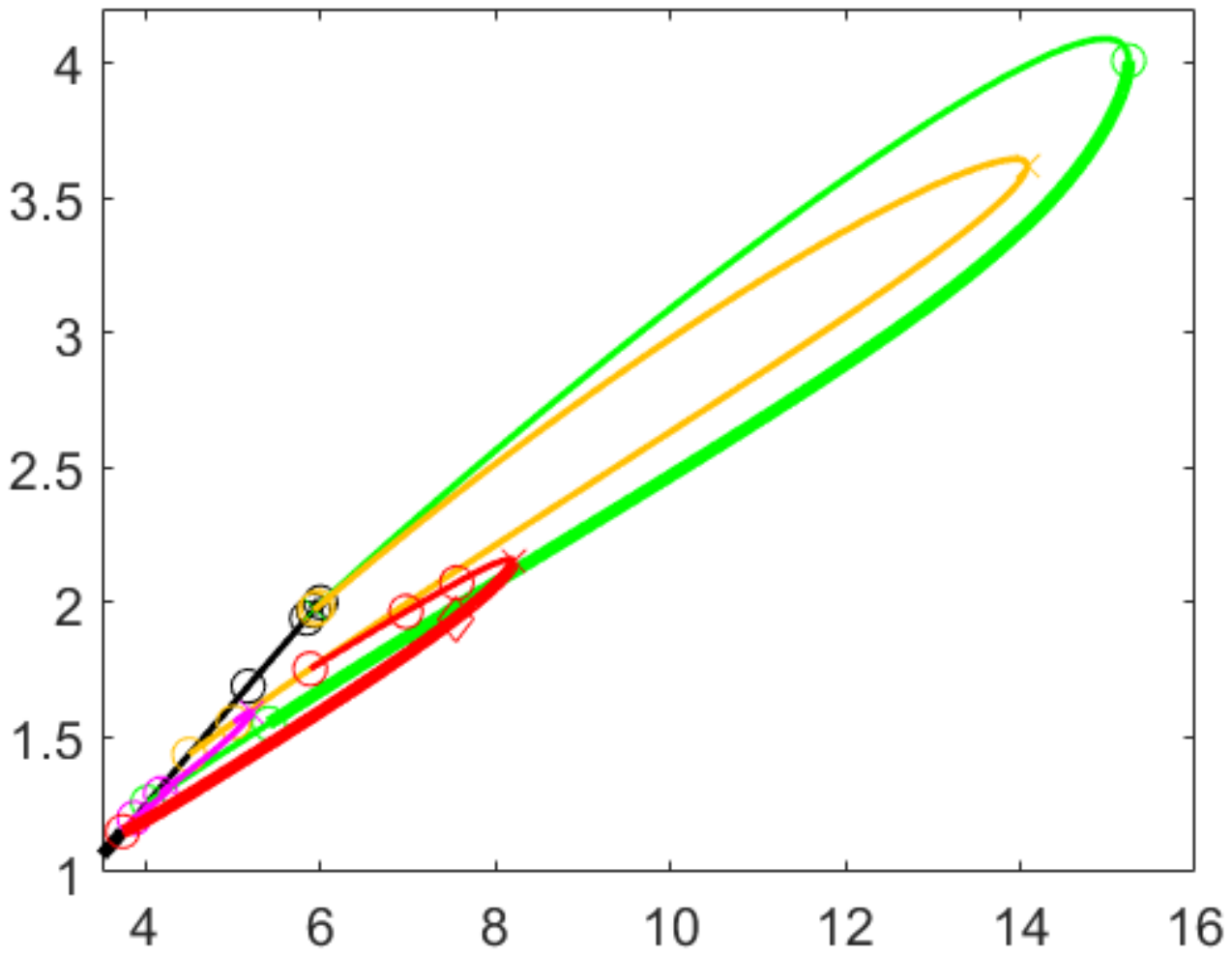}
\put(80,0){$r_1$}
\put(0,30){\rotatebox{90}{$||u||_{L_2}$}}
\end{overpic} 
}
\subfloat[\label{BDr1_strong}]{
\begin{overpic}[width=0.5\textwidth,tics=10,trim={3cm 8cm 3cm 8cm},clip]{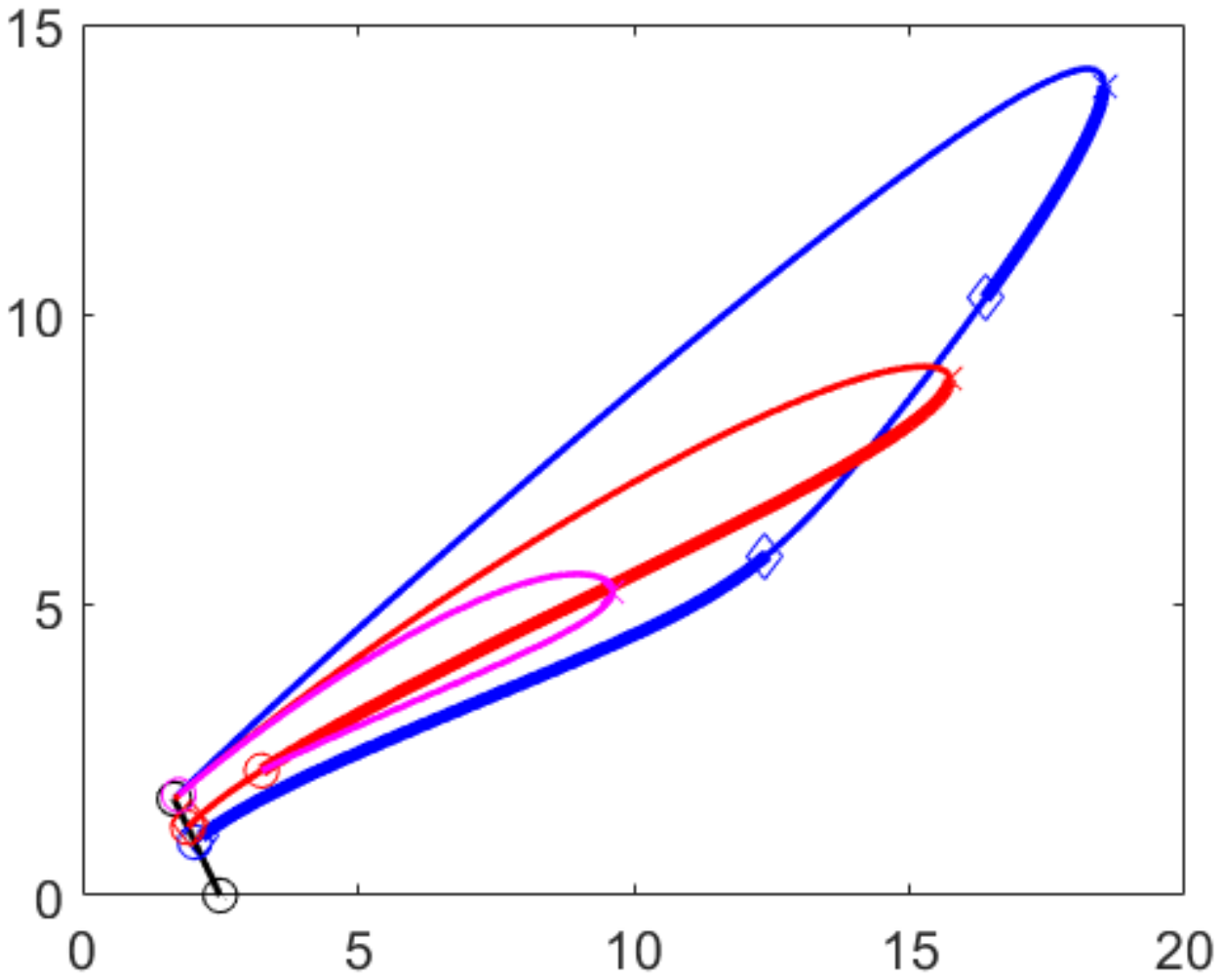}
\put(80,0){$r_1$}
\put(2,30){\rotatebox{90}{$||u||_{L_2}$}}
\end{overpic} 
}

\caption{Bifurcation diagrams with bifurcation parameter $r_1$. (a) Weak competition case with $d=0.005$ and the other parameter values as in the first parameter set in Table \ref{tab:param}, with $d_{12}=3$ and $d_{21}=0$. (b) Strong competition case with $d=0.05$ and the other parameter values as in the second parameter set in Table \ref{tab:param}, with $d_{12}=3$ and $d_{21}=0$.}
\label{BD_r1}
\end{figure}

\begin{figure}[!ht]
\centering
\subfloat[\label{r1_weak_sol_red} $r_1=7.5$, red branch]{
\begin{overpic}[width=0.4\textwidth,tics=10,trim={3cm 8cm 3cm 8cm},clip]{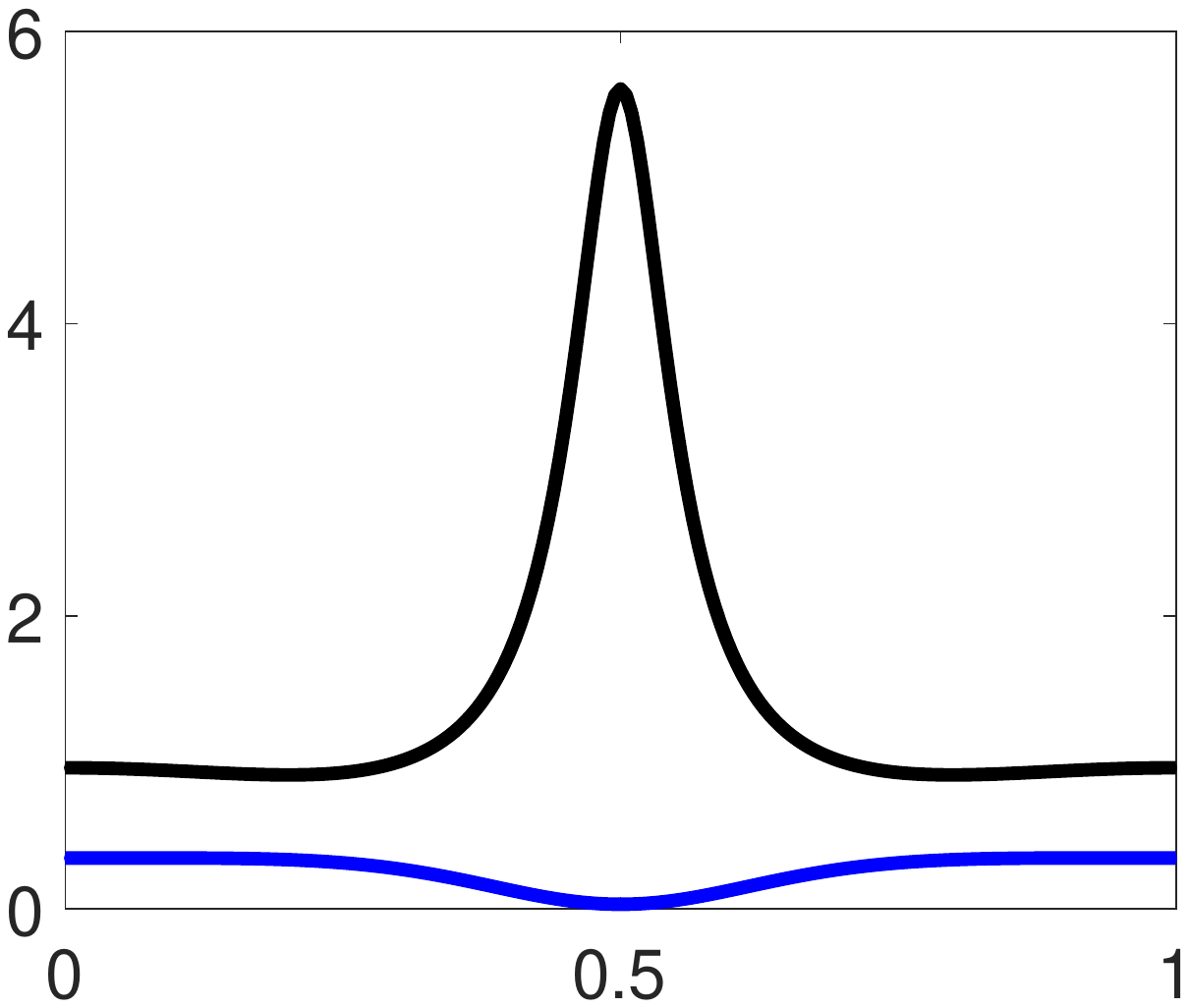}
\put(75,0){$x$}
\put(0,30){\rotatebox{90}{$u,\;v$}}
\end{overpic} 
}
\subfloat[\label{r1_strong_b1sol} $r_1=10$, blue branch]{
\begin{overpic}[width=0.4\textwidth,tics=10,trim={3cm 8cm 3cm 8cm},clip]{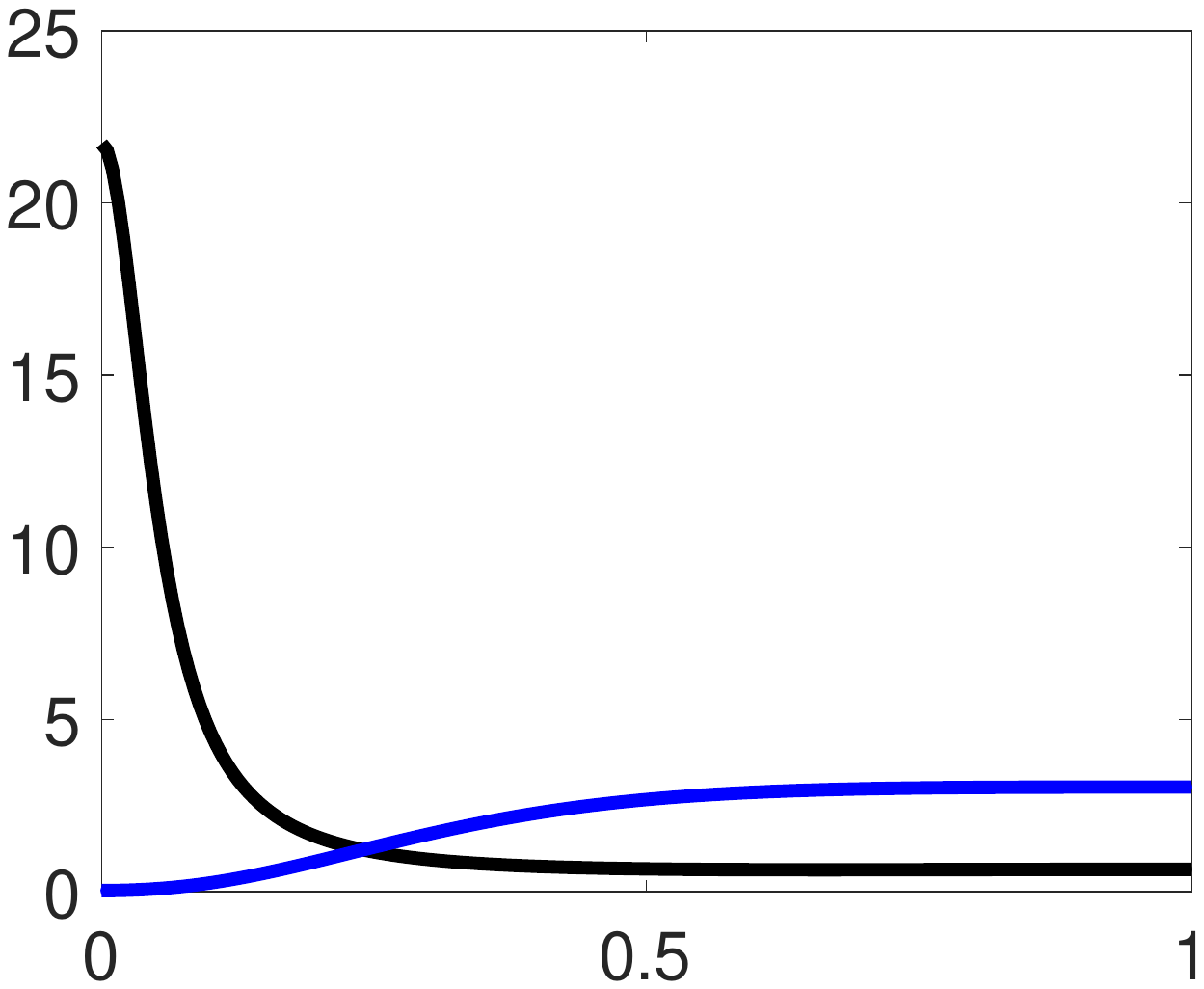}
\put(75,0){$x$}
\put(2,30){\rotatebox{90}{$u,\;v$}}
\end{overpic} 
}\\

\subfloat[\label{r1_weak_sol_green}$r_1=7.5$, green branch]{
\begin{overpic}[width=0.4\textwidth,tics=10,trim={3cm 8cm 3cm 8cm},clip]{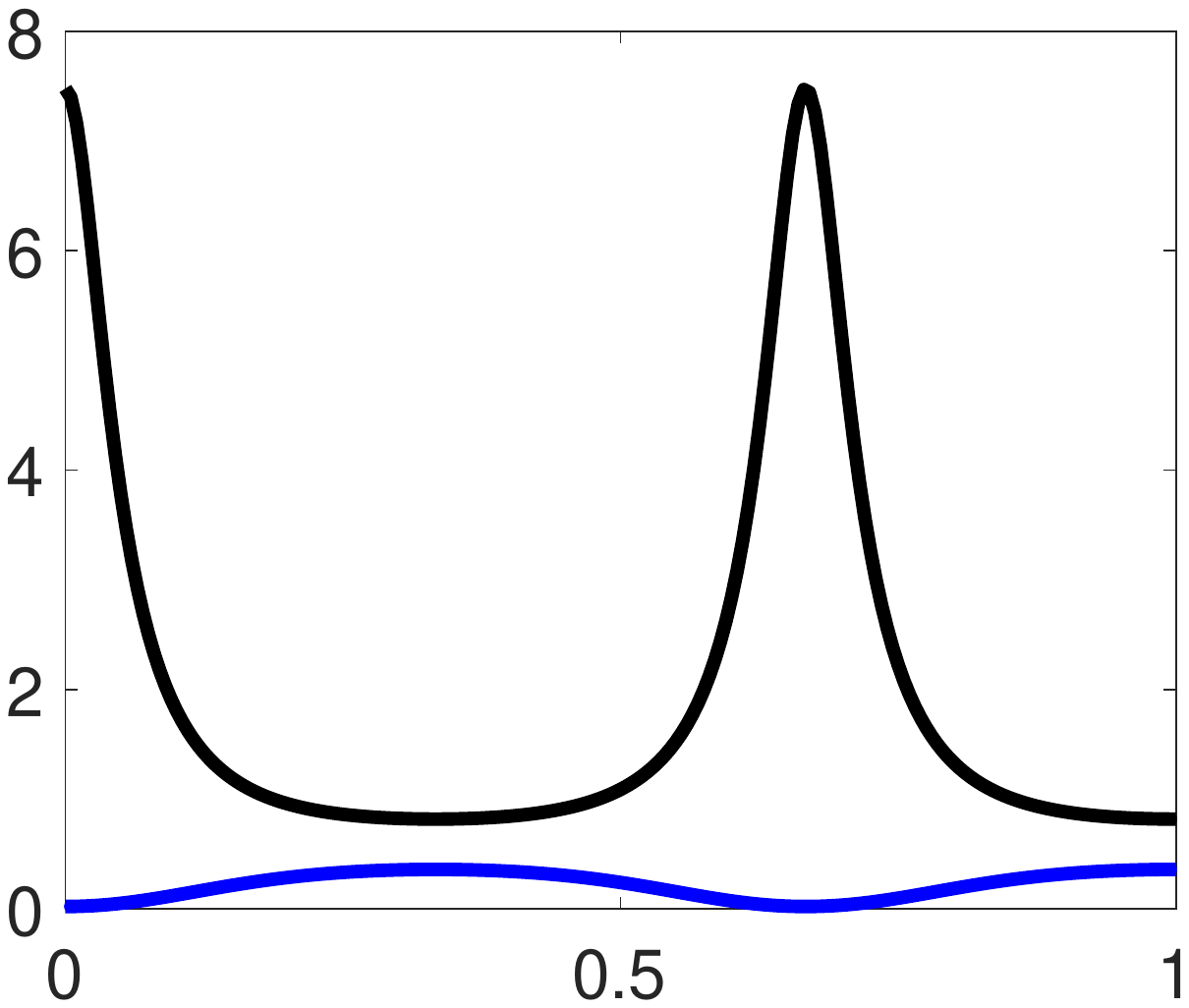}
\put(75,0){$x$}
\put(0,30){\rotatebox{90}{$u,\;v$}}
\end{overpic} 
}
\subfloat[\label{r1_strong_b2sol}$r_1=10$, red branch]{
\begin{overpic}[width=0.4\textwidth,tics=10,trim={3cm 8cm 3cm 8cm},clip]{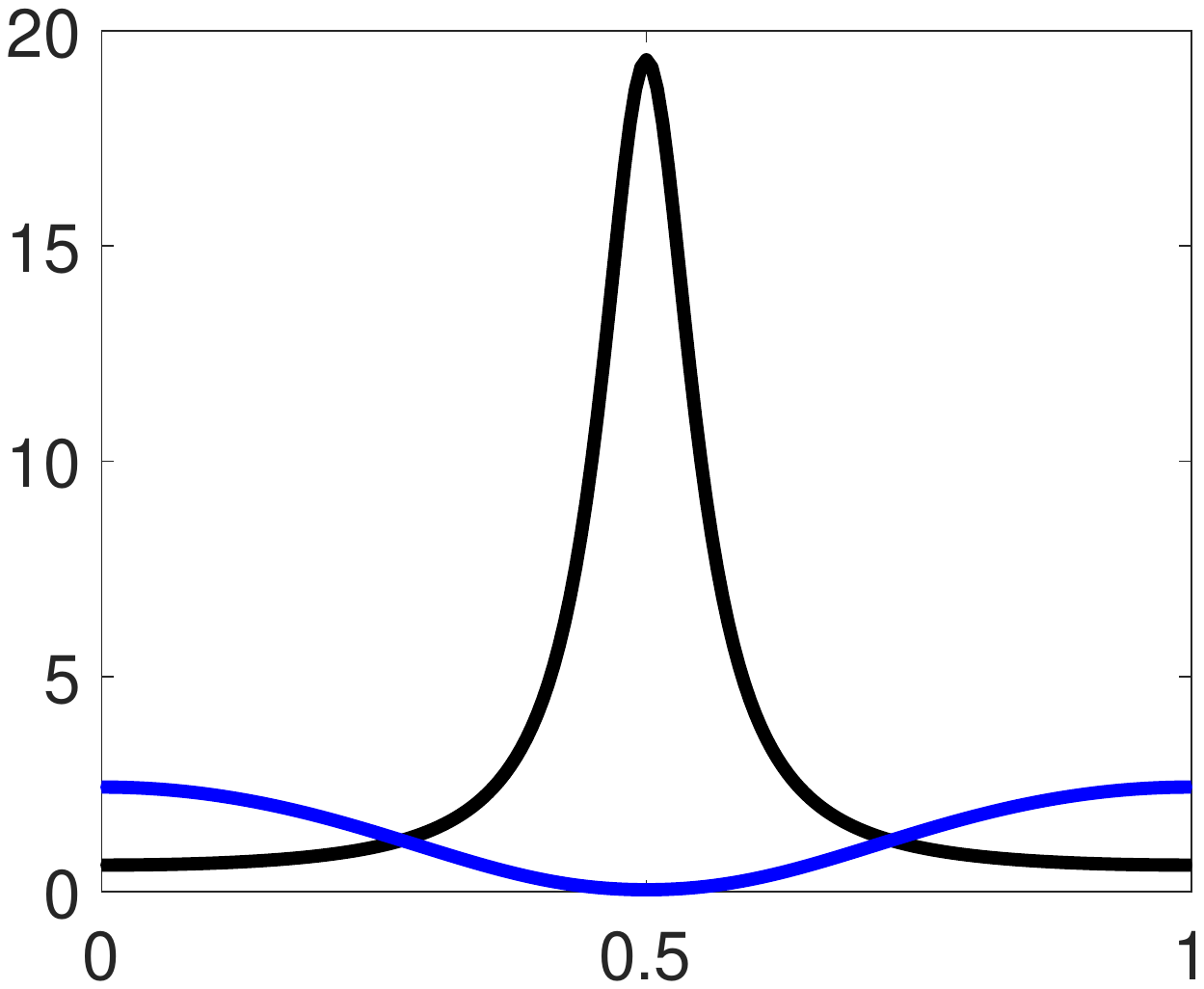}
\put(75,0){$x$}
\put(2,30){\rotatebox{90}{$u,\;v$}}
\end{overpic} 
}

\subfloat[\label{r1_weak_sol_green_15}$r_1=15$, green branch]{
\begin{overpic}[width=0.4\textwidth,tics=10,trim={3cm 8cm 3cm 8cm},clip]{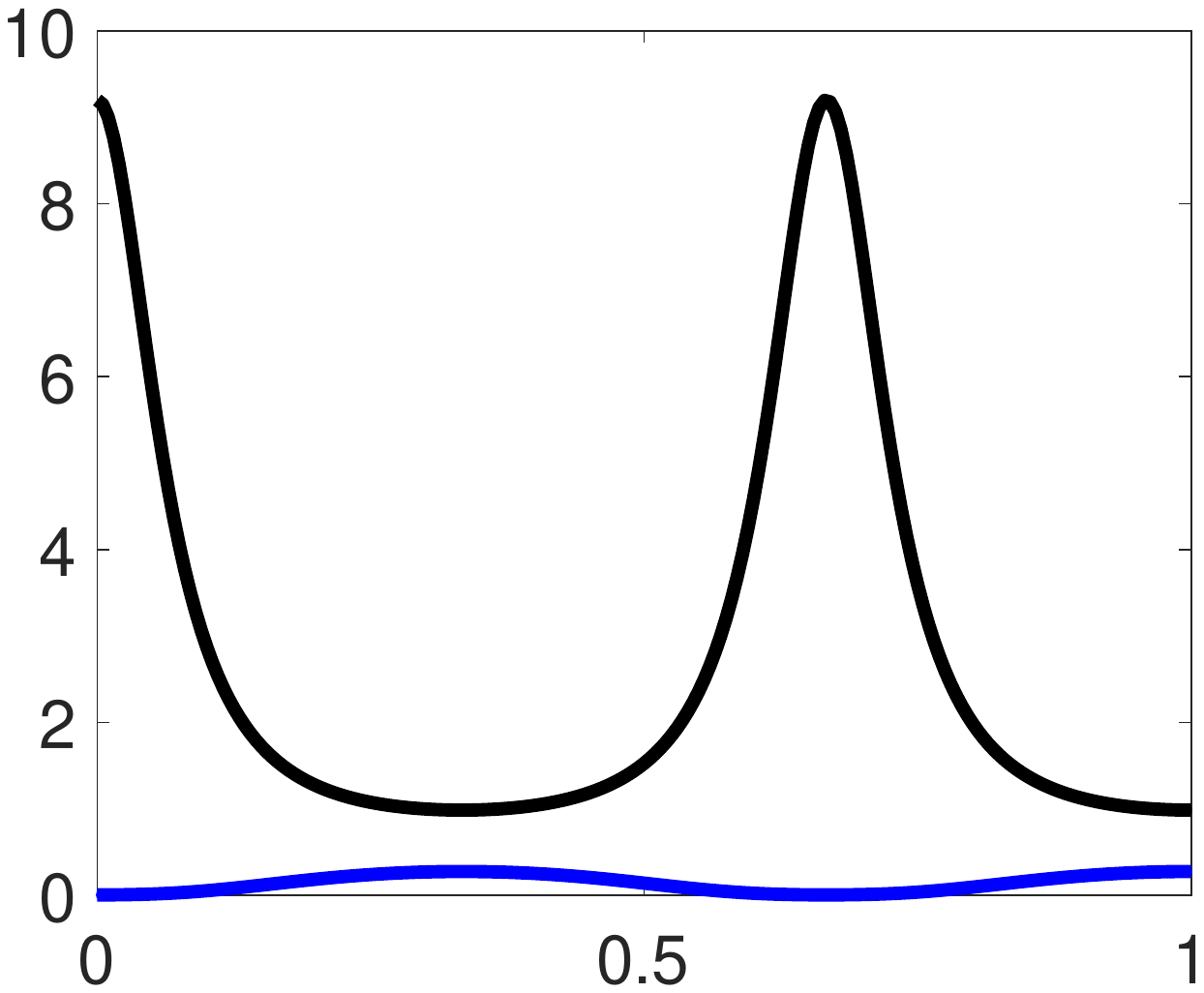}
\put(75,0){$x$}
\put(0,30){\rotatebox{90}{$u,\;v$}}
\end{overpic} 
}
\subfloat[\label{r1_strong_b1sol_18}$r_1=18$, blue branch]{
\begin{overpic}[width=0.4\textwidth,tics=10,trim={3cm 8cm 3cm 8cm},clip]{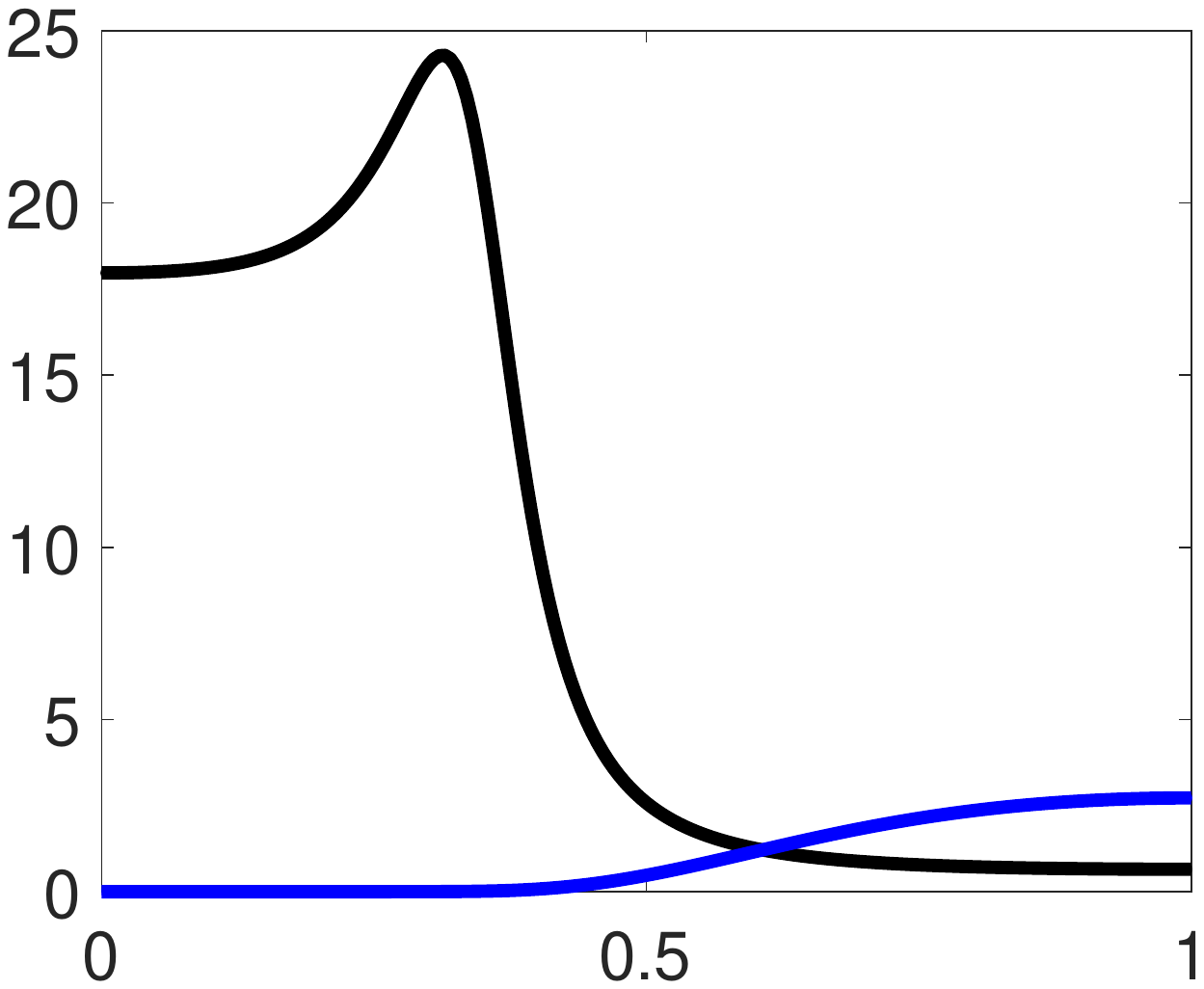}
\put(75,0){$x$}
\put(2,30){\rotatebox{90}{$u,\;v$}}
\end{overpic} 
}

\caption{Stable non-homogeneous solutions appearing beyond the usual weak or strong competitions regimes (species $u$ and $v$ are denoted with black and blue lines respectively). Weak competition case ($d=0.005$, the other parameter values as in the first parameter set in Table \ref{tab:param}, with $d_{12}=3$ and $d_{21}=0$): (a), (c) stable solutions with $r_1=7.5$, (e) stable solution with $r_1=15$. Strong competition case ($d=0.05$, the other parameter values as in the second parameter set in Table \ref{tab:param}, with $d_{12}=3$ and $d_{21}=0$): (b), (d) stable solutions with $r_1=10$, (e) stable solution with $r_1=18$. The colors of the branches refer to Figure~\ref{BD_r1}.}
\label{BD_r1_sol}
\end{figure}

\section{Changes induced by self-diffusion}
\label{sec:with_selfdiff}

In this section, we briefly discuss the changes that are induced by taking non-zero self-diffusion coefficients $d_{11}$ and $d_{22}$. Taking them into account in the computations made in Section~\ref{sec:without_selfdiff}, the characteristic matrix becomes
$$\tilde M_k^*=J_*-\tilde J_\Delta^*\lambda_k=
\begin{pmatrix}
-a_1u_*-(d+d_{12}v_*+2d_{11}u_*)\lambda_k   &  -b_1u_*-d_{12}u_*\lambda_k\\
-b_2v_* -d_{21}v_*\lambda_k         & -a_2v_*-(d+d_{21}u_*+2d_{22}v_*)\lambda_k
\end{pmatrix}.$$ 
It still has a negative trace, and its determinant is
\begin{equation}
\tilde P_k(d):= \det \tilde M_k = \tilde A_k d^2 + \tilde B_k d + \tilde C_k,
\label{eq:det_self}
\end{equation}
where
\begin{align*}
\tilde A_k = \lambda_k^2, \quad \tilde B_k = 2(d_{11}u_*+d_{22}v_*)\lambda_k^2+d_{12}v_*\lambda_k^2+d_{21}u_*\lambda_k^2 -\tr J_* \lambda_k,
\end{align*}
and
\begin{equation*}
\tilde C_k = -d_{12}(\alpha -2d_{22}v_*^2\lambda_k)\lambda_k- d_{21}(\beta -2d_{11}u_*^2\lambda_k)\lambda_k+2u_*v_*\lambda_k((d_{11}a_2+d_{22}a_1)+2d_{11}d_{22}\lambda_k)+\det J_*.
\end{equation*}
Again, we get a bifurcation for the $k$-th mode if and only if $\tilde C_k<0$. Therefore, adding self-diffusion can only hinder the appearance of non-homogeneous steady states, because $\tilde C_k$ increases with $d_{11}$ and $d_{22}$. Even for small (but non zero) values of the self-diffusion coefficients $d_{11}$ and $d_{22}$, we see a main difference with some of the situations described in Section~\ref{sec:without_selfdiff}, namely that we can only have a finite number of bifurcations from $(u_*,v_*)$. Indeed, as soon as 
\begin{equation}\label{eq:self_bound}
\lambda_k \geq \max\left(\frac{\beta}{2d_{11}u_*^2},\frac{\alpha}{2d_{22}v_*^2}\right),
\end{equation}
there is no bifurcation associated to the $k$-th mode. 

The \texttt{pde2path} setup requires slight and straightforward modifications (see \cite{CKCS} for the code setup) to implement self-diffusion terms. In Figure \ref{BD_self} we show how the bifurcation structure behaves when one self-diffusion coefficient increases. As in the previous sections, the first, second and third branches are denoted in blue, red and green, respectively. As in Figure \ref{v0-L2}, the magenta and the orange branches correspond to secondary bifurcations.
We consider the first parameter set of Table \ref{tab:param}, with $d_{12}=3$ and $d_{21}=0$, fix $d_{11}=0$ and vary $d_{22}$. The bifurcation diagram corresponding to $d_{22}=0$ is obviously the usual one, already shown in Figure \ref{v0-L2}, while with non-zero values the successive disappearance of primary bifurcations points is confirmed. Figure \ref{BD_self_1} is obtained with $d_{22}=0.03$: the bifurcation diagram seems shifted to the left and stretched, and only four primary bifurcation points are detected for positive values of $d$. When $d_{22}=0.05$ (Figure \ref{BD_self_1}) we observe only two primary bifurcation points, and on the first (blue) branch no unstable regions have been detected. Finally in Figure \ref{BD_self_2} the second bifurcation point is no longer present. 

Moreover, in this case the numerical computation of the branches does not present criticalities approaching the value $d=0$, but instead the bifurcation diagram exists for negative value of the standard diffusion coefficients and solutions are still meaningful. Looking at system \eqref{cross} with $d_{11}=d_{21}=0$ and $d_{12},\;d_{22}>0$, negative values of $d$ can still yield an elliptic operator. In general, considering expressions \eqref{eq:det_self} and \eqref{detM*d} of the determinants, we have that 
\begin{equation*}
\tilde{P}_k(d)=P_k(d-d_s)+p_s,
\end{equation*}
where $d_s$ and $p_s$ depend on the parameters and on $\lambda_k$. 

\begin{figure}[!ht]
\centering
\subfloat[\label{BD_self_1}$d_{11}=0,\; d_{22}=0.03$]{
\begin{overpic}[width=0.5\textwidth,tics=10,trim={3cm 8.5cm 3cm 9cm},clip]{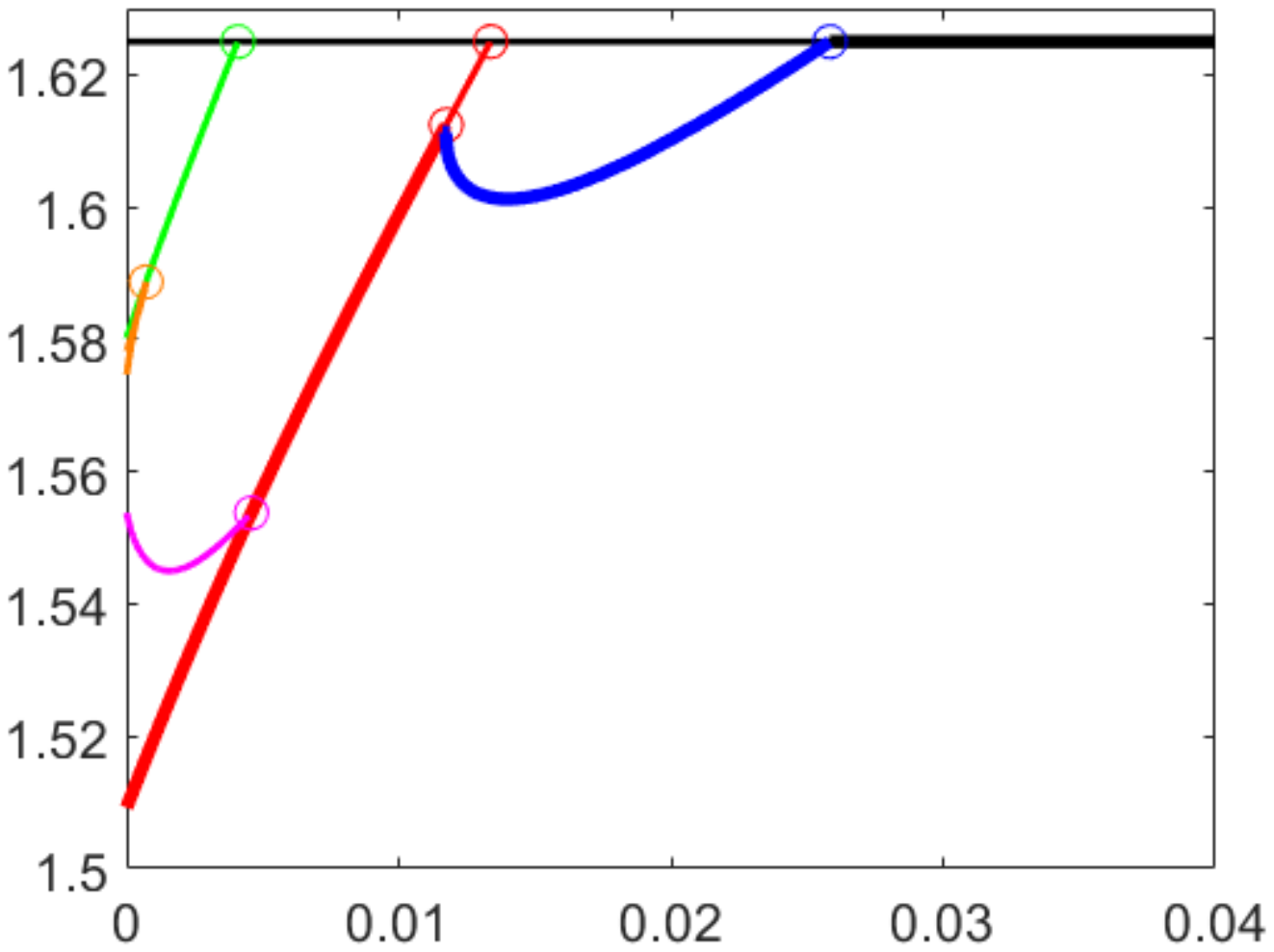}
\put(90,10){$d$}
\put(0,28){\rotatebox{90}{$||u||_{L_2}$}}
\end{overpic} 
}\\
\subfloat[\label{BD_self_2}$d_{11}=0,\; d_{22}=0.05$]{
\begin{overpic}[width=0.5\textwidth,tics=10,trim={3cm  8.5cm 3cm 9cm},clip]{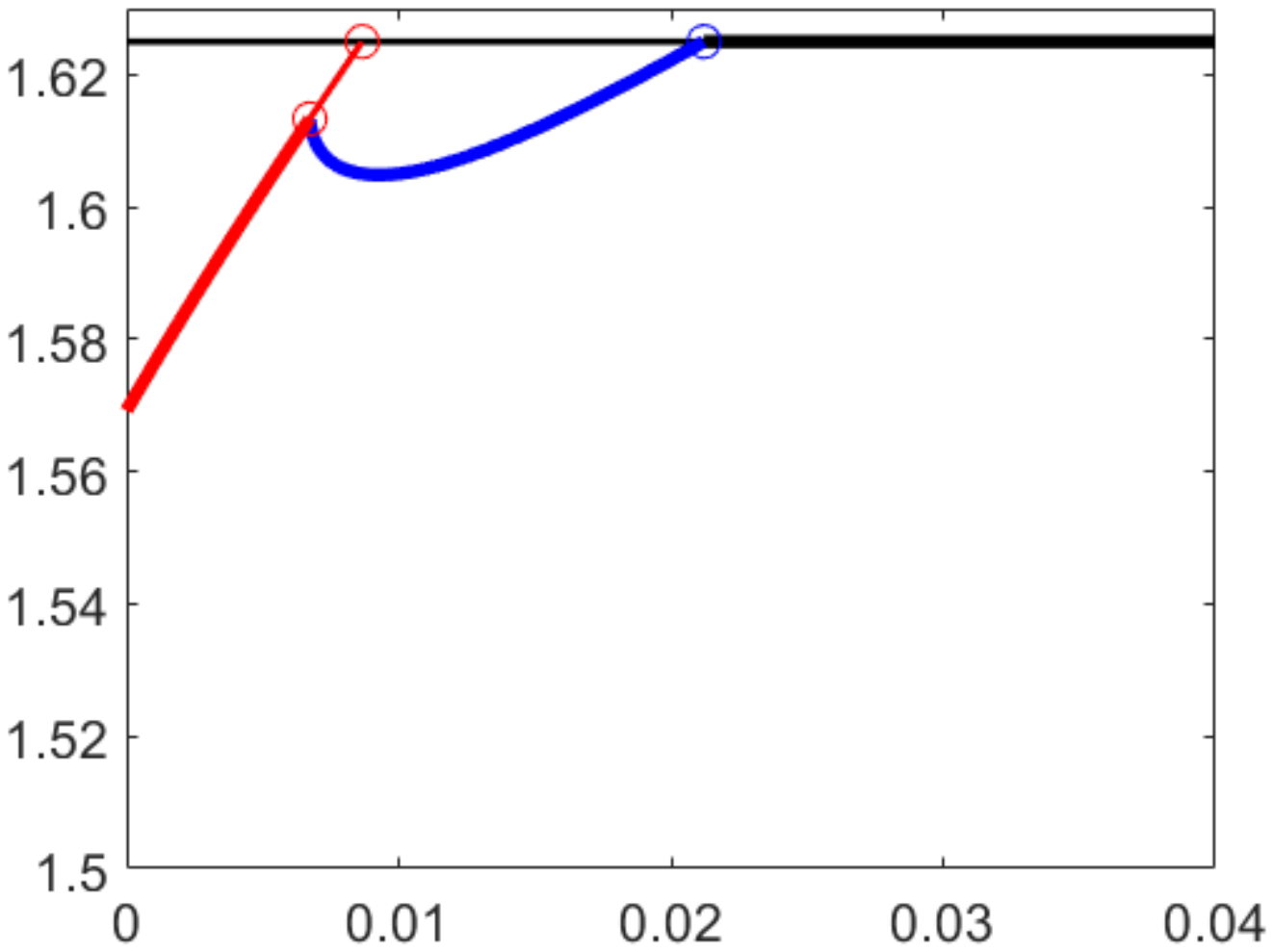}
\put(90,10){$d$}
\put(2,28){\rotatebox{90}{$||u||_{L_2}$}}
\end{overpic} 
}
\subfloat[\label{BD_self_3}$d_{11}=0,\; d_{22}=0.1$]{
\begin{overpic}[width=0.5\textwidth,tics=10,trim={3cm  8.5cm 3cm 9cm},clip]{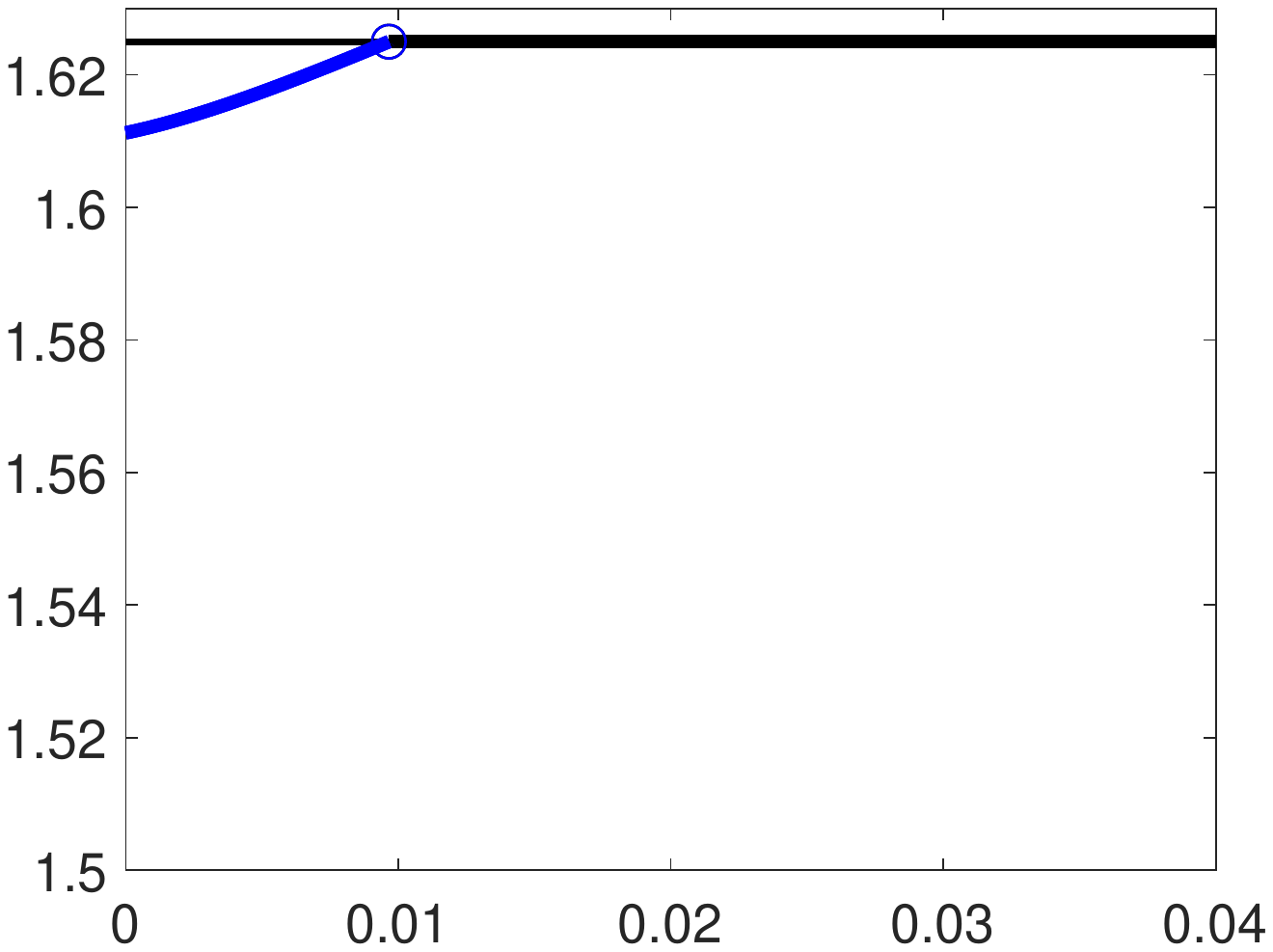}
\put(90,10){$d$}
\put(2,28){\rotatebox{90}{$||u||_{L_2}$}}
\end{overpic} 
}
\caption{Bifurcation diagrams with bifurcation parameter $d$ obtained with the first parameter set in Table \ref{tab:param} (weak competition case), with $d_{12}=3$, $d_{21}=0$, $d_{11}=0$ and different values of the self-diffusion coefficient $d_{22}$.}
\label{BD_self}
\end{figure}

\section{Concluding remarks}\label{sec:concl}

In this work we have extensively analyzed the cross-diffusion SKT model by detailed local linearized analysis and global numerical continuation. This approach has revealed some interesting effects of the cross-diffusion terms on the local and global steady states, and the results can also be interpreted in the original context of species competition. 

First of all, we proved that the intraspecific competition pressure (self-diffusion) contrasts the spatial segregation.
The effect of interspecific competition pressure (cross-diffusion) is more intricate instead. If the competition for the resources is weak, meaning that the two populations could coexist, the habitat segregation of too similar species is not feasible even in the presence of interspecific pressure. On the contrary, it is more likely that distinguishable populations exhibit habitat segregation. In this case the relative competition pressures play an important role on the outcomes. They non-simultaneously help to destabilize the homogeneous equilibrium. When the competition is strong and the exclusion principle is the predicted scenario, the competition pressure helps the species to coexist, segregated in different habitat regions through bifurcations of the homogeneous steady state.  
Moreover, there is a relation between the decreasing of the standard diffusion coefficient $d$ and the type of pattern: when the mobility of the individuals is reduced, so there is less mixing in the system, we observe the formation of multiple clusters.

Furthermore, the model often shows multistability of solutions, and in particular stable inhomogenous solution can coexist with the homogeneous one. The outcome of the system depends on the initial conditions and it is possible to pass from one to the other perturbing the system.

\medskip
Due to its innovative interplay between linearized analysis and numerical continuation, this work highlights new interesting aspects and opens several different questions that can be addressed in future works. From the theoretical point of view, the analytical characterization of the first bifurcation point (sub or supercritical) is an interesting open question. From the numerical study we found the appearance of Hopf bifurcation points in both weak and strong competition regimes, suggesting the formation of time-periodic spacial patterns. The effective presence of these time-varying patterns, their type and stability properties are biologically relevant, as well as the influence of the cross-diffusion terms on the Hopf point. The linearized analysis and the numerical continuation also reveal the possible presence of higher co-dimension points. Even though their detection with the continuation software \texttt{pde2path} is not possible yet, it would be interesting to better characterize their presence and their impact on the global bifurcation diagram and on the stability of steady states. In addition, we focused on bifurcation curves connected to the homogeneous branch, while it could also be interesting to study isolated bifurcation curves (``isolas''), which may be found using \texttt{pde2path} in combination with multiple solution methods~\cite{KuehnEllipticCont}.

Finally, the same study could be carried out for other quasilinear problems involving cross-diffusion terms. For instance, in the context of predator--prey systems, it is possible to derive by time-scale arguments another type of cross-diffusion terms \cite{conforto2018,desvillettes2018}. The linearized analysis suggests that they do not increase the parameter region in which patterns appear, but as in the present work, the global influence cannot be captured only by the linearized analysis. Taken together, these results will better clarify the role of cross-diffusion terms as the key ingredients in pattern formation.

\section*{Acknowledgements}
MB and CK have been supported by a Lichtenberg Professorship of the VolkswagenStiftung. CS has received funding from the European Union's Horizon 2020 research and innovation programme under the Marie Sk\l odowska--Curie grant agreement No. 754462. Support by INdAM-GNFM is gratefully acknowledged by CS. 

\bibliographystyle{plain}
\bibliography{bibliography}

\end{document}